\documentclass[11pt]{amsart}
\usepackage[margin=1in]{geometry}
\usepackage{bbm}
\usepackage{float, graphicx}
\usepackage[]{epsfig}
\usepackage{amsmath, amsthm, amssymb}
\usepackage{yhmath}
\usepackage{epsfig}
\usepackage{verbatim}
\usepackage{multicol}
\usepackage{url}
\usepackage{aliascnt}
\usepackage{latexsym}
\usepackage{mathrsfs}
\usepackage[colorlinks, bookmarks=true]{hyperref}
\usepackage{graphicx}
\usepackage{subcaption}
\usepackage{amsmath}
\usepackage{enumerate}
\usepackage[normalem]{ulem}
\usepackage{bm}
\usepackage{dsfont}
\usepackage{stmaryrd}
\usepackage{enumitem}
\usepackage{soul}
\usepackage[noadjust]{cite}
\usepackage{color}
\usepackage{xcolor}
\usepackage{hyperref}
\usepackage{indentfirst}
\usepackage[capitalise]{cleveref}

\numberwithin{equation}{section}
\usepackage{mathtools}
\newcommand{\trace}{\mathrm{tr}}

\newcommand{\cX}{{\mathcal X}}
\newcommand{\cY}{{\mathcal Y}}
\newcommand{\cT}{{\mathcal T}}
\newcommand{\cA}{{\mathcal A}}

\newcommand{\cD}{{\mathcal D}}
\newcommand{\cG}{{\mathcal G}}

\newcommand{\bN}{{\mathbb{N}}}
\newcommand{\bR}{{\mathbb R}}
\newcommand{\bS}{{\mathbb S}}

\newcommand{\bZ}{{\mathbb Z}}

\def\<{\langle}
\def\>{\rangle}

\def\cT{\mathcal{T}}
\def\cV{\mathcal{V}}

\def\cG{\mathcal{G}}

\def\cA{\mathcal{A}}
\def\cH{\mathcal{H}}
\def\cQ{\mathcal{Q}}

\def\Lamp[#1]{\boldsymbol{\Lambda}_{\mathrm{AMP}}^{(#1)}}
\def\lalg[#1]{\Lambda_{\mathrm{alg}, #1}}

\newcommand{\proj}{\mathbf{P}}
\newcommand{\bfS}{\mathbf{S}}

\def\cR{\mathcal{R}}
\def\cP{\mathcal{P}}
\def\cK{\mathcal{K}}

\def\supp{{\rm supp}}

\def\de{{\rm d}}

\def\BB{\mathbb{B}}

\def\TT{\mathbb{T}}

\def\calZ{\mathcal{Z}}

\newcommand{\RN}[1]{%
  \textup{\uppercase\expandafter{\romannumeral#1}}%
}

\newcommand{\RNum}[1]{\uppercase\expandafter{\romannumeral #1\relax}}

\newcommand{\Vol}{\operatorname{Vol}}
\newcommand{\Ent}{\operatorname{Ent}}
\newcommand{\dive}{\operatorname{div}}
\newcommand{\gradW}{\nabla\mkern-10mu\nabla}
\newcommand{\bW}{\operatorname{W}}

\theoremstyle{plain} 
\newtheorem{theorem}{Theorem}[section]
\newtheorem*{theorem*}{Theorem}

\newaliascnt{proposition}{theorem}
\newtheorem{proposition}[proposition]{Proposition}
\aliascntresetthe{proposition}
\crefname{proposition}{Proposition}{Propositions}
\Crefname{proposition}{Proposition}{Propositions}

\newaliascnt{lemma}{theorem}
\newtheorem{lemma}[lemma]{Lemma}
\aliascntresetthe{lemma}
\crefname{lemma}{Lemma}{Lemmas}
\Crefname{lemma}{Lemma}{Lemmas}

\newaliascnt{corollary}{theorem}
\newtheorem{corollary}[corollary]{Corollary}
\aliascntresetthe{corollary}
\crefname{corollary}{Corollary}{Corollaries}
\Crefname{corollary}{Corollary}{Corollaries}

\newaliascnt{definition}{theorem}

\aliascntresetthe{definition}
\crefname{definition}{Definition}{Definitions}
\Crefname{definition}{Definition}{Definitions}

\newaliascnt{remark}{theorem}

\aliascntresetthe{remark}
\crefname{remark}{Remark}{Remarks}
\Crefname{remark}{Remark}{Remarks}

\newaliascnt{example}{theorem}

\aliascntresetthe{example}
\crefname{example}{Example}{Examples}
\Crefname{example}{Example}{Examples}

\newaliascnt{problem}{theorem}

\aliascntresetthe{problem}
\crefname{problem}{Problem}{Problems}
\Crefname{problem}{Problem}{Problems}

\newaliascnt{assumption}{theorem}
\newtheorem{assumption}[assumption]{Assumption}
\aliascntresetthe{assumption}
\crefname{assumption}{Assumption}{Assumptions}
\Crefname{assumption}{Assumption}{Assumptions}

\crefname{claim}{Claim}{Claims}
\Crefname{claim}{Claim}{Claims}

\crefname{experiment}{Experiment}{Experiments}
\Crefname{experiment}{Experiment}{Experiments}

\title[Sharp Convergence of Wasserstein Gradient Flows for Spectrally Nonnegative Interaction Energies]{Sharp Convergence of Wasserstein Gradient Flows for Spectrally Nonnegative Interaction Energies}

\author[Z. Lin]{Zhengjiang Lin}
\address{(ZL) Department of Mathematics, Massachusetts Institute of Technology, 77 Massachusetts Ave, 02139 Cambridge MA, USA} 
\email{linzj@mit.edu}

\author[P. Rigollet]{Philippe Rigollet}
\address{(PR) Department of Mathematics, Massachusetts Institute of Technology, 77 Massachusetts Ave, 02139 Cambridge MA, USA} 
\email{rigollet@math.mit.edu}

\begin{document}

\begin{abstract}
    We study the long-time behavior of Wasserstein gradient flows for interaction energies
\[
\mathsf E[\mu]
=
\frac12\iint_{M\times M}K(x,y)\,\mathrm d\mu(x)\,\mathrm d\mu(y)
\]
on a closed manifold $M$. For kernels diagonal in a Laplace eigenbasis with nonnegative spectral coefficients, we prove a differential inequality relating the relative entropy to the energy gap. Consequently, for any nonnegative initial density $u_0\in L^p(M)$, $p>1$, the energy gap is integrable in time and satisfies
\[
\mathsf E[\mu_t]-\mathsf E_{\min}=o(t^{-1}).
\]
If all spectral coefficients are positive, the flow converges weakly to the constant measure. These
interaction energies need not be geodesically convex in Wasserstein space, and the associated flows
contain no diffusion; their global convergence therefore does not follow from standard Wasserstein
gradient flow theory. The kernels covered by our results include zonal kernels on spheres, kernels
arising in transformer models, regularized Riesz kernels, and inverse fractional Laplacian kernels.

We also investigate the sharpness of the $o(t^{-1})$ rate. For any smooth kernel in this class with infinitely many positive spectral coefficients and any $\delta>0$, we construct a solution of the linearized flow whose energy is comparable to $t^{-1-\delta}$ along a sequence of times tending to infinity. Moreover, for any $\delta>0$, by choosing a suitable inverse fractional Laplacian kernel
on the flat torus, we construct an exact solution of the nonlinear Wasserstein gradient flow whose
energy is comparable to $t^{-1-\delta}$. The nonlinear construction is based on uniform-in-time estimates
for the evolution of the dyadic Fourier coefficient blocks and a blockwise energy-persistence argument. These estimates also yield a uniform-in-time quantitative comparison between the nonlinear
Wasserstein gradient flow and its linearization.
\end{abstract}

\maketitle


\section{Introduction}

Energy optimization over probability measures is a classical problem in potential theory and discrete geometry. Let $M$ be a closed Riemannian manifold of dimension $d\geq1$, with metric $\langle\cdot,\cdot\rangle$ and volume measure $\de x$, and let $\cP(M)$ denote the set of Borel probability measures on $M$. Given a continuous symmetric kernel $K\colon M\times M\to\bR$, one seeks the measures that minimize or maximize the interaction energy
\begin{align}\label{e:energy}
    \mathsf{E}[\mu]
    \coloneqq
    \frac{1}{2}\iint_{M\times M}K(x,y)\,\de\mu(x)\,\de\mu(y),
    \qquad \mu\in\cP(M).
\end{align}
At the discrete level, taking $\mu$ to be an empirical measure of the form
$N^{-1}\sum_{i=1}^N\delta_{x_i}$ yields the corresponding $N$-point energy and turns the continuum variational problem into the search for well-distributed point configurations. This framework encompasses several classical problems, including the Thomson problem of minimizing the Coulomb energy of $N$ electrons on $\bS^2$ \cite{thomson1904structure}, Smale's seventh problem concerning the efficient construction of nearly optimal logarithmic-energy configurations on $\bS^2$ \cite{smale1998mathematical}, and the broader study of minimal Riesz-energy configurations and Fekete points \cite{hardin2005minimal}. Originating in electrostatics and potential theory, these problems are
now closely connected with approximation theory, discrepancy theory,
and the discretization of manifolds. Global optimizers have been
identified for several important families of such energies, including
distance and inner-product energies on spheres
\cite{bjorck1956distributions,tan2017energy,bilyk2019geodesic}. In the
setting considered here, nonnegativity of the spectral
coefficients in \Cref{a:kernel} implies that the uniform measure is a
global minimizer; see \eqref{e:energy expansion}.

Once a minimizer has been identified, a natural way to seek it
dynamically is to consider the steepest descent of $\mathsf{E}[\mu]$ on
the space of probability measures endowed with the Wasserstein metric. This leads to Wasserstein gradient flows, which provide a framework for
both nonlinear evolution equations and optimization over probability
measures; see
\cite{ambrosio2008gradient,jordan1998variational,otto2001geometry}
for further background. Their long-time behavior is well understood when
$\mathsf{E}[\mu]$ is convex along Wasserstein geodesics. In the absence of
geodesic convexity, however, even qualitative convergence to a minimizer
eludes standard gradient flow theory. This difficulty is
especially pronounced for the nondiffusive equation studied here: the
evolution is a pure transport flow, and the energy monotonicity identity
may fail to provide coercive control in regions where the density
vanishes. These are the central questions addressed in this paper.

To formulate the problem precisely, we first observe that the first variation of \eqref{e:energy} is given by
\begin{align}\label{e:Ku}
    \frac{\delta\mathsf{E}[\mu]}{\delta\mu}(x)
    =
    \cK[\mu](x)
    \coloneqq
    \int_M K(x,y)\,\de\mu(y).
\end{align}
For a function $u$ on $M$, we use the same symbols $\mathsf{E}[u]$ and $\cK[u]$, with $\de\mu$ replaced by $u\,\de x$. We also write $u\in\cP(M)$ if $u\,\de x\in\cP(M)$. Since $M$ is compact and $K$ is continuous, $\mathsf E$ attains its minimum on $\cP(M)$, which we denote by
\begin{align}\label{e:energy minimum}
    \mathsf{E}_{\min}
    \coloneqq
    \min_{\mu\in\cP(M)}\mathsf{E}[\mu].
\end{align}
The descending Wasserstein gradient flow of \eqref{e:energy} is formally given by the continuity equation
\begin{align}\label{e:wass grad flow}
    \partial_t\mu_t
    -
    \dive\bigl(\mu_t\gradW\mathsf{E}[\mu_t]\bigr)
    =
    0,
    \qquad
    \mu_0\in\cP(M),
\end{align}
where the driving vector field is the \emph{Wasserstein gradient}
\begin{align}\label{e:wass gradient}
    \gradW\mathsf{E}[\mu](x)
    \coloneqq
    \nabla\frac{\delta\mathsf{E}[\mu]}{\delta\mu}(x)
    =
    \nabla\cK[\mu](x).
\end{align}
Here $\nabla f$ denotes the Riemannian gradient of a function $f$, and $\dive\coloneqq\trace\nabla$ denotes the divergence operator. Equation~\eqref{e:wass grad flow} belongs to the class of nonlocal interaction equations arising in aggregation dynamics \cite{carrillo2011global}. 

Along every sufficiently regular solution, a direct computation gives
\begin{align}\label{e:mono energy}
    \frac{\de}{\de t}\mathsf{E}[\mu_t]
    =
    -
    \int_M
    \left|
        \gradW\mathsf{E}[\mu_t](x)
    \right|^2
    \,\de\mu_t(x)
    \leq 0,
\end{align}
so $\mathsf E$ is a natural Lyapunov functional. Nevertheless,
\eqref{e:mono energy} alone generally neither guarantees convergence
of $\mu_t$ to an energy minimizer nor provides a quantitative decay
rate for the gap $\mathsf E[\mu_t]-\mathsf E_{\min}$, even when the
minimizer is known explicitly. Besides the lack of geodesic convexity
of $\mathsf E$ in Wasserstein space, a central difficulty is that the
dissipation is weighted by the evolving measure $\mu_t$: it provides
no uniform, unweighted control of $\nabla\cK[\mu_t]$ where the density
is small or vanishes. When the density $u_t$ is bounded below by a
positive constant uniformly in space and time, this degeneracy
disappears, and established energy--dissipation arguments become
available. For kernels with suitable spectral structure, the resulting
control of $\|\nabla\cK[\mu_t]\|_{L^2(M)}$ can be combined with
interpolation and uniform regularity estimates to close a differential
inequality for the energy gap; see, for example,
\cite[Section~3.2]{chizat2026quantitative}.

We use this mechanism for the upper bound in the separate sharpness
construction of \Cref{thm:wass -1-delta rate intro}. There, uniform
perturbative estimates ensure that the constructed solution satisfies
$u_t\ge1/2$ for all $t\ge0$, and
\Cref{lem:negative Sobolev Besov interpolation} then yields
\eqref{e:diff ineql E ft intro}. The purpose of that construction,
however, is to exhibit a particular solution with matching upper and
lower decay bounds, not to establish convergence for general initial
data. Our global convergence theorem makes no positive lower-bound
or smallness assumption on the initial density.

Nor can such a lower bound be recovered here by simply allowing the
flow to evolve for a short time. For uniformly parabolic
drift--diffusion equations, diffusion instantaneously spreads mass
throughout a connected domain, and quantitative heat-kernel estimates
can provide positive lower bounds uniform in space and time after a
positive waiting time in compact settings with uniformly bounded
drift~\cite{Aronson1968,chizat2026convergence}. These bounds can, in turn, make
energy--dissipation arguments applicable even when the initial density
vanishes. By contrast, \eqref{e:wass grad flow} contains no diffusion:
vacuum regions are transported by the characteristic flow and persist
at every finite time. Establishing global convergence therefore
requires a mechanism that remains effective in the presence of this
degeneracy.

Our first main discovery is that entropy serves as a second Lyapunov
functional, even though it is absent from the energy \eqref{e:energy}
and the flow \eqref{e:wass grad flow} contains no diffusion. Its
dissipation arises instead from the spectral compatibility between the kernel
$K$ and the Laplace--Beltrami operator. More specifically, the entropy
satisfies a differential inequality that controls the full
interaction-energy gap, including the component not controlled by
\eqref{e:mono energy}. The resulting estimate is global, requires no
positive lower bound on the density, and yields
$\mathsf E[\mu_t]-\mathsf E_{\min}=o(t^{-1})$. Thus, whenever the minimizer is unique, the solution converges weakly
to the uniform measure even if the initial density vanishes on a region
of arbitrarily large volume. This convergence to a fully supported
limit occurs through transport alone: measure-zero regions are carried by
the flow and persist at every finite time.
\Cref{fig:exponential kernel wass flow} illustrates this relaxation
from an initial density that is not strictly positive.

\begin{figure}[t]
    \centering
    \begin{subfigure}[t]{0.2\textwidth}
        \centering
        \includegraphics[width=\linewidth]
        {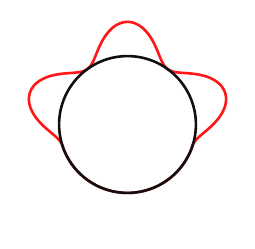}
        \caption{$t=0$}
    \end{subfigure}
    \hfill
    \begin{subfigure}[t]{0.2\textwidth}
        \centering
        \includegraphics[width=\linewidth]
        {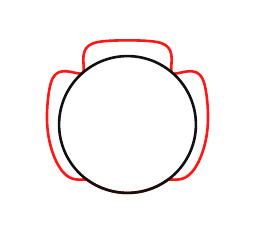}
        \caption{$t=0.05$}
    \end{subfigure}
    \hfill
    \begin{subfigure}[t]{0.2\textwidth}
        \centering
        \includegraphics[width=\linewidth]
        {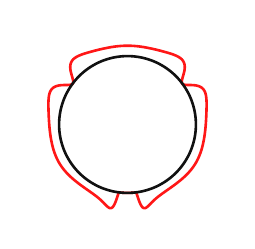}
        \caption{$t=0.2$}
    \end{subfigure}
    \hfill
    \begin{subfigure}[t]{0.2\textwidth}
        \centering
        \includegraphics[width=\linewidth]
        {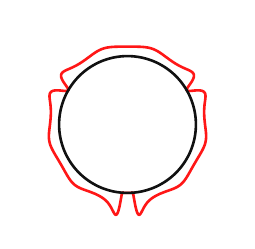}
        \caption{$t=20$}
    \end{subfigure}
    \caption{
        Evolution of the Wasserstein gradient flow
        \eqref{e:wass grad flow} on $M=\bS^1$ for the exponential kernel
        $K(x,y)=e^{\beta(x\cdot y)}$ with $\beta=4$, as introduced in
        \Cref{s:sphere kernel}. The red curves show the density $u_t$ on the unit circle (black). The displayed times are $t=0,\,0.05,\,0.2,\,20$, and the corresponding energy gaps $\mathsf E[u_t]-\mathsf E_{\min}$ are $3.3$, $1.3$, $6.4\times10^{-2}$, $1.8\times10^{-7}$. Relative to the normalized volume measure on $\bS^1$, the initial density is $u_0(\theta) = \frac{5}{3}\bigl(1+\sin(5\theta)\bigr) \mathbf{1}_{[-\pi/10, \,11\pi/10]}(\theta)$. In particular, $u_0$ vanishes on the open arc $(11\pi/10,19\pi/10)$. Nevertheless, $u_t$ approaches the uniform equilibrium $\sigma = 1$. 
    }
    \label{fig:exponential kernel wass flow}
\end{figure}

Our second main contribution concerns the quantitative long-time behavior
of the nondiffusive flow \eqref{e:wass grad flow}. For inverse fractional
Laplacian kernels on the flat torus, we establish uniform-in-time Besov-norms estimates for the nonlinear perturbation and quantitative lower bounds
for its dyadic Fourier coefficient blocks up to their corresponding linear decay time
scales. By distributing the initial energy across these blocks and
combining their persistence with an interpolation inequality, we
construct, for every $\delta>0$, an exact nonlinear solution of \eqref{e:wass grad flow} whose energy
has the two-sided decay rate $t^{-1-\delta}$. The same estimates also
yield a uniform-in-time quantitative comparison between the nonlinear
flow \eqref{e:wass grad flow} and its linearization.

Our structural assumption is that the kernel is simultaneously
diagonalizable with the Laplacian operator and has nonnegative
coefficients on all nonconstant modes.

\begin{assumption}[Spectral compatibility and nonnegativity]\label{a:kernel}
The kernel $K$ satisfies the following conditions.
\begin{enumerate}
    \item \textbf{Regularity and symmetry.}
    The kernel $K$ belongs to $C^2(M\times M)$ and is symmetric:
    $K(x,y)=K(y,x)$ for all $x,y\in M$.

    \item \textbf{Spectral compatibility.}
    There exists a real orthonormal basis $\{\phi_n\}_{n\ge0}$ of
    $L^2(M)$ consisting of eigenfunctions of $-\Delta$,
    \[
        -\Delta\phi_n=\lambda_n\phi_n,
        \qquad
        0=\lambda_0<\lambda_1\le\lambda_2\le\cdots,
    \]
    in which $K$ admits the diagonal expansion
    \[
        K(x,y)=\sum_{n=0}^{\infty}a_n\phi_n(x)\phi_n(y).
    \]

    \item \textbf{Spectral nonnegativity.}
    The coefficients on all nonconstant modes are nonnegative:
    \[
        a_n\ge0 \qquad \text{for every } n\ge1.
    \]
\end{enumerate}
\end{assumption}


The constant-mode coefficient $a_0$ clearly plays no role in the dynamics \eqref{e:wass grad flow}. Moreover, \Cref{a:kernel} alone does not imply the convexity $\frac{\de^2}{\de t^2}\mathsf{E}[\mu_t]\geq 0$. The assumption encompasses smooth zonal kernels on spheres, regularizations of singular Riesz kernels, and smooth kernels associated with inverse fractional powers of the Laplace--Beltrami operator. These examples and their applications are discussed in \Cref{s:kernel examples}.

Throughout the paper, we work under \Cref{a:kernel}, and we also assume that 
    \begin{align}\label{a:normalized uniform measure}
        1=\Vol(M)=\int_M1\,\de x,
    \end{align}
so that $\sigma\coloneqq1\de x\in\cP(M)$. We call $\sigma$ the uniform measure on $M$. Without this normalization assumption, all the results of this paper remain valid after a constant rescaling of the metric on $M$. We denote by $\|\cdot\|_p$ the $L^p$ norm with respect to the volume measure. Under \eqref{a:normalized uniform measure}, we have $\phi_0=1$. For $\mu\in\cP(M)$, we denote $b_n\coloneqq\int_M\phi_n\,\de\mu$. Then $b_0=1$, and
\begin{align}\label{e:energy expansion}
    \mathsf{E}[\mu]
    &=
    \frac{1}{2}\sum_{n=0}^{\infty}a_n b_n^2
    \geq
    \frac{a_0b_0^2}{2}
    =
    \frac{a_0}{2}
    =
    \mathsf{E}_{\min}.
\end{align}
In particular, $\sigma$ is an energy minimizer. Equivalently,
\begin{align*}
    \mathsf E[\mu]-\mathsf E_{\min}
    =
    \frac12\iint_{M\times M}
    K(x,y)\,\de(\mu-\sigma)(x)\,\de(\mu-\sigma)(y).
\end{align*}
This energy gap is the modulated interaction energy of $\mu$ relative
to $\sigma$ \cite{serfaty2026lectures}, and is also known as the
maximum mean discrepancy (MMD) between $\mu$ and $\sigma$ in the
kernel-methods literature
\cite{gretton2012kernel,li2015generative,li2017mmd}. In the
uniform-target setting considered here, \eqref{e:wass grad flow} is
therefore also the Wasserstein gradient flow of this modulated energy. The study of nonlocal interaction energies and their Wasserstein gradient-flow structure has a long history, including
substantial work on singular Coulomb and Riesz interactions; see, for
example,
\cite{mccann1997convexity,carrillo2003kinetic,carrillo2006contractions,
bertozzi2007finite,bertozzi2009blowup,carrillo2011global,
serfaty2020meanfield,lin2024nonlocal,boufadene2025global,
de2026wasserstein,chizat2026quantitative,serfaty2026lectures,rosenzweig2026wasserstein}.
Wasserstein gradient flows of MMD functionals have also recently
attracted considerable attention in machine learning as particle-based
methods for distribution matching, neural generative modeling, and
posterior sampling
\cite{arbel2019maximum,altekruger2023neural,hertrich2024generative,
hagemann2024posterior,galashov2025deep}.

\subsection{Main results}

The principal contributions of this paper are the following.

\begin{enumerate}
    \item In \Cref{thm: intro global convergence,thm: intro mono entropy}, we establish an entropy inequality for the pure Wasserstein
    transport flow \eqref{e:wass grad flow}. For every nonnegative initial
    density $u_0\in L^p(M)$ with $p>1$, this inequality yields global
    energy convergence at the universal rate $o(t^{-1})$, without
    requiring strict positivity, smallness, or high regularity. If the
    uniform measure is the unique minimizer, it further yields weak
    convergence of $\mu_t$ to the uniform measure. The inequality applies
    to every kernel satisfying \Cref{a:kernel}, including all the examples
    presented in \Cref{s:kernel examples}, and requires neither a maximum
    principle nor a kernel-specific formula.

    \item In \Cref{thm:linear-sharp-delta subseq intro,thm:linear-sharp-delta intro}, we establish the sharpness of the $o(t^{-1})$-rate at the level
    of the linearized equation. For any kernel with infinitely many
    positive modes in \Cref{a:kernel}, we construct a linearized solution whose energy decay
    is arbitrarily close to $t^{-1}$ along a sequence of times. We also
    construct a smooth kernel for which, for every $\delta>0$, a
    linearized solution satisfies the two-sided estimate 
    $\mathsf E[\overline f_t]\asymp t^{-1-\delta}$.
    Finally, we identify an obstruction to such global-in-time
    two-sided polynomial asymptotics for general kernels.

    \item In \Cref{thm:wass -1-delta rate intro}, for inverse fractional Laplacian kernels on the torus, we
     produce an exact nonlinear solution of \eqref{e:wass grad flow} satisfying the two-sided energy law
    $\mathsf{E}[u_t] -\mathsf E_{\min}\asymp t^{-1-\delta}$ through a suitable
    choice of the kernel and initial datum.
\end{enumerate}

The first contribution is developed in
\Cref{s:main results convergence rate}; the sharpness results and the
uniform-in-time nonlinear perturbation theory are presented in
\Cref{s:sharp convergence result}.

\subsubsection{Global convergence and entropy}\label{s:main results convergence rate}

\begin{theorem}\label{thm: intro global convergence}
    For any $p> 1$ and any $u_0 \in L^p(M) \cap \cP(M)$, let $\mu_t = u_t(x) \de x$ solve \eqref{e:wass grad flow} starting from $\mu_0 = u_0(x) \de x$. Then,
        \begin{align}\label{e:energy integrability}
            \int_0 ^{\infty} \left(\mathsf{E}[\mu_t] - \mathsf{E}_{\min} \right) \, \de t < \infty.
        \end{align}
    In particular, as $t \to \infty$,
        \begin{align}\label{e:energy convergence}
         \mathsf{E}[\mu_t] - \mathsf{E}_{\min} = o(t^{-1}).
        \end{align}
    Furthermore, every weak limit of $\mu_t$ must be a minimizer of $\mathsf{E}[\cdot]$, and if $\sigma \in \cP(M)$ is the unique minimizer of $\mathsf{E}[\cdot]$, then
        \begin{align}\label{e:global weak convergence}
            \lim_{t \to \infty} \bW_2(\mu_t , \sigma) = 0,
        \end{align}
    where $\bW_2(\cdot,\cdot)$ denotes the Wasserstein-2 distance.
\end{theorem}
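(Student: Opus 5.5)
The plan is to show that the Boltzmann entropy $\Ent(\mu)\coloneqq\int_M u\log u\,\de x$ (relative to $\sigma$, since $\Vol(M)=1$) is a Lyapunov functional whose dissipation dominates the energy gap. For a smooth, strictly positive solution I would compute, using \eqref{e:wass grad flow} and one integration by parts,
\begin{align*}
    \frac{\de}{\de t}\int_M u_t\log u_t\,\de x
    =-\int_M \nabla u_t\cdot\nabla\cK[u_t]\,\de x
    =\int_M u_t\,\Delta\cK[u_t]\,\de x .
\end{align*}
Writing $b_n(t)=\int_M\phi_n u_t\,\de x$ and using the diagonal expansion in \Cref{a:kernel}, we have $\Delta\cK[u_t]=-\sum_n\lambda_n a_n b_n\phi_n$. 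Hence the right-hand side equals $-\sum_{n\ge1}\lambda_n a_n b_n^2$. Since $a_n\ge0$ and $\lambda_n\ge\lambda_1>0$ for $n\ge1$, together with \eqref{e:energy expansion}, this gives
\begin{align*}
    \frac{\de}{\de t}\Ent(\mu_t)\le-\lambda_1\sum_{n\ge1}a_nb_n^2=-2\lambda_1\bigl(\mathsf E[\mu_t]-\mathsf E_{\min}\bigr).
\end{align*}
Integrating, and using $\Ent\ge0$ by Jensen's inequality, we get $2\lambda_1\int_0^T(\mathsf E[\mu_t]-\mathsf E_{\min})\,\de t\le\Ent(\mu_0)$ for every $T$. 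Finally, $\Ent(\mu_0)<\infty$ because $u\log u\le C_p(1+u^p)$ and $u_0\in L^p$ with $p>1$. This proves \eqref{e:energy integrability}.

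The rate \eqref{e:energy convergence} then follows from monotonicity \eqref{e:mono energy}. Setting $g(t)=\mathsf E[\mu_t]-\mathsf E_{\min}$, which is nonincreasing and integrable, we have $\tfrac t2 g(t)\le\int_{t/2}^t g(s)\,\de s\to0$.

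For the weak limits, note that $K$ is continuous on the compact $M\times M$. Therefore $\mu\mapsto\mathsf E[\mu]$ is continuous for weak convergence, since $\mu_{t_k}\otimes\mu_{t_k}\rightharpoonup\nu\otimes\nu$. Any weak limit $\nu$ along $t_k\to\infty$ thus satisfies $\mathsf E[\nu]=\lim g(t_k)+\mathsf E_{\min}=\mathsf E_{\min}$. If $\sigma$ is the unique minimizer, tightness (from compactness of $M$) implies that every sequence has a subsequence converging to $\sigma$, so $\mu_t\rightharpoonup\sigma$. Since $\bW_2$ metrizes weak convergence on compact $M$, \eqref{e:global weak convergence} follows.

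The main obstacle is justifying the entropy identity for the actual solution. The data is only $L^p$, may vanish on open sets, and $\log u$ is singular. My approach is regularization. First, approximate $u_0$ by smooth, strictly positive probability densities $u_0^\varepsilon$ with $\Ent(u_0^\varepsilon)\to\Ent(u_0)$; mollification plus $\varepsilon$-mixing with $\sigma$ works, using convexity of $u\log u$ and $L^p$ convergence. Second, because $K\in C^2$, the velocity $\nabla\cK[u]$ is uniformly Lipschitz. Consequently, the solutions are pushforwards under smooth flows, remain smooth and positive, and the computation above is legitimate for them. Third, pass to the limit in the integrated inequality
\begin{align*}
    \Ent(\mu^\varepsilon_T)+2\lambda_1\int_0^T g^\varepsilon(t)\,\de t\le\Ent(\mu^\varepsilon_0).
\end{align*}
This uses stability of the flow in $\bW_2$ (Lipschitz dependence of the characteristics on the measure), continuity of $\mathsf E$, and lower semicontinuity of $\Ent$. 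I would also verify that the formal series manipulation $\int u\,\Delta\cK[u]=-\sum\lambda_na_nb_n^2$ is valid. For smooth $u$ this holds by Parseval, since $\Delta\cK[u]\in L^2$ for $K\in C^2$. The series is at most $\|\Delta\cK[u]\|_2\|u\|_2$, so no convergence issue arises.
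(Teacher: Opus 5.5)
Your proposal is correct in outline. The deduction of the theorem from an integrated entropy inequality matches the paper: integrability via $\Ent\ge0$, the $o(t^{-1})$ rate via monotonicity, weak limits via continuity of $\mathsf E$, and $\bW_2$ convergence on compact $M$.

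The difference is how you justify the entropy inequality for $L^p$ data with vacuum. The paper does not regularize. It works along the characteristics $\phi_t$ of the $C^1$ field $X_t=-\nabla\cK[\mu_t]$. From $u_t\circ\phi_t=u_0/J_t$ and Liouville's formula $J_t=\exp\int_0^t(\operatorname{div}X_r)\circ\phi_r\,\de r$, it obtains $u_t\in L^p$ and
\begin{align*}
\Ent(u_t)-\Ent(u_0)=-\int_M u_0\log J_t\,\de x=\int_0^t\int_M u_r\,\Delta\cK[u_r]\,\de x\,\de r .
\end{align*}
This never differentiates $\log u$ and handles vacuum automatically. It also gives differentiability of $t\mapsto\Ent(u_t)$, hence the pointwise inequality \eqref{e:mono entropy} of \Cref{thm: intro mono entropy}. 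Smooth approximation enters only at a fixed time, to evaluate $\int u_t\,\Delta\cK[u_t]$ spectrally. Your route (regularize $u_0$, compute for regular positive solutions, pass to the limit by Dobrushin-type stability) yields only the integrated inequality. That suffices for this theorem, but costs a stability estimate in the initial data that the paper avoids.

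Two remarks on your justification.

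First, lower semicontinuity of $\Ent$ is not needed. Drop $\Ent(\mu^\varepsilon_T)\ge0$ before letting $\varepsilon\to0$. The bound $\limsup_\varepsilon\Ent(u^\varepsilon_0)\le\Ent(u_0)$ already follows from convexity of $z\log z$: use Jensen for a Markov smoothing such as the heat semigroup, and convexity for the mixing with $\sigma$.

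Second, the sentence ``because $K\in C^2$ the velocity is uniformly Lipschitz, consequently the solutions remain smooth'' does not follow. A $C^1$ field generates only a $C^1$ flow. Moreover $J_t$ involves $\operatorname{div}X_r=-\Delta\cK[u_r]$, which for a $C^2$ kernel is a priori only continuous. So $u^\varepsilon_t$ is only known to be continuous and positive, and the Eulerian identity $\frac{\de}{\de t}\int u\log u=-\int\nabla u\cdot\nabla\cK[u]$ is not immediately legitimate. The conclusion happens to be true under \Cref{a:kernel}: $\cK$ commutes with $\Delta$, so $\|\nabla\cK[u]\|_{H^{m+1}}\lesssim\|K\|_{C^2}\|u\|_{H^m}$ and $H^m$ regularity propagates. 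The simpler repair, however, is the paper's Liouville computation. Once you use it, regularizing the initial data becomes unnecessary.
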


The conclusion is genuinely global with respect to the initial datum: the
initial density need only be nonnegative and belong to $L^p(M)$ for some
$p>1$. In particular, no uniform positive lower bound, high Sobolev
regularity, or proximity to equilibrium is required. The initial density
may vanish on an open set of arbitrarily large volume. Since
\eqref{e:wass grad flow} is transported by a characteristic flow, such a
vacuum region is not instantaneously filled unlike in the diffusion case. Instead, it is carried by the flow at
every finite time. The convergence \eqref{e:global weak convergence} therefore comes from neither parabolic smoothing
nor a hidden pointwise lower bound.

The proof of \Cref{thm: intro global convergence} rests on an additional
Lyapunov functional that provides stronger control than the energy alone.
For $u\in L^p(M)\cap\cP(M)$, define its entropy by
\begin{align*}
    \Ent(u)
    \coloneqq
    \int_M u\log u\,\de x.
\end{align*}
The key ingredient is the following entropy inequality.

\begin{theorem}\label{thm: intro mono entropy}
    Adopt assumptions of \Cref{thm: intro global convergence}, we have
        \begin{align}\label{e:mono entropy}
            \frac{\de}{\de t} \Ent(u_t) \leq - 2\lambda_1 \left( \mathsf{E}[\mu_t] - \mathsf{E}_{\min} \right),
        \end{align}
    where $\lambda_1$ is the first nonzero Laplacian eigenvalue of $M$.
\end{theorem}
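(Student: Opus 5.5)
Formally differentiating the entropy along \eqref{e:wass grad flow} and integrating by parts gives
\begin{align*}
    \frac{\de}{\de t}\Ent(u_t)
    =
    \int_M(1+\log u_t)\,\dive\bigl(u_t\nabla\cK[u_t]\bigr)\,\de x
    =
    -\int_M\nabla u_t\cdot\nabla\cK[u_t]\,\de x
    =
    \int_M u_t\,\Delta\cK[u_t]\,\de x .
\end{align*}
The dissipation is therefore the pairing of $u_t$ with $\Delta\cK[u_t]$; the degenerate factor $u_t$ in front of the velocity has cancelled against $\nabla\log u_t$. We now expand in the common eigenbasis of \Cref{a:kernel}. Writing $b_n(t)=\int_M\phi_n u_t\,\de x$, we have $\cK[u_t]=\sum_n a_nb_n\phi_n$, so $\Delta\cK[u_t]=-\sum_n\lambda_na_nb_n\phi_n$. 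Hence
\[
    \frac{\de}{\de t}\Ent(u_t)=-\sum_{n\ge1}\lambda_na_nb_n^2 .
\]
Since $a_n\ge0$ and $\lambda_n\ge\lambda_1$ for $n\ge1$, this is at most $-\lambda_1\sum_{n\ge1}a_nb_n^2$. By \eqref{e:energy expansion}, that quantity equals $-2\lambda_1(\mathsf E[\mu_t]-\mathsf E_{\min})$, which is \eqref{e:mono entropy}.

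The identity $\int u\,\Delta\cK[u]=-\sum\lambda_na_nb_n^2$ needs justification, because $K\in C^2$ only. We would avoid spectral convergence issues by using symmetry of $K$. Let $L(x,y)\coloneqq\Delta_xK(x,y)$, which is continuous and equals $-\sum\lambda_na_n\phi_n(x)\phi_n(y)$ in the $L^2(M\times M)$ sense. The quantity $\iint L\,u\,u$ is then the quadratic form $-\sum\lambda_na_nb_n^2$, provided $u\in L^2$. For $u\in L^p$ with $1<p<2$, we approximate $u$ by smooth densities and pass to the limit using continuity of $L$ and weak convergence. Nonnegativity of each term lets Fatou's lemma or monotone limits give the inequality in the correct direction.

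The main obstacle is the rigorous entropy computation for weak solutions with only $L^p$ data, since $\log u_t$ and $\nabla u_t$ may be undefined. The plan is to use the Lagrangian (characteristic) representation. The velocity $v_t=-\nabla\cK[u_t]$ is $C^1$ in $x$, uniformly in $t$, because $K\in C^2$. So the flow map $X_t$ is a $C^1$ diffeomorphism, and $u_t(X_t(x))=u_0(x)/J_t(x)$ with $\partial_tJ_t=(\dive v_t)(X_t)\,J_t$. This gives $u_t\in L^p$ for all $t$ and
\[
    \Ent(u_t)=\Ent(u_0)-\int_M u_0\log J_t\,\de x .
\]
Differentiating yields $\frac{\de}{\de t}\Ent(u_t)=-\int u_t\dive v_t=\int u_t\,\Delta\cK[u_t]$. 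This derivation requires no smoothness of $u$, only finiteness of $\Ent(u_0)$, which holds since $u_0\in L^p$ with $p>1$. With this, the spectral computation above applies for a.e. $t$ and proves the theorem, in integrated form if needed.
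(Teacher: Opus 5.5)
Your proposal is correct and follows essentially the same route as the paper. The characteristic flow and Liouville's formula give $u_t\in L^p$, $\Ent(u_t)=\Ent(u_0)-\int_M u_0\log J_t\,\de x$, and $\frac{\de}{\de t}\Ent(u_t)=\int_M u_t\,\Delta\cK[u_t]\,\de x$; the spectral inequality is then proved for smooth approximating densities and passed to the limit using continuity of $K$ and $\Delta_xK$ (your use of the continuous kernel $\Delta_xK$ with Parseval on $L^2(M\times M)$ is just a tidier justification of the termwise expansion the paper writes out).
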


Formally, this inequality is easy to see: the
spectral decomposition in \Cref{a:kernel} gives the identity
\begin{align}\label{e:entropy spectral identity intro}
    \frac{\de}{\de t}\Ent(u_t)
    &=-\sum_{n=1}^{\infty}\lambda_n a_n b_n(t)^2
    \leq
    -\lambda_1\sum_{n=1}^{\infty}a_n b_n(t)^2
    =-2\lambda_1
    \left(\mathsf E[\mu_t]-\mathsf E_{\min}\right),
\end{align}
where $b_n(t)=\int_M\phi_n u_t\,\de x$. 
Thus, the nonnegativity of the spectral coefficients gives a direct,
unweighted coercive estimate for the full energy gap. No lower bound for
$u_t$ appears. Since $\Ent(u_t)\geq0$ by Jensen's inequality, and the energy gap is nonnegative and
nonincreasing, integrating \eqref{e:entropy spectral identity intro} first
gives \eqref{e:energy integrability}, and integrability together with
monotonicity yields
$\mathsf E[\mu_t]-\mathsf E_{\min}=o(t^{-1})$. This short argument is the
global mechanism behind \Cref{thm: intro global convergence}; in
particular, it simultaneously covers all kernels satisfying
\Cref{a:kernel}.

Several recent works obtain convergence through mechanisms different from ours. For entropy-regularized linearly convex energies of
the form $\cG(u)+\tau\Ent(u)$ with $\tau>0$,
\cite{chizat2026convergence} exploits the diffusion generated by the
entropy term to establish an $O(t^{-1})$ energy-decay rate, with faster
rates under stronger convexity assumptions. By contrast, \eqref{e:wass grad flow} contains no diffusion: entropy is not part of the energy \eqref{e:energy}, but instead serves as an auxiliary Lyapunov functional.

For Wasserstein gradient flows of MMD without diffusion, existing convergence
results either require additional pathwise, positivity, regularity, or
smallness assumptions, or exploit the exceptional structure of the
Coulomb kernel
\cite{arbel2019maximum,boufadene2025global,chizat2026quantitative,de2026wasserstein,rosenzweig2026wasserstein}.
The Coulomb kernel $\cK=(-\Delta)^{-1} \circ \Pi_0$, corresponding to $s=1$ in
\Cref{s:inverse laplacian kernel}, is distinguished by a transport maximum
principle. On the torus, \cite{chizat2026quantitative} uses this structure
to establish exponential energy decay for bounded initial densities and
bounded, uniformly positive targets. For arbitrary Borel probability
measures as initial data, \cite{de2026wasserstein} proves instantaneous
$L^\infty$-regularization and derives exponential decay toward bounded,
uniformly positive targets through a defective
Polyak--\L ojasiewicz inequality that explicitly accounts for vacuum
regions. For the inverse fractional Laplacian kernels in
\Cref{s:inverse laplacian kernel} with $s>1$,
\cite{chizat2026quantitative} establishes local polynomial convergence
when the initial density and the target are strictly positive and
sufficiently regular, and their initial discrepancy is sufficiently small.

\subsubsection{Sharpness and uniform-in-time nonlinear perturbation theory}
\label{s:sharp convergence result}

The entropy argument yields the universal $o(t^{-1})$-rate in
\Cref{thm: intro global convergence}, but \Cref{a:kernel} alone does not
imply any uniform improvement of this rate. This obstruction is already
visible in the linearization around the uniform measure
$\sigma=\de x$. Suppose that a solution $u_t^\varepsilon$ of
\eqref{e:wass grad flow} admits the expansion
\begin{align*}
    u_t^\varepsilon
    =
    1+\varepsilon \overline{f}_t+o(\varepsilon)
\end{align*}
as $\varepsilon\to0$. As shown in \Cref{s:linearization flow},
\begin{align*}
    \mathsf E[u_t^\varepsilon]-\mathsf E_{\min}
    =
    \varepsilon^2\mathsf E[\overline{f}_t]+o(\varepsilon^2),
\end{align*}
and the first-order perturbation $\overline{f}_t$ satisfies the
linearization of \eqref{e:wass grad flow}:
\begin{align}\label{e:linearized flow intro}
    \partial_t \overline{f}_t-\Delta\cK[\overline{f}_t]=0, \quad 
    \int_M \overline{f}_t\,\de x=0, \quad 
    t \in[0,\infty).
\end{align}

As follows from \eqref{e:linearized energy basis expansion}, if only finitely many
coefficients $a_n$ in the expansion of $K$ from \Cref{a:kernel} are
nonzero, then $\mathsf E[\overline{f}_t]$ decays exponentially as
$t\to\infty$. By contrast, if $K$ has infinitely many
positive coefficients, we show that the decay of $\mathsf E[\overline{f}_t]$ can be arbitrarily close to $t^{-1}$ along a sequence of times.

\begin{theorem}[Sharp $o(t^{-1})$-rate for the linearized flow for arbitrary kernels]
\label{thm:linear-sharp-delta subseq intro}
For any $\delta>0$ and any kernel $K$ satisfying \Cref{a:kernel} with
infinitely many positive $a_n$, there exists a solution $\overline{f}_t$ of the linearized equation
\eqref{e:linearized flow intro} in $H^{-1}(M)$ and a sequence of times
$\{t_k\}_{k\geq1}$ tending to infinity such that
\begin{align*}
    e^{-3}t_k^{-1-\delta}
    \leq
    \mathsf E[\overline{f}_{t_k}]
    \leq
    2t_k^{-1-\delta},
    \qquad k\geq1.
\end{align*}
\end{theorem}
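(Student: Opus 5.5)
The plan is to build $\overline f_t$ explicitly in the eigenbasis of \Cref{a:kernel}. Since $\cK[\phi_n]=a_n\phi_n$ and $-\Delta\phi_n=\lambda_n\phi_n$, the linearized equation \eqref{e:linearized flow intro} decouples mode by mode. A mean-zero initial datum $\overline f_0=\sum_{n\ge1}c_n\phi_n$ therefore evolves as
\[
\overline f_t=\sum_{n\ge1}c_n e^{-r_n t}\phi_n,\qquad r_n\coloneqq\lambda_n a_n .
\]
By \eqref{e:energy expansion} its energy is
\[
\mathsf E[\overline f_t]=\tfrac12\sum_n a_n c_n^2 e^{-2r_n t}.
\]
The only input we need about the rates is that $r_n\to0$ along the modes with $a_n>0$. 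This follows from $K\in C^2$: taking the trace of $-\Delta_x K$ on the diagonal and integrating gives
\[
\sum_n\lambda_n a_n=\int_M(-\Delta_xK)(x,x)\,\de x<\infty.
\]
The term-by-term justification here uses the orthonormality of the $\phi_n$ together with the $C^2$ regularity; this is a routine Mercer-type argument. In particular, infinitely many $r_n$ with $a_n>0$ are positive and arbitrarily small.

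Next we select a sparse subsequence $n_1<n_2<\cdots$ with $a_{n_k}>0$ and define $t_k\coloneqq 1/(2r_{n_k})$. The selection is inductive, making $t_{k+1}/t_k$ as large as we like at each step. We put energy only on these modes, choosing $c_{n_k}$ so that the weights satisfy $w_k\coloneqq\tfrac12 a_{n_k}c_{n_k}^2=t_k^{-1-\delta}$, and set all other $c_n=0$. Then
\[
\mathsf E[\overline f_{t}]=\sum_j t_j^{-1-\delta}e^{-t/t_j}.
\]
At $t=t_k$, the diagonal term alone contributes $e^{-1}t_k^{-1-\delta}\ge e^{-3}t_k^{-1-\delta}$, which already gives the lower bound.

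For the upper bound at $t=t_k$ we split the remaining terms into two groups.
\begin{itemize}
\item The earlier modes $j<k$ contribute $t_j^{-1-\delta}e^{-t_k/t_j}$. Each of these is $\le 2^{-j}t_k^{-1-\delta}$ once $t_k/t_j$ is large enough, because $x^{1+\delta}e^{-x}\to0$ as $x\to\infty$.
\item The later modes $j>k$ contribute at most $\sum_{j>k}t_j^{-1-\delta}$. This is $\le\tfrac12 t_k^{-1-\delta}$ if each $t_{j}/t_{j-1}\ge 2$ and, for instance, $t_{k+1}^{-1-\delta}\le\tfrac14 t_k^{-1-\delta}$.
\end{itemize}
Collecting the diagonal term (at most $1$), the earlier modes (at most $\sum_j 2^{-j}\le 1/2$) and the later modes (at most $1/2$), the total is at most $2t_k^{-1-\delta}$. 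Each requirement is a lower bound on $t_k$ given $t_1,\dots,t_{k-1}$, so the inductive choice of $n_k$ can meet all of them. This is the step where some care is needed: the constraints for index $k$ must involve only previously chosen times, and they do, since the earlier-mode condition is a condition on $t_k$ relative to each fixed $t_j$, $j<k$.

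Finally, we check that $\overline f_t\in H^{-1}(M)$. Using $c_{n_j}^2/\lambda_{n_j}=2w_j/r_{n_j}=4t_j^{-\delta}$, we get
\[
\|\overline f_0\|_{H^{-1}}^2=\sum_j \frac{c_{n_j}^2}{\lambda_{n_j}}=4\sum_j t_j^{-\delta}<\infty
\]
by geometric growth of $t_j$. Moreover $\|\overline f_t\|_{H^{-1}}$ is nonincreasing in $t$, so $\overline f_t\in H^{-1}(M)$ for all $t\ge0$. The series defines a genuine solution of \eqref{e:linearized flow intro} in $H^{-1}$, since each mode does and the sum converges in $C([0,\infty);H^{-1})$.
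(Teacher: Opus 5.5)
Your proposal is correct and follows essentially the same route as the paper. You decouple in the eigenbasis and use $\lambda_n a_n\to0$; the paper gets this more simply from $\Delta_xK\in L^2(M\times M)$, i.e.\ $\sum_n(\lambda_na_n)^2<\infty$, whereas your trace/Mercer argument uses more. You then inductively select a sparse subsequence of positive modes with weights $\propto(\lambda_{n_k}a_{n_k})^{1+\delta}$, so that earlier modes are exponentially suppressed and later ones are geometrically small, and you check $H^{-1}$ membership through $\sum_k(\lambda_{n_k}a_{n_k})^{\delta}<\infty$.

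One small bookkeeping slip: $\sum_{j\ge1}2^{-j}=1$, not $\le\tfrac12$. The bound still closes, because the diagonal term is exactly $e^{-1}t_k^{-1-\delta}$ rather than $t_k^{-1-\delta}$, so the total is at most $(e^{-1}+1+\tfrac12)\,t_k^{-1-\delta}<2t_k^{-1-\delta}$.
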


The restriction to a sequence of times is unavoidable for general
kernels. Indeed, in \Cref{thm:lacunary no uniform rate}, we construct a
smooth kernel $K$ such that, for any initial datum
$f_0\in H^{-1}(M)$ and any $\beta>0$, the corresponding linearized
solution $\overline f_t$ satisfies
\begin{align*}
    \limsup_{t\to\infty} \, t^\beta \cdot \mathsf E[\overline{f}_t]<\infty
    \quad\Longrightarrow\quad
    \liminf_{t\to\infty} \, t^{5\beta/4} \cdot \mathsf E[\overline{f}_t]=0.
\end{align*}
Thus no solution $\overline{f}_t$ for this kernel can satisfy a two-sided polynomial
estimate $\mathsf E[\overline{f}_t]\asymp t^{-\beta}$ for all sufficiently large times.

On the other hand, we can construct a suitable smooth kernel which realizes a two-sided $o(t^{-1})$-rate uniformly for all
$t\geq0$.

\begin{theorem}[Sharp $o(t^{-1})$-rate for the linearized flow]
\label{thm:linear-sharp-delta intro}
There exists a kernel
$K\in C^\infty(M\times M)$ satisfying \Cref{a:kernel} with the following property: for any $\delta>0$, there is a smooth solution
$\overline{f}_t$ of the linearized equation \eqref{e:linearized flow intro} such that
\begin{align*}
    C^{-1}(t+1)^{-1-\delta}
    \leq
    \mathsf E[\overline{f}_t]
    \leq
    C(t+1)^{-1-\delta}, \quad \forall t \geq 0.
\end{align*}
Here, $C>1$ depends only on $\delta$.
\end{theorem}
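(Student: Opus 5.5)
Since the linearized equation \eqref{e:linearized flow intro} is diagonal in the basis $\{\phi_n\}$, the plan is to build the kernel and the solution entirely in spectral terms. Writing $\overline f_t=\sum_{n\ge1}c_n(t)\phi_n$, the equation becomes $\dot c_n=-\lambda_n a_n c_n$, so $c_n(t)=c_n(0)e^{-\lambda_n a_n t}$. The energy is then
\[
\mathsf E[\overline f_t]=\tfrac12\sum_{n\ge1}a_n c_n(0)^2 e^{-2\lambda_n a_n t}.
\]
The task is to choose $a_n\ge0$, decaying fast enough that $K$ is smooth, and for each $\delta$ coefficients $c_n(0)$ decaying fast enough that $\overline f_t$ is smooth, so that this sum is comparable to $(t+1)^{-1-\delta}$ uniformly.

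\textbf{Choice of kernel.} I would fix a sequence of distinct modes $n_k$ (for instance one eigenfunction per distinct eigenvalue, with $n_k$ increasing) and put $a_n=0$ off this sequence. On the sequence I take $a_{n_k}=2^{-k}/\lambda_{n_k}$ whenever this is compatible with smoothness. Smoothness requires $a_n$ to decay faster than any power of $\lambda_n$, and by Weyl's law $\lambda_n\gtrsim n^{2/d}$. So I instead choose a sparse subsequence with $\lambda_{n_k}\le e^{k}$, say, and set $a_{n_k}=2^{-k}/\lambda_{n_k}$. Then the rates $r_k:=\lambda_{n_k}a_{n_k}=2^{-k}$ are exactly dyadic. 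Smoothness of $K=\sum_k a_{n_k}\phi_{n_k}(x)\phi_{n_k}(y)$ follows from the standard bound $\|\phi_n\|_{C^m}\lesssim \lambda_n^{(d+2m)/4}$ (Sobolev embedding plus elliptic estimates), provided $a_{n_k}\lambda_{n_k}^{N}$ is summable for every $N$.

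The key point is that I am free to choose the $n_k$ so that $\lambda_{n_k}$ grows as slowly as I like, but I need $2^{-k}$ to beat every power of $\lambda_{n_k}$. So I choose the subsequence so that $\lambda_{n_k}$ grows subexponentially, e.g.\ $\lambda_{n_k}\approx e^{\sqrt k}$. This is possible because eigenvalues are unbounded with gaps that are eventually small relative to their size, so one can pick an eigenvalue in each window $[e^{\sqrt k},2e^{\sqrt k}]$. Then $a_{n_k}\lambda_{n_k}^N\le 2^{-k}e^{N\sqrt k}$ is summable for every $N$, which gives $K\in C^\infty$ and \Cref{a:kernel}.

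\textbf{Choice of initial datum.} The energy becomes
\[
\tfrac12\sum_k a_{n_k}c_k^2 e^{-2^{1-k}t}.
\]
I set $w_k:=a_{n_k}c_k^2=2^{-k(1+\delta)}$. This requires $c_k^2=2^{-k\delta}\lambda_{n_k}$, and since $\lambda_{n_k}=e^{O(\sqrt k)}$ the coefficients still decay geometrically. That is enough for $\overline f_0\in H^{-1}$, but not for smoothness, and this is the main obstacle. The fix is to distribute weights $w_k=2^{-k(1+\delta)}$ while keeping $c_k$ superpolynomially small in $\lambda_{n_k}$. This forces $a_{n_k}\ge w_k/c_k^2$, in tension with the requirement that $a$ be superpolynomially small.

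I resolve this by making $\lambda_{n_k}$ grow even more slowly, e.g.\ $\lambda_{n_k}\approx e^{k^{1/3}}$ with $c_k^2=2^{-k\delta/2}$ and $a_{n_k}=2^{-k(1+\delta/2)}$. But then the rates $r_k=\lambda_{n_k}a_{n_k}$ are no longer exactly dyadic. The better route is to let several consecutive $k$ share near-equal eigenvalues. Concretely, I would normalize the rates to $r_k=2^{-k}$ by defining $a_{n_k}=2^{-k}/\lambda_{n_k}$ with $\lambda_{n_k}=e^{O(k^{1/3})}$; both $a$ and $c$ are then superpolynomially small in $\lambda_{n_k}$, since $2^{-k}$ beats $e^{Nk^{1/3}}$. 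With $c_k^2=2^{-k\delta}\lambda_{n_k}$ this gives smoothness of $\overline f_t$, because $c_k\lambda_{n_k}^N$ is summable. Finally I have to make the kernel independent of $\delta$. The kernel above, with $r_k=2^{-k}$, indeed does not depend on $\delta$; only the $c_k$ do, so $C$ depends only on $\delta$ as required.

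\textbf{Two-sided bound.} It remains to show
\[
S(t)=\sum_{k\ge1}2^{-k(1+\delta)}e^{-2^{1-k}t}\asymp_\delta (t+1)^{-1-\delta}.
\]
For $t\le1$ the sum is bounded above and below by constants. For $t\ge1$, pick $k_0$ with $2^{k_0}\approx t$. The lower bound comes from the single term $k=k_0$, which is $\gtrsim 2^{-k_0(1+\delta)}e^{-2}\gtrsim t^{-1-\delta}$. For the upper bound, the terms with $k\ge k_0$ form a geometric tail of size $\lesssim_\delta t^{-1-\delta}$. The terms with $k<k_0$ are bounded by $\sum_{j\ge1}2^{-(k_0-j)(1+\delta)}e^{-2^{j}}\lesssim t^{-1-\delta}\sum_j 2^{j(1+\delta)}e^{-2^j}$, which converges. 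This gives the bound with $C$ depending only on $\delta$.
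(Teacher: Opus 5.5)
Your final construction is correct and is essentially the paper's proof. Both use exact dyadic rates $\lambda_{n_k}a_{n_k}=2^{-k}$ and weights $a_{n_k}c_k^2=2^{-k(1+\delta)}$; both take the single block with $2^{k}\approx t$ for the lower bound, and the same geometric-tail / double-exponential split for the upper bound. The sparse subsequence and the ``main obstacle'' are unnecessary, however. The paper simply uses every mode, with $a_n=2^{-n}\lambda_n^{-1}$ and $b_n=2^{-n\delta/2}\lambda_n^{1/2}$, and it is the Weyl \emph{upper} bound $\lambda_n\lesssim n^{2/d}$ that makes both sequences decay faster than any power of $\lambda_n$. For the same reason, your $e^{\sqrt{k}}$ version was already smooth, since $2^{-k\delta/2}e^{C\sqrt{k}}$ is summable for every $C$. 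Using all modes also spares you the Weyl-counting step needed to pick distinct modes in overlapping windows.
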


We next seek to transfer this type of two-sided decay estimate to an
exact solution $u_t^\varepsilon$ of the nonlinear Wasserstein gradient flow \eqref{e:wass grad flow}. However, passing from the linearized construction to the nonlinear equation is not
a formal perturbation. A conventional stability estimate between $u_t^\varepsilon$ and $1+\varepsilon \overline{f}_t$ on a fixed
interval $[0,T]$ allows the error to grow with $T$. Such an estimate is
insufficient here because the linear signal that we seek to detect tends
to zero, and a coarse uniform error bound can therefore eventually dominate the desired asymptotic
profile.

We overcome this difficulty for the inverse fractional Laplacian kernels
$\cK=(-\Delta)^{-s}\circ\Pi_0$ introduced in
\Cref{s:inverse laplacian kernel} on the flat torus
$M=\TT^d=\left(\bR/(2\pi\bZ)\right)^{\otimes d}$ by analyzing the nonlinear perturbation equation \eqref{e:wass grad flow perturbation intro}, which is a perturbation of \eqref{e:wass grad flow} around the uniform measure. By these estimates, we obtain an exact nonlinear
solution with the two-sided decay stated in the following \Cref{thm:wass -1-delta rate intro} and
a uniform-in-time Besov-norm bound in \Cref{cor:uniform perturbation Besov norms}.

\begin{theorem}[$o(t^{-1})$-rate for the Wasserstein gradient flow \eqref{e:wass grad flow}]\label{thm:wass -1-delta rate intro}
Let $M=\TT^d$.
For any $s>1$ and $\sigma>d/2$, let
$\cK=(-\Delta)^{-s}\circ\Pi_0$.
There exists a solution $u_t$ of \eqref{e:wass grad flow} such that for any $t \geq 0$, $u_t\in B^\sigma(\TT^d)\cap\cP(\TT^d)$, and
\begin{align}\label{e:wass -1-delta rate}
C^{-1}(t+1)^{-\beta}
\leq
\mathsf{E}[u_t]-\mathsf{E}_{\min}
\leq
C(t+1)^{-\beta}.
\end{align}
Here $B^\sigma(\TT^d)$ is the Besov space defined in
\eqref{e:Besov norm def}, $\beta=\frac{s+\sigma}{s-1}$, and $C>1$ depends only on $s,\sigma,d$.
\end{theorem}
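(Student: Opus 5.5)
The plan is to construct the solution as a small perturbation of the uniform measure, $u_t = 1 + f_t$ with $\int_{\TT^d} f_t\,\de x = 0$. On the torus the kernel acts as $\cK = (-\Delta)^{-s}\circ\Pi_0$, so the energy gap is
\[
\mathsf E[u_t]-\mathsf E_{\min}=\tfrac12\|f_t\|_{\dot H^{-s}}^2 .
\]
The perturbation then solves
\[
\partial_t f = -(-\Delta)^{1-s} f + \dive\bigl(f\,\nabla(-\Delta)^{-s} f\bigr).
\]
Since $s>1$, the linear part damps the Fourier mode $k$ at rate $|k|^{-(2s-2)}$, so high frequencies decay slowly. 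This is the mechanism I will exploit.

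\emph{Linear heuristic and choice of data.} Let $\Delta_j$ denote dyadic blocks. Take the initial datum $f_0=\varepsilon\sum_{j\ge1}2^{-\sigma j}g_j$, where each $g_j$ is a fixed real trigonometric polynomial with frequencies $|k|\sim2^j$, for instance $\cos(k_j\cdot x)$ with $|k_j|=2^j$. Then $f_0$ lies in $B^\sigma$ with $\sup_j 2^{\sigma j}\|\Delta_j f_0\|\sim\varepsilon$. Since $\sigma>d/2$, taking $\varepsilon$ small gives $f_0\in L^\infty$ with $|f_0|\le 1/2$. For the linear flow, block $j$ contributes energy of order
\[
\varepsilon^2\,2^{-2(s+\sigma)j}\,e^{-c\,2^{-(2s-2)j}t}.
\]
Summing over $j$, the sum is dominated by the index $j$ with $2^{(2s-2)j}\sim t$. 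This gives exactly $(t+1)^{-\beta}$ with $\beta=\frac{s+\sigma}{s-1}$, which explains the exponent.

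\emph{Uniform-in-time Besov bound and the upper bound.} First I would prove a global, uniform-in-time estimate $\sup_{t\ge0}\|f_t\|_{B^\sigma}\le 2\|f_0\|_{B^\sigma}\lesssim\varepsilon$. This is a blockwise energy estimate: apply $\Delta_j$ and pair with $\Delta_j f$. Split the nonlinear term $\dive(f\nabla\cK f)$ by Bony's paraproduct. The transport part $T_f$ is handled with a commutator estimate, and the remaining paraproducts are bounded using $\|\nabla\cK f\|_{W^{1,\infty}}\lesssim\|f\|_{B^\sigma}$, which holds because $s>1$ and $\sigma>d/2$. The linear damping is only $\gtrsim 2^{-(2s-2)j}$, so it cannot absorb a growing term. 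I therefore plan to close a bootstrap in which the nonlinear forcing on block $j$ is itself bounded by $\|f\|_{B^\sigma}$ times a damped quantity, namely a low-frequency norm of $\nabla\cK f$ that is integrable in time. From the energy identity this norm has the form $\int_0^\infty\|\nabla\cK f_t\|_2^2\,\de t\le \mathsf E[u_0]-\mathsf E_{\min}\lesssim\varepsilon^2$.

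The uniform bound also keeps $u_t\ge 1/2$. The dissipation \eqref{e:mono energy} then gives
\[
\tfrac{\de}{\de t}\mathsf E\le -\tfrac12\|f\|_{\dot H^{1-2s}}^2 .
\]
Next, \Cref{lem:negative Sobolev Besov interpolation} interpolates $\dot H^{-s}$ between $\dot H^{1-2s}$ and $B^\sigma$, giving
\[
\|f\|_{\dot H^{-s}}\lesssim\|f\|_{\dot H^{1-2s}}^{\theta}\,\|f\|_{B^\sigma}^{1-\theta}.
\]
Combined with the uniform Besov bound, this yields the differential inequality $\mathsf E'\le -c\,\mathsf E^{1+1/\beta}$. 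Integrating it gives the upper bound in \eqref{e:wass -1-delta rate}.

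\emph{Lower bound via blockwise persistence.} This is the step I expect to be the main obstacle. For each $j$ I want to show that
\[
\|\Delta_j f_t\|_2\ge \tfrac12\,\varepsilon\,2^{-\sigma j}e^{-C2^{-(2s-2)j}t}
\]
on the linear time scale $t\lesssim 2^{(2s-2)j}$. Given this, at time $t$ one picks $j$ with $2^{(2s-2)j}\sim t+1$ and obtains $\mathsf E\gtrsim 2^{-2sj}\|\Delta_j f_t\|_2^2\gtrsim (t+1)^{-\beta}$. To prove it, I would write Duhamel's formula for the block $\langle f_t,g_j\rangle$ and bound the nonlinear contribution on $[0,t]$. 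The bound comes from bilinear frequency interactions, controlled by the uniform $B^\sigma$ estimate together with the decay already established for lower blocks. I aim to show that this contribution is at most $C\varepsilon$ times the linear signal. The delicate point is avoiding a loss of a factor $t$ over the long interval $t\sim2^{(2s-2)j}$. To get around it, I would use the time integrability of $\|\nabla\cK f\|$ and the gain in the output frequency from $\dive$ combined with $(-\Delta)^{-s}$. If a single-mode pairing fails, I would fall back to comparing the nonlinear solution with its linearization blockwise in a weighted $B^\sigma$ norm; the same estimates give that comparison.
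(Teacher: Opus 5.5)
Your choice of data, the upper bound (dissipation with $u_t\ge 1/2$ plus the $\dot H^{1-2s}$--$B^\sigma$ interpolation), and the selection $2^{(2s-2)j}\sim t+1$ all match the paper. There are, however, two genuine gaps: the uniform-in-time $B^\sigma$ bound does not close by the mechanism you describe, and neither does the lower bound.

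\textbf{The uniform $B^\sigma$ bound.} The energy identity gives $\int_0^\infty\|\nabla\cK f_t\|_2^2\,\de t\lesssim\varepsilon^2$. This is an $L^2$-in-time bound on a low-frequency norm. A Gr\"onwall argument for $\|\proj_j f_t\|_2$ (whose damping rate $2^{-rj}$, with $r=2s-2$, absorbs nothing) needs two things this bound does not supply.
\begin{itemize}
\item It needs $L^1$-in-time control. The $L^2$ bound only gives $\int_0^T\|\nabla\cK f_t\|_2\,\de t\lesssim\varepsilon\sqrt T$.
\item It needs control of different quantities. One is $\|\dive v_t\|_\infty=\|\cA f_t\|_\infty$, where $v_t=\nabla\cK[f_t]$ and $\cA=(-\Delta)^{1-s}\circ\Pi_0$. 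Above all, it needs the terms in which the velocity carries the output frequency: $f\,\dive v=-f\,\cA f$, and the part of $v\cdot\nabla f$ with $v$ at high frequency.
\end{itemize}
These terms contain $\proj_j\cA f_t$ at the frequency of the block being estimated, so no low-frequency norm of $v$ controls them. Their natural pointwise bound is $\|f_t\|_{B^\sigma}\|\cA f_t\|_{B^\sigma}$. Already for the linearized flow with your data, $\|\cA f_t\|_{B^\sigma}\approx\varepsilon\sup_j 2^{-rj}e^{-c2^{-rj}t}\approx\varepsilon(1+t)^{-1}$, which is not integrable in time.

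The missing idea is to take the supremum over blocks \emph{outside} the time integral. The paper uses the Chemin--Lerner quantity $\cY_T=\sup_k 2^{\sigma k}\int_0^T\|\proj_k\cA f_t\|_2\,\de t$. The damping controls it blockwise through maximal regularity: $2^{-rk}\int_0^T\|\proj_k f_t\|_2\,\de t\lesssim\|\proj_k f_0\|_2+\int_0^T\|F_k\|_2\,\de t$, where $F_k$ is the block forcing. In your normalization, $\cY_T$ in turn bounds $\int_0^T\|\dive v_t\|_\infty\,\de t$ and $\sup_k 2^{(\sigma+1)k}\int_0^T\|\proj_k v_t\|_2\,\de t$ by $C\cY_T$. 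By the product lemma and \Cref{lem:torus dyadic transport}, it also bounds $2^{\sigma k}\int_0^T\|F_k\|_2\,\de t$ by $C\cX_T\cY_T$. This closes $\cX_T+\cY_T\le C_0\cX_0$ uniformly in $T$.

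\textbf{The lower bound.} Duhamel for the single coefficient $\langle f_t,g_j\rangle$ cannot work. The transport term $\langle v_t\cdot\nabla f_t,g_j\rangle$ costs a factor $2^j$ against the signal $\varepsilon 2^{-\sigma j}$. This is not a technical loss. The flow displaces mass by a total amount $\int_0^\infty\|v_t\|_\infty\,\de t$ of order $\varepsilon$, far more than the wavelength $2^{-j}$. So only the $L^2$ norm of the whole block, not its component along a fixed mode, can be expected to persist.

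Your fallback has the same defect. The source $\dive(fv)$ in the equation for $f_t-\overline f_t$ costs a derivative, so the comparison holds only in $B^{\sigma-1}$; this is exactly the loss in \Cref{cor:uniform perturbation Besov norms}. That gives a blockwise error of order $\varepsilon^2 2^{-(\sigma-1)j}$ against a signal of order $\varepsilon 2^{-\sigma j}$.

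What works is the block energy identity you already use for the upper estimate, read from below:
\begin{itemize}
\item Pairing $\proj_j f_t$ with itself makes the transport term $v\cdot\nabla\proj_j f$ cost only $\tfrac12\|\dive v_t\|_\infty\|\proj_j f_t\|_2^2$.
\item The commutator $[\proj_j,v\cdot\nabla]f$ has no derivative loss.
\item With the Chemin--Lerner bounds above, this yields $\|\proj_j f_t\|_2\ge e^{-C(2^{-rj}t+\varepsilon)}\|\proj_j f_0\|_2-C\varepsilon^2 2^{-\sigma j}$.
\end{itemize}
For small $\varepsilon$, the right-hand side stays above $\tfrac12\varepsilon 2^{-\sigma j}$ for $t\le\tau_0 2^{rj}$.
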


Given $\delta>0$, we can choose $s$ and $\sigma$ in \Cref{thm:wass -1-delta rate intro} so that
\begin{align*}
s>1+\frac{d/2+1}{\delta},
\qquad
\sigma=\delta(s-1)-1.
\end{align*}
Then $\sigma>d/2$ and $\beta=\frac{s+\sigma}{s-1}=1+\delta$. Thus, the rate in \eqref{e:wass -1-delta rate} becomes
$(1+t)^{-1-\delta}$, as in
\Cref{thm:linear-sharp-delta subseq intro,thm:linear-sharp-delta intro}.
Moreover, since $\sigma>d/2$, the embedding
$B^\sigma(\TT^d)\hookrightarrow C(\TT^d)$ in
\eqref{e:Besov norm control infinity norm} implies that the solution
constructed in \Cref{thm:wass -1-delta rate intro} has a continuous
density for every $t\geq0$.

We next describe the mechanism underlying the proof of
\Cref{thm:wass -1-delta rate intro}. We seek a solution of
\eqref{e:wass grad flow} of the form
$u_t=1+\varepsilon f_t$ for a mean-zero $f_t$. The perturbation
$f_t$ satisfies
\begin{align}\label{e:wass grad flow perturbation intro}
    \partial_t f_t-\Delta\cK[f_t]
    =
    \varepsilon
    \left\langle\nabla\cK[f_t],\nabla f_t\right\rangle
    +
    \varepsilon f_t\Delta\cK[f_t],
\end{align}
whereas the corresponding linearized solution $\overline f_t$ satisfies
\eqref{e:linearized flow intro}. By \eqref{e:energy ut ft},
\begin{align*}
    \mathsf E[u_t]-\mathsf E_{\min}
    =
    \varepsilon^2\mathsf E[f_t].
\end{align*}

The upper bound in \Cref{thm:wass -1-delta rate intro} does not depend
on the particular initial datum used in its proof.
Indeed, \Cref{prop:eulerian uniform estimate} applies to every
$f_0\in L_0^2(\TT^d)\cap B^\sigma(\TT^d)$ satisfying
\eqref{e:eulerian smallness condition} and gives
\begin{align*}
    \sup_{t\geq0}\|f_t\|_{B^\sigma}
    \lesssim  \|f_0\|_{B^\sigma}.
\end{align*}
The constant in \eqref{e:eulerian smallness condition} may be chosen small so that this estimate, together with the embedding
$B^\sigma(\TT^d)\hookrightarrow L^\infty(\TT^d)$, ensures
$u_t=1+\varepsilon f_t\geq1/2$ for all $t\geq0$. Combining \eqref{e:mono energy} with the interpolation inequality in
\Cref{lem:negative Sobolev Besov interpolation} yields
\begin{align}\label{e:diff ineql E ft intro}
    \frac{\de}{\de t}\mathsf E[f_t]
    \lesssim
    -
    \|f_0\|_{B^\sigma}^{-\frac{2(s-1)}{\sigma+s}}
    \mathsf E[f_t]^{\frac{\sigma+2s-1}{\sigma+s}}.
\end{align}
Integrating this differential inequality gives
\begin{align*}
    \mathsf E[f_t]
    \lesssim
    \|f_0\|_{B^\sigma}^2 \cdot 
    (1+t)^{-\frac{s+\sigma}{s-1}}.
\end{align*}
Equivalently,
\begin{align*}
    \mathsf E[u_t]-\mathsf E_{\min}
    \lesssim \varepsilon^2 \cdot \|f_0\|_{B^\sigma}^2 \cdot 
    (1+t)^{-\frac{s+\sigma}{s-1}}.
\end{align*}
Thus, every initial datum satisfying
\eqref{e:eulerian smallness condition} obeys the upper decay estimate in \Cref{thm:wass -1-delta rate intro}.

To construct a solution attaining the lower bound in \Cref{thm:wass -1-delta rate intro}, we first look at the
dyadic structure of the linearized equation \eqref{e:linearized flow intro}. For $k \in \bZ_+$, on Laplacian
eigenfunctions with eigenvalues comparable to $2^{2k}$, the linearized
generator $-\Delta\cK=(-\Delta)^{1-s}\circ\Pi_0$ has size comparable to $2^{-2(s-1)k}$. Hence the $k$-th dyadic Laplacian eigenfunction block
of $\overline f_t$ decays on the time scale $2^{2(s-1)k}$. We construct $f_0$ by choosing one
normalized Laplacian eigenfunction from each dyadic block and
assigning it the initial amplitude $2^{-\sigma k}$.  The corresponding linearized energy $ \mathsf{E}[\overline{f}_t]$ in \eqref{e:linearized energy basis expansion} is then
\begin{align*}
    \mathsf E[\overline f_t]
    \asymp
    \sum_{k=1}^{\infty}
    2^{-2(s+\sigma)k}
    \exp\left(-2\cdot2^{-2(s-1)k}t\right).
\end{align*}
The contribution of the $k$-th block remains comparable to
$2^{-2(s+\sigma)k}$ until times of order $2^{2(s-1)k}$. For any sufficiently large $t$, selecting the persistent $k$-th block with $k \sim \frac{\log_2 t}{2(s-1)}$ gives
\begin{align*}
\mathsf{E}[\overline{f}_t] \gtrsim 2^{-2(s+\sigma)k}
\asymp
t^{-(s+\sigma)/(s-1)}.
\end{align*}
This gives the desired lower bound at the level of the linearized equation.

The remaining difficulty is to show that this blockwise persistence
survives the nonlinear transport in
\eqref{e:wass grad flow perturbation intro}. Applying the smooth
Littlewood--Paley projector $\proj_k$ from
\eqref{e:partition of unity projection} yields the blockwise evolution
equation \eqref{e:projected Eulerian equation} for $\proj_k f_t$. Its
transport term is $\varepsilon \left\langle\nabla\cK[f_t],\nabla\proj_k f_t\right\rangle$, corresponding to the velocity
$v_t=\varepsilon\nabla\cK[f_t]$. The estimates
\eqref{e:dyadic persistence quantitative} and
\eqref{e:dyadic persistence simple} in
\Cref{prop:eulerian uniform estimate} show that, when $\varepsilon$ is
sufficiently small, the accumulated nonlinear errors do not destroy
the selected blocks before their linear decay time scales.
Consequently, the $L^2$-mass of the $k$-th block persists up to times
of order $2^{2(s-1)k}$, yielding the matching lower bound in
\Cref{thm:wass -1-delta rate intro}.

The uniform-in-time Besov norms bounds obtained in
\Cref{prop:eulerian uniform estimate} also yield a global
nonlinear-to-linear comparison. Indeed, as shown in
\eqref{e:ft-bar ft differential equation},
$f_t-\overline f_t$ solves the inhomogeneous linearized equation with
source $\varepsilon\dive(f_t\nabla\cK[f_t])$. Applying the linear
semigroup estimate in \Cref{lem:semigroup estimate} gives the following
result. The loss of one derivative is caused by $\nabla f_t$ in the nonlinear source term in \eqref{e:wass grad flow perturbation intro}.

\begin{corollary}[Uniform perturbation of the linearized solution]
\label{cor:uniform perturbation Besov norms}
Adopt the assumptions and notation of
\Cref{prop:eulerian uniform estimate}.
Let
$\overline{f}_t=e^{-t\cA}f_0$ be the solution of
\eqref{e:linearized flow in nonlinear section}.
Then there exists a constant $C$, depending only on $s,\sigma,d$, such that
\begin{align}\label{e:uniform perturbation Besov norms}
\sup_{T\geq 0}
\|f_t-\overline f_t\|_{\BB_T^{\sigma-1}}
\leq
C\varepsilon\|f_0\|_{B^\sigma}^2.
\end{align}
Here the modified Besov norm $\|\cdot\|_{\BB_T^{\sigma-1}}$ is defined in \eqref{e:uniform in time Besov norm}.
\end{corollary}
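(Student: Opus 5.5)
We argue through the Duhamel formula for the difference $g_t\coloneqq f_t-\overline f_t$. Subtracting \eqref{e:linearized flow intro} from \eqref{e:wass grad flow perturbation intro}, and using the identity
\[
\langle\nabla\cK[f_t],\nabla f_t\rangle+f_t\Delta\cK[f_t]=\dive\bigl(f_t\nabla\cK[f_t]\bigr),
\]
we see that $g_t$ solves
\[
\partial_t g_t+\cA g_t=\varepsilon\,\dive\bigl(f_t\nabla\cK[f_t]\bigr),\qquad g_0=0,
\]
where $\cA=(-\Delta)^{1-s}\circ\Pi_0$. This is \eqref{e:ft-bar ft differential equation}. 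Hence
\[
g_t=\varepsilon\int_0^t e^{-(t-\tau)\cA}\,\dive\bigl(f_\tau\nabla\cK[f_\tau]\bigr)\,\de\tau .
\]
The plan is to apply the linear semigroup estimate of \Cref{lem:semigroup estimate} to this Duhamel integral. We take that lemma to bound $\|\cdot\|_{\BB_T^{\sigma-1}}$ of the Duhamel integral by a time-uniform norm of the source at regularity $\sigma-1$, in the same modified Besov scale $\BB_T$ from \eqref{e:uniform in time Besov norm}. It then suffices to show
\[
\sup_{T\ge0}\bigl\|\dive\bigl(f_\tau\nabla\cK[f_\tau]\bigr)\bigr\|_{\BB_T^{\sigma-1}}\lesssim\|f_0\|_{B^\sigma}^2 ,
\]
or whatever source norm the lemma requires.

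For the source bound, the divergence costs one derivative, so it is enough to control $f_\tau\nabla\cK[f_\tau]$ in the $\sigma$-level norm. Since $\sigma>d/2$, the space $B^\sigma$ is an algebra: we apply the Bony paraproduct decomposition blockwise, and the embedding \eqref{e:Besov norm control infinity norm} gives
\[
\|f\,h\|_{B^\sigma}\lesssim\|f\|_{B^\sigma}\|h\|_{B^\sigma}.
\]
The factor $\nabla\cK[f_\tau]=\nabla(-\Delta)^{-s}f_\tau$ is smoother than $f_\tau$ by $2s-1>1$ derivatives, so $\|\nabla\cK[f_\tau]\|_{B^\sigma}\lesssim\|f_\tau\|_{B^\sigma}$. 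Combined with the uniform bound $\sup_t\|f_t\|_{B^\sigma}\lesssim\|f_0\|_{B^\sigma}$ from \Cref{prop:eulerian uniform estimate}, this gives
\[
\sup_\tau\bigl\|\dive\bigl(f_\tau\nabla\cK[f_\tau]\bigr)\bigr\|_{B^{\sigma-1}}\lesssim\|f_0\|_{B^\sigma}^2 .
\]
This is the source of the derivative loss: the output is measured in regularity $\sigma-1$ rather than $\sigma$.

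The main obstacle is uniformity in $T$. The semigroup $e^{-t\cA}$ decays only polynomially on high frequencies and not at all uniformly, since the $k$-th block decays at rate $2^{-2(s-1)k}$. A naive bound $\int_0^T\|\text{source}\|\,\de\tau$ therefore grows linearly in $T$. I expect the modified norm $\BB_T^{\sigma-1}$ to be designed exactly for this: its blockwise time weights let each dyadic block $k$ absorb the time integral through $\int_0^\infty e^{-c2^{-2(s-1)k}\tau}\,\de\tau\sim2^{2(s-1)k}$, which is compensated by the block weights. If the lemma needs the source in a blockwise time-integrated form, I would first try bounding each block of $\proj_k\dive(f\nabla\cK[f])$ using the dyadic estimates \eqref{e:dyadic persistence quantitative} from \Cref{prop:eulerian uniform estimate} rather than the crude sup bound. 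The point to verify is that the low-frequency factor $\nabla\cK[f]$ gains $2s-1$ derivatives, which exceeds the $2(s-1)$ time-scale loss by exactly one derivative. That is consistent with losing exactly one derivative, from $\sigma$ to $\sigma-1$. Taking the supremum over $T$ then yields \eqref{e:uniform perturbation Besov norms} with $C=C(s,\sigma,d)$.
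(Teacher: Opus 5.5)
There is a genuine gap, at exactly the step you flag as the main obstacle, and the proposal does not resolve it. \Cref{lem:semigroup estimate} does not take a time-uniform norm of the source. Its right-hand side is the Chemin--Lerner norm $\|G\|_{B_T^{\rho}}=\sup_{k}2^{k\rho}\int_0^T\|\proj_kG(t)\|_2\,\de t$, which is an $L^1$-in-time quantity. This cannot be avoided: for a source that does not decay, the component $\|\cA[g]\|_{B_T^{\sigma-1}}$ of $\|g\|_{\BB_T^{\sigma-1}}$ is, block by block, comparable to $\int_0^T\|\proj_kG\|_2\,\de t$.

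Your bound $\sup_\tau\|\dive(f_\tau\nabla\cK[f_\tau])\|_{B^{\sigma-1}}\lesssim\|f_0\|_{B^\sigma}^2$ is true but insufficient. Fed into Duhamel, it only gives $\|\cA[g]\|_{B_T^{\sigma-1}}\lesssim T$. It also gives $\|\proj_kg_t\|_2\lesssim\omega_k^{-1}2^{-(\sigma-1)k}$, i.e.\ uniform control only in $B^{\sigma-1-2(s-1)}$.

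The remedies you suggest do not repair this:
\begin{itemize}
\item The blockwise decay $e^{-c\omega_kt}$ of the semigroup absorbs a time integral only at the price $\omega_k^{-1}=2^{2(s-1)k}$ at the output frequency.
\item \eqref{e:dyadic persistence quantitative} is a lower bound on $\|\proj_kf_t\|_2$, so it cannot supply upper bounds on the source.
\item Your derivative count compares a gain and a loss at different frequencies. In the paraproduct $\sum_k\bfS_{k-4}(\nabla\cK[f])\,\proj_kf$, the low-frequency factor at frequency $2^j$ gains only $2^{-(2s-1)j}$, which is nothing for $j\le0$. The semigroup loss, by contrast, is $2^{2(s-1)k}$ with $k\gg j$.
\end{itemize}

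The missing idea is that time integrability is carried by the velocity itself, through the dissipation half of \eqref{e:eulerian global bound}. Write $v_t=\varepsilon\nabla\cK[f_t]=\varepsilon\cD\cA[f_t]$. The factor $\cD$ gains one derivative, so $\|v\|_{B_T^{\sigma+1}}\le C\varepsilon\cY_T$ as in \eqref{e:velocity Chemin bound}. Moreover, $\cY_T=\|\cA[f_t]\|_{B_T^\sigma}\le C_0\cX_0$ uniformly in $T$ by \Cref{prop:eulerian uniform estimate}.

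This is where your count of $2s-1$ against $2(s-1)$ is actually correct. The $j$-th block of $\cA[f]$ has size $\omega_j\|\proj_jf\|_2$ and lives on the time scale $\omega_j^{-1}$, so it integrates in time to $O(2^{-\sigma j})$. But this balance happens at the frequency of $v$, not at the output frequency.

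One then uses the mixed product estimate \eqref{e:Besov product lemma in time}, placing $f$ in $L^\infty_tB^\sigma$ and $v$ in the Chemin--Lerner norm. This gives, for every $T$, $\|\dive(f v)\|_{B_T^{\sigma-1}}\le C\|fv\|_{B_T^\sigma}\le C\cX_T\|v\|_{B_T^\sigma}\le C\varepsilon\cX_0^2$. Applying \Cref{lem:semigroup estimate} with $\rho=\sigma-1$ and zero initial datum to \eqref{e:ft-bar ft differential equation} then yields \eqref{e:uniform perturbation Besov norms}. Using only $\sup_t\|f_t\|_{B^\sigma}$, as your proposal does, cannot produce this estimate; the bound on $\cY_T$ is indispensable.
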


In the proofs of \Cref{thm:wass -1-delta rate intro} and
\Cref{prop:eulerian uniform estimate}, the only torus-specific ingredient
is the Fourier-series proof of the dyadic transport estimate in
\Cref{lem:torus dyadic transport}. The spectral construction of the
initial datum, the blockwise energy-persistence estimate, the bootstrap
argument, and the energy interpolation use only the Laplacian spectral
decomposition introduced in \Cref{s:preliminary besov norms}.
Consequently, the same construction extends to a closed manifold once an
analogue of \Cref{lem:torus dyadic transport}, together with the standard
well-posedness and propagation-of-regularity results used in
\cite{chizat2026quantitative}, is available on that manifold.

We conclude the introduction by discussing several examples of kernels
satisfying \Cref{a:kernel}.

\subsection{Examples of kernels satisfying \Cref{a:kernel}}\label{s:kernel examples}

\subsubsection{Zonal kernels on the sphere}\label{s:sphere kernel}

Let $M=\bS^d\subseteq\bR^{d+1}$ be the unit sphere equipped with the metric induced from $\bR^{d+1}$, and consider a zonal kernel
\begin{align*}
    K(x,y)=f\bigl(\langle x,y\rangle\bigr),
    \qquad x,y\in\bS^d,
\end{align*}
where $f\in C^2([-1,1])$ and $\langle x,y\rangle$ denotes the Euclidean inner product. For each $\ell\geq0$, let $\cH_\ell$ be the space of spherical harmonics of degree $\ell$. This is an eigenspace of $-\Delta_{\bS^d}$ with eigenvalue $\lambda_\ell=\ell(\ell+d-1)$. If $\{Y_{\ell,m}\}_{m=1}^{N(d,\ell)}$ is a real orthonormal basis of $\cH_\ell$, where $N(d,\ell)=\dim\cH_\ell$, then the Funk--Hecke formula implies that $\cK$ acts by a scalar $a_\ell$ on $\cH_\ell$. The corresponding spectral expansion of $K$ is
\begin{align}\label{e:sphere kernel expansion}
    K(x,y)
    =
    \sum_{\ell=0}^{\infty}
    a_\ell
    \sum_{m=1}^{N(d,\ell)}
    Y_{\ell,m}(x)Y_{\ell,m}(y),
\end{align}
as in part~(2) of \Cref{a:kernel}. According to Schoenberg's characterization of positive-definite zonal kernels on spheres \cite{schoenberg1942positive}, the condition $a_\ell\geq0$ for every $\ell\geq1$ can be characterized by the nonnegativity of the Gegenbauer expansion coefficients of $f$ for $d\geq2$ or the Chebyshev expansion coefficients of $f$ for $d = 1$. In particular, a convenient sufficient condition for \Cref{a:kernel}, valid in every dimension, is
\begin{align}\label{e:positive power series zonal kernel}
    f(t)
    =
    c+\sum_{n=1}^{\infty}c_nt^n,
    \quad c \in \bR, \,
    c_n\geq0,
    \quad
    \sum_{n=1}^{\infty}n^2c_n<\infty,
\end{align}
where the last condition ensures that $f\in C^2([-1,1])$.

A central example for the applications considered here is the exponential dot-product kernel
\begin{align}\label{e:transformer kernel}
    f(t)=e^{\beta t},
    \qquad \beta>0.
\end{align}
Its Gegenbauer expansion gives
\begin{align}\label{e:transformer kernel coefficients}
    a_\ell
    =
    \Vol(\bS^d)
    \Gamma\Bigl(\frac{d+1}{2}\Bigr)
    \Bigl(\frac{2}{\beta}\Bigr)^{\frac{d-1}{2}}
    I_{\ell+\frac{d-1}{2}}(\beta),
    \qquad \ell\geq0,
\end{align}
where $I_\nu$ denotes the modified Bessel function of the first kind. Since
\begin{align*}
    I_\nu(\beta)>0
    \quad
    \text{ for }\beta>0\text{ and }\nu\geq0,
\end{align*}
we have $a_\ell>0$ for every $\ell\geq0$. See
\cite[Proposition~3.4]{geshkovski2025mathematical} for further details. This exponential kernel \eqref{e:transformer kernel} arises in idealized continuous-depth models of self-attention. In particular, the unnormalized self-attention dynamics studied in \cite{geshkovski2025mathematical} are closely related to the Wasserstein gradient ascent associated with $K(x,y)=e^{\beta\langle x,y\rangle}$, which drives concentration and token clustering
\cite{geshkovski2025mathematical,chen2025quantitative}. The descending flow \eqref{e:wass grad flow} considered in the present paper has the opposite sign and therefore describes a dispersive counterpart of the attractive clustering dynamics arising in transformer dynamics. This relation places the same family of kernels at the intersection of transformer models, interacting-particle dynamics, and Wasserstein gradient flows; see \cite{geshkovski2025mathematical,rigollet2026mean,chen2025quantitative,chen2026propagation,bruno2025emergence,bruno2026multiscale}.

Another important family of zonal kernels includes the logarithmic, Coulomb, and Riesz kernels associated with the Euclidean chordal distance on the sphere. For any $\alpha \in ( -d,0) $, let
    \begin{align}\label{e:Riesz Euclidean kernel}
        K(x,y) =  |x-y|^\alpha, \quad x,y\in \bS^d,
    \end{align}
with the corresponding function $f(t) = 2^{\alpha/2} \cdot (1-t)^{\alpha/2}$. The associated Riesz energy is
\begin{align}\label{e:Riesz Euclidean energy}
\mathsf{E}[\mu]
=
\frac{1}{2}
\int_{\bS^d}
\int_{\bS^d}
|x-y|^\alpha
\,\mathrm{d}\mu(x)\,\mathrm{d}\mu(y).
\end{align}
The condition $\alpha>-d$ is the local integrability condition
for $K(x,y)$ with respect to spherical volume. When $\alpha \in (-d,0)$, the kernel $K$ is singular on the diagonal $x=y$, and hence does not satisfy the $C^2$-regularity requirement in \Cref{a:kernel}. Nevertheless, its Funk--Hecke coefficients are strictly positive. More precisely, in the spherical harmonic expansion \eqref{e:sphere kernel expansion}, its coefficients for \eqref{e:Riesz Euclidean kernel} are
\begin{align*}
a_\ell
=
c_{d,\alpha}
\frac{\bigl(-\tfrac{\alpha}{2}\bigr)_\ell}
{\bigl(d+\tfrac{\alpha}{2}\bigr)_\ell},
\ \forall \ell\geq0, \quad \text{where } c_{d,\alpha}
=
\Vol(\bS^d) \cdot 2^{d+\alpha-1}
\frac{
\Gamma\bigl(\tfrac{d+1}{2}\bigr)
\Gamma\bigl(\tfrac{d+\alpha}{2}\bigr)
}{
\sqrt{\pi} 
\Gamma\bigl(d+\tfrac{\alpha}{2}\bigr)
}
>0,
\end{align*}
and $(q)_\ell$ denotes the Pochhammer symbol. Since when $\alpha \in (-d,0)$, $-\frac{\alpha}{2}>0$ and $d+\frac{\alpha}{2}>0$, 
we have $a_\ell>0, \forall\,\ell\geq0$.

The singularity in \eqref{e:Riesz Euclidean kernel} when $\alpha \in(-d , 0 ) $ can be removed without destroying the positivity of the spherical harmonic coefficients. For any $\varepsilon>0$, consider the regularized Riesz kernel
\begin{align}\label{e:regularized Riesz Euclidean kernel}
K_{\varepsilon}(x,y)
=
\bigl(
\varepsilon^2+|x-y|^2
\bigr)^{\alpha/2}, \quad x , y \in \bS^{d} .
\end{align}
For $x,y \in \bS^{d}$, the Laplace-transform identity gives 
\begin{align*}
K_{\varepsilon}(x,y) =
\bigl(
\varepsilon^2+|x-y|^2
\bigr)^{\alpha/2} =
\frac1{\Gamma(-\alpha/2)}
\int_0^\infty
r^{-\alpha/2-1}
e^{-(\varepsilon^2+2)r}
e^{2r(x\cdot y)} 
\, \de r .
\end{align*}
For every $r>0$, the exponential kernel $e^{2r\,x\cdot y}$ has strictly positive spherical harmonic coefficients, as discussed above in \eqref{e:transformer kernel coefficients}. Therefore, $K_{\varepsilon}$ also has strictly positive coefficients. Thus, for every $\varepsilon>0$, the regularized Riesz kernel $K_{\varepsilon}$ satisfies all the assumptions in \Cref{a:kernel}. Moreover,
\begin{align*}
\lim_{\varepsilon \to 0} K_{\varepsilon}(x,y)
= 
K(x,y), \quad \forall x \neq y,
\end{align*}
and the corresponding regularized energies $\mathsf{E}[\cdot]$ converge to the Riesz energy \eqref{e:Riesz Euclidean energy} by the monotone convergence theorem.

These Riesz-type kernels are closely related to classical problems in potential theory, in which one seeks equilibrium distributions of particles interacting through a repulsive Riesz potential. Bilyk, Dai, and Matzke \cite{bilyk2019geodesic,bilyk2018stolarsky} study the analogous energy obtained by replacing the Euclidean chordal distance $|x-y|$ with the geodesic distance $\rho(x,y)=\arccos(x\cdot y)$.
They study the coefficients of the kernels
    \begin{align}\label{e:Riesz geodesic kernel}
        K(x,y) = \rho(x,y) ^{\alpha}
    \end{align}
for $\alpha\in(-d,0)$, as well as the associated discrete Riesz energies and their connections with spherical discrepancy and generalized Stolarsky principles.

\subsubsection{Inverse fractional Laplacian}\label{s:inverse laplacian kernel}
Let $M$ be a closed manifold of dimension $d \geq 1$ satisfying \eqref{a:normalized uniform measure}. For any $u \in L^p(M)$ with $p > 1$, we define its zero-mean projection by
\begin{align*}
\Pi_0 u \coloneqq u - \int_M u \de x ,
\end{align*}
and for any $s > 0$, we consider the operator
    \begin{align}\label{e:inverse laplacian operator}
         \cK[u] = (-\Delta)^{-s} (\Pi_0 u).
    \end{align}
The associated kernel $K$ is therefore
\begin{align}\label{e:inverse laplacian kernel}
K(x,y)=\sum_{n=1}^{\infty}\lambda_n^{-s}\phi_n(x)\phi_n(y).
\end{align}
Thus $a_0=0$ and $a_n=\lambda_n^{-s}>0$ for every $n\geq1$. When $s>d/2+1$, the Bernstein inequality in \Cref{lem:Bernstein inequalities} implies that $K\in C^2(M\times M)$, so \Cref{a:kernel} is satisfied.

For any $u \in \cP(M) \cap L^p(M)$, since $\int_M \cK[u]  \de x = 0$, the interaction energy becomes
    \begin{align*}
        \mathsf{E}[u] =\frac{1}{2} \int_M \Pi_0 u \cdot \cK[u] \, \de x = \frac{1}{2} \|\Pi_0 u \|_{\dot{H}^{-s}(M)} ^2 ,
    \end{align*}
where $\| \cdot \|_{\dot{H}^{-s}(M)} $ is the homogeneous Sobolev norm of order $-s$ on $M$. The recent work \cite{chizat2026quantitative} studies the Wasserstein gradient flow of the more general target-dependent functional on the flat torus $M = \TT^d$,
\begin{align}\label{e:laplacian inverse MMD}
\mathsf{E}_{v}[u]=\frac{1}{2}\|u-v\|_{\dot{H}^{-s}(\TT^d)}^2,
\end{align}
where $v\in\cP(\TT^d)$ is fixed. It establishes well-posedness in weak regularity classes for the full range $s\geq1$, including the singular regimes of the kernels \eqref{e:inverse laplacian kernel} outside the $C^2$-assumption of \Cref{a:kernel}. In particular, when $s = 1$, both \cite{chizat2026quantitative,de2026wasserstein} prove exponential decay of $\mathsf{E}_{v}[u]$ to $0$ for uniformly positive target $v$ along the Wasserstein gradient flow of the energy \eqref{e:laplacian inverse MMD}. For $s > 1$, \cite{chizat2026quantitative} proves local polynomial convergence under strict positivity and Sobolev regularity assumptions on both $u$ and $v$, provided that $\mathsf{E}_{v}[u]$ is initially small.

By contrast, \Cref{thm: intro global convergence} allows any nonnegative initial density $u_0\in L^p(M)$, $p>1$, and does not require closeness to the uniform measure initially. The $C^2$-assumption is used to obtain the requisite well-posedness and justify the entropy calculation, rather than in the underlying spectral coercivity mechanism. It is therefore natural to expect an extension of the entropy method to some of the singular inverse fractional Laplacian regimes once an appropriate weak-solution theory and entropy identity are available.


\section{Monotonicity of entropy: proof of \Cref{thm: intro global convergence} and \Cref{thm: intro mono entropy}}\label{s:global convergence entropy inequality}
In this section, we first deduce \Cref{thm: intro global convergence} from \Cref{thm: intro mono entropy}, and then prove the monotonicity of $\Ent(u_t)$ in \Cref{thm: intro mono entropy}.

\begin{proof}[Proof of \Cref{thm: intro global convergence}]
Let $ e(t) \coloneqq \mathsf{E}[\mu_t]-\mathsf{E}_{\min} \geq 0$. Integrating \eqref{e:mono entropy} in \Cref{thm: intro mono entropy} from $0$ to $T > 0$ gives
\begin{align*}
    2\lambda_1 \int_0^T e(t) \,\de t
    &\leq
    \Ent(u_0)-\Ent(u_T).
\end{align*}
The entropy $\Ent(u_T)$ is bounded below. More precisely, by Jensen's inequality,
\begin{align*}
    \Ent(u_T)
    =
    \int_M u_T\log u_T\,\de x
    \geq
    -\log (\Vol(M)),
\end{align*}
and equality holds only when $u_T = (\Vol(M))^{-1}$. Therefore, letting $T\to\infty$, we obtain
\begin{align}\label{e:energy-gap-integrable}
    2\lambda_1 \int_0^\infty e(t)\,\de t
    \leq \Ent(u_0) + \log (\Vol(M))<\infty.
\end{align}
This proves \eqref{e:energy integrability}. Then, since $e(t)$ is nonincreasing in $t$ by \eqref{e:mono energy}, we have that
\begin{align*}
    t e(t)
    &\leq
    2\int_{t/2}^{t} e(s)\,\de s \leq  2\int_{t/2}^{\infty} e(s)\,\de s\overset{\eqref{e:energy-gap-integrable}}{\longrightarrow} 0, \quad \text{ as } t \to \infty.
\end{align*}
This proves \eqref{e:energy convergence}.

It remains to identify the weak limits of $\mu_t$. Since $M$ is compact, Prokhorov's theorem implies that every sequence $\{\mu_{t_j}\}$ has a weakly convergent subsequence in $\cP(M)$. Let $\{t_j\}\to\infty$ be any sequence such that
\begin{align*}
    \mu_{t_j}\rightharpoonup \mu_\infty
\end{align*}
weakly for some $\mu_\infty \in \cP(M)$. Then, $\mu_{t_j} \otimes \mu_{t_j} \rightharpoonup \mu_\infty \otimes \mu_\infty$. Moreover, because the kernel $K$ is continuous on the compact space $M\times M$ under \Cref{a:kernel}, the map $\mu\mapsto \mathsf{E}[\mu]$ is continuous with respect to weak convergence on $\cP(M)$.
Combining this continuity with \eqref{e:energy convergence}, we find
\begin{align*}
    \mathsf{E}[\mu_\infty]
    &=
    \lim_{j\to\infty}\mathsf{E}[\mu_{t_j}]
    =
    \mathsf{E}_{\min}.
\end{align*}
Thus, every weak limit $\mu_\infty$ of $\mu_t$ is a minimizer of $\mathsf{E}$. 

In particular, if $\sigma\in\cP(M)$ is the unique minimizer of $\mathsf{E}$, then compactness of $\cP(M)$ and the preceding argument imply that
    \begin{align*}
        \mu_t \rightharpoonup \sigma,
    \end{align*}
because $\sigma$ is the only possible subsequential limit. Furthermore, on a compact metric space, $\bW_2$-convergence of probability measures is equivalent to weak convergence of probability measures (\cite[Theorem 6.9]{villani2009optimal}). Consequently,
\begin{align*}
    \lim_{t\to\infty}\bW_2(\mu_t,\sigma)=0.
\end{align*}

\end{proof}

We now prove \Cref{thm: intro mono entropy}. The proof is inspired by \cite[Theorem~2.1]{zimin2025learning}.

\begin{proof}[Proof of \Cref{thm: intro mono entropy}]
    We first prove that for any $t \geq 0$, $u_t \in L^p(M)$. Denote $X_t = -\gradW \mathsf{E}[\mu_t]$. Since $K\in C^2(M\times M)$ by \Cref{a:kernel} and $\mu_t$ is a probability measure, 
        \begin{align}\label{e:C^1 X_t bounded by C^2 K}
            \sup_{t\geq 0}\|X_t\|_{C^1(M)}
\leq C \cdot \|K\|_{C^2(M\times M)},
        \end{align}
    for a constant $ C = C(M) $, and $t \mapsto X_t$ is continuous. Therefore, $X_t$ generates a $C^1$ flow of diffeomorphisms $\phi_t$ satisfying
    \begin{align*}
        \partial_t\phi_t(x)=X_t(\phi_t(x)),
\qquad
\phi_0(x)=x.
    \end{align*}
Define the Jacobian
    \begin{align*}
        J_t(x)\coloneqq \det \left[ \nabla \phi_t(x) \right].
    \end{align*}
Liouville’s formula gives
    \begin{align}\label{e:Liouville's formula}
        J_t(x) = \exp \left( \int_0 ^t (\operatorname{div}X_r)(\phi_r(x)) \, \de r \right) .
    \end{align}
By \eqref{e:C^1 X_t bounded by C^2 K}, $\|\operatorname{div}X_t\|_{C^{0}(M)}
\leq C \cdot \|K\|_{C^2(M\times M)}$. Thus, $e^{-C \cdot \|K\|_{C^2} \cdot t}\le J_t(x)\le e^{C \cdot \|K\|_{C^2} \cdot t}$. Also, the continuity equation explicitly gives $\mu_t = (\phi_t)_{\#} \mu_0$ and thus
    \begin{align}\label{e:explicit characteristic formula}
        u_t(\phi_t(x))=u_0(x) \cdot J_t(x) ^{-1}.
    \end{align}
Thus,
    \begin{align*}
        \|u_t\|_{p} ^p =
\int_M |u_0(x)|^p J_t(x)^{1-p}\, \de x \le
e^{(p-1)C \|K\|_{C^2} \cdot t}\|u_0\|_{p}^p.
    \end{align*}
Hence $u_t\in L^p(M)$ for any $t \geq 0$.

Then, since $p>1$, there exists $C_p>0$ such that $z\log z\le C_p(1+z^p)$, while $z\log z\ge-e^{-1}$, for any $z \geq 0$. Thus, $\Ent(u_t)<\infty$ for any $ t \geq 0$. Also, by \eqref{e:explicit characteristic formula}, we can write
    \begin{align*}
        \Ent(u_t) =
\int_M
u_0(x)
\log\left(\frac{u_0(x)}{J_t(x)}\right)\, \de x=
\Ent(u_0)
-
\int_M u_0(x) \log J_t(x)\, \de x.
    \end{align*}
Using \eqref{e:Liouville's formula}, we obtain that for any $t \geq 0$,
    \begin{align*}
        \begin{split}
            &\Ent(u_t)-\Ent(u_0)
=
-\int_0^t\int_M
u_0(x)(\operatorname{div} X_r)(\phi_r(x))
\,\de x\, \de r\\
&=
-\int_0^t\int_M
u_r(y)(\operatorname{div}X_r)(y)
\,\de y\, \de r =
\int_0^t\int_M
u_r(y)\Delta \cK[u_r](y)
\, \de y\, \de r,
        \end{split}
    \end{align*}
where we used the change of variables $y = \phi_r(x)$ in the second equality and used \eqref{e:wass gradient} in the last equality. More generally, for any $0\leq t<s$,
    \begin{align}\label{e:difference entropy}
        \Ent(u_s) - \Ent(u_t) = \int_t ^s \int_M  u_r(y) \Delta \cK[u_r](y) \,\de y \de r.
    \end{align}
Because $r \mapsto \int_M  u_r(y) \Delta \cK[u_r](y) \,\de y$ is continuous, $\Ent(u_t)$ is thus differentiable in $t$ by \eqref{e:difference entropy} and satisfies
\begin{align}\label{e:derivative entropy}
    \frac{\de}{\de t}\Ent(u_t)
    =\int_M u_t(x)\,\Delta \cK[u_t](x)\,\de x.
\end{align}
For any fixed $t \geq 0$, let $\left\{u_t ^{(m)} \right\}_m \subseteq C^{\infty}(M) \cap \cP(M)$ such that $u_t ^{(m)} \to u_t$ in $L^p(M)$. For any fixed $m$, expand $u_t ^{(m)}$ using the orthonormal Laplacian eigenbasis:
\begin{align*}
    u_t ^{(m)} =\sum_{n=0}^{\infty} u_n ^{(m)} (t) \phi_n,
    \qquad
    u_n ^{(m)} (t) \coloneqq \int_M u^{(m)} _t\phi_n\,\de x.
\end{align*}
By the diagonal expansion of $K$ in \Cref{a:kernel}, we have
\begin{align*}
    \cK[u_t ^{(m)}](x)
    =
    \int_M K(x,y)u_t ^{(m)} (y)\,\de y
    =
    \sum_{n=0}^{\infty}a_n u_n ^{(m)} (t)\phi_n(x),
\end{align*}
and thus
\begin{align}\label{e:Lap K[u_t] expansion}
    \Delta \cK[u_t ^{(m)}](x)
    =
    -\sum_{n=1}^{\infty}\lambda_n a_n u_n ^{(m)} (t)\phi_n(x),
\end{align}
where the $n=0$ term disappears because $\lambda_0=0$. Hence,
\begin{align*}
    \int_M u_t ^{(m)}(x)\,\Delta \cK[u_t ^{(m)}](x)\,\de x
    &=
    \int_M
    \left(\sum_{k=0}^{\infty} u_k ^{(m)} (t)\phi_k(x)\right)
    \left(-\sum_{n=1}^{\infty}\lambda_n a_n  u_n ^{(m)} (t)\phi_n(x)\right)
    \, \de x \\
    &=
    -\sum_{n=1}^{\infty}\lambda_n a_n \left(u_n ^{(m)}(t) \right) ^2,
\end{align*}
where we used the fact that $\int_M \phi_m \phi_n \de x = \delta_{mn}$.
Since $a_n\geq 0$ and $\lambda_n\geq \lambda_1$ for every $n\geq 1$ by \Cref{a:kernel}, we obtain
\begin{align*}
    \int_M u_t ^{(m)}(x)\,\Delta \cK[u_t ^{(m)}](x)\,\de x
    \leq
    -\lambda_1\sum_{n=1}^{\infty}a_n \left(u_n ^{(m)}(t) \right) ^2 = -2\lambda_1 \left( \mathsf E[u_t ^{(m)}]-\mathsf E_{\min} \right),
\end{align*}
where the last equality follows from \eqref{e:energy expansion}.
Because $u_t ^{(m)} \to u_t$ in $L^p(M)$, passing to the limit $m\to\infty$ in this inequality and using \eqref{e:derivative entropy}, we obtain
    \begin{align*}
        \begin{split}
            \frac{\de}{\de t}\Ent(u_t)
    &=\int_M u_t(x)\,\Delta \cK[u_t](x)\,\de x = \lim_{m \to \infty} \int_M u_t ^{(m)}(x)\,\Delta \cK[u_t ^{(m)}](x)\,\de x 
    \\  &\leq \lim_{m \to \infty} -2\lambda_1 \left( \mathsf E[u_t ^{(m)}]-\mathsf E_{\min} \right) =  -2\lambda_1 \left( \mathsf E[u_t]-\mathsf E_{\min} \right) .
        \end{split}
    \end{align*}
This proves \eqref{e:mono entropy}.
\end{proof}

\section{Linearized flows around the uniform measure: proof of \Cref{thm:linear-sharp-delta subseq intro} and \Cref{thm:linear-sharp-delta intro}}\label{s:linearization flow}

In this section, we study the linearization of \eqref{e:wass grad flow} around the uniform measure $\sigma =  \de x$, which under \Cref{a:kernel} is a minimizer of $\mathsf E$ over $\cP(M)$. To derive the linearized equation, consider a solution with density
\begin{align}\label{e:linearization around sigma}
    u_t^\varepsilon=1+\varepsilon f_t^\varepsilon,
    \qquad
    \int_M f_t^\varepsilon\,\de x=0.
\end{align}
Since $\cK[1]=a_0$ is constant, $\nabla\cK[1]=0$. Substituting \eqref{e:linearization around sigma} into \eqref{e:wass grad flow} gives the exact perturbation equation
\begin{align}\label{e:wass grad flow perturbation linear flow section}
    \partial_t f_t^\varepsilon-\Delta\cK[f_t^\varepsilon]
    =
    \varepsilon\,\dive\left(f_t^\varepsilon\nabla\cK[f_t^\varepsilon]\right).
\end{align}
Discarding the quadratic term, or equivalently setting $\varepsilon=0$, yields the linearized equation
\begin{align}\label{e:linearized flow}
    \partial_t\overline f_t-\Delta\cK[\overline f_t]=0,
    \qquad
    \overline f_0=f_0,
    \qquad
    \int_M\overline f_t\,\de x=0.
\end{align}
Equivalently, $\overline{f}_t$ satisfies
\begin{align}\label{e:linearized semigroup}
    \overline{f}_t=e^{t\Delta \cK }f_0.
\end{align}
Under \Cref{a:kernel}, we can use \eqref{e:Lap K[u_t] expansion} to further write $\overline{f}_t$ in the following form: $\forall t \geq 0$,
    \begin{align}\label{e:linearized basis expansion}
        \overline{f}_t = \sum_{ n = 1} ^{\infty} e^{-a_n \lambda_n t} b_n \phi_n, \quad \text{ with } b_n \coloneqq \int_M f_0 \phi_n \,\de x.
    \end{align}
Thus, the decay of the linearized flow is determined by the spectral rates $\{a_n\lambda_n\}_{n\geq1}$.

The corresponding linearized energy is the quadratic term in the energy gap. Indeed, for every mean-zero function $f$,
\begin{align}\label{e:energy linearization around sigma}
    \mathsf E[1+\varepsilon f]-\mathsf E_{\min}
    =
    \varepsilon^2\mathsf E[f].
\end{align}
We therefore set the linearized energy as
\begin{align}\label{e:linearized energy}
    \mathsf{E}_{\mathrm{linear}}[\mu_t^\varepsilon]
    \coloneqq
    \mathsf E[\overline f_t].
\end{align}
Using the basis expansion \eqref{e:linearized basis expansion},
    \begin{align}\label{e:linearized energy basis expansion}
        \mathsf{E}[\overline{f}_t] = \frac{1}{2}\sum_{ n = 1} ^{\infty} a_n b_n ^2 e^{-2 a_n \lambda_n t} .
    \end{align}
We next show that one can choose a kernel $K$ satisfying \Cref{a:kernel} and an initial datum $f_0\in H^{-1}(M)$ for which the linearized energy decays at a rate arbitrarily close to $t^{-1}$, which proves \Cref{thm:linear-sharp-delta intro}.

\begin{theorem}[Sharp $o(t^{-1})$-rate for the linearized flow]\label{thm:linear-sharp-delta}
There exists a kernel $K \in C^{\infty}(M \times M)$ satisfying \Cref{a:kernel}, such that for any $\delta>0$, there is a zero-mean function $f_0\in C^\infty(M)$, such that the solution $\overline{f}_t$ to the linearized equation \eqref{e:linearized flow} satisfies that
\begin{align*}
    e^{-5} \cdot 
    t^{-1-\delta} \leq \mathsf E[\overline{f}_t]
    \leq C_{\delta} \cdot 
    t^{-1-\delta} , \quad \forall t \geq 2.
\end{align*}
Here, $C_{\delta} >0$ is a constant depending only on $\delta$.
\end{theorem}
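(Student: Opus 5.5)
We construct the kernel spectrally, with rates $a_n\lambda_n$ placed on a dyadic ladder, and then choose initial data that spread energy across the ladder. By \eqref{e:linearized energy basis expansion}, everything reduces to estimating a lacunary-type exponential sum.

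\textbf{Choice of kernel.} Fix a subsequence of indices $n_1<n_2<\cdots$ with $\lambda_{n_k}\ge 2^{k}$ (possible by Weyl's law). Set $a_n=0$ for $n\notin\{n_k\}$, $a_0=0$, and
\[
a_{n_k}\coloneqq 2^{-k}\lambda_{n_k}^{-1},
\]
so that the decay rates are exactly $r_k\coloneqq a_{n_k}\lambda_{n_k}=2^{-k}$. The $a_n$ must not depend on $\delta$, and this choice indeed does not.

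\textbf{Smoothness of $K$.} This is the main technical constraint on the kernel. We have
\[
K=\sum_k a_{n_k}\phi_{n_k}\otimes\phi_{n_k},
\]
and $\|\phi_n\|_{C^m}\lesssim \lambda_n^{(m+d)/2}$ by the Bernstein/Sobolev bounds (\Cref{lem:Bernstein inequalities}). To get $K\in C^\infty(M\times M)$ we need $a_{n_k}\lambda_{n_k}^{N}$ to be summable for every $N$. Since the rates are forced to be $2^{-k}$, we can only make $a_{n_k}$ small by making $\lambda_{n_k}$ large. So we choose $n_k$ growing fast enough that $\lambda_{n_k}\ge 2^{k^2}$; then
\[
a_{n_k}\lambda_{n_k}^N=2^{-k}\lambda_{n_k}^{N-1}
\]
is bad, since it grows with $N$. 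The normalisation must therefore be rethought: we set $a_{n_k}=2^{-k}/\lambda_{n_k}$ and accept that this is tiny only through $\lambda_{n_k}^{-1}$. Smoothness thus genuinely fails with rates $2^{-k}$ unless the coefficients $b_{n_k}$ absorb the growth. Instead, we keep $K$ smooth by taking $\lambda_{n_k}$ superexponential and letting the rates be $r_k=a_{n_k}\lambda_{n_k}$ with
\[
a_{n_k}=e^{-\sqrt{\lambda_{n_k}}}\quad(\text{smooth}),
\]
and we choose $n_k$ so that the $r_k$ are spaced dyadically up to a factor, i.e.\ $r_{k+1}\in[r_k/4,r_k/2]$. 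We get this by picking eigenvalues along Weyl's law, which has gaps $O(1)$ relative to $\lambda$, so $\log r$ moves by $O(\sqrt\lambda\cdot\lambda^{-1}\cdot\text{gap})$ steps, fine enough to land in each dyadic window. This is the step I expect to be the real obstacle: arranging a smooth kernel whose rates $r_k\downarrow0$ form an (approximately) geometric sequence. Approximate geometric spacing, with ratio in $[1/4,1/2]$, suffices.

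\textbf{Initial datum and energy bounds.} Given $\delta$, set $b_{n_k}$ so that $a_{n_k}b_{n_k}^2=2r_k^{1+\delta}$, and $b_n=0$ otherwise. Then $f_0$ is smooth, since $b_{n_k}^2=2r_k^{1+\delta}/a_{n_k}=2r_k^{\delta}\lambda_{n_k}$ and $r_k$ decays superpolynomially in $\lambda_{n_k}$. The energy is
\[
\mathsf E[\overline f_t]=\sum_k r_k^{1+\delta}e^{-2r_kt}.
\]
\emph{Lower bound.} For $t\ge2$, pick $k$ with $r_k\in[1/(4t),1/(2t)]$ (the spacing makes this possible once $r_1\ge1/4$). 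Then that single term gives $\ge (4t)^{-1-\delta}e^{-1}$; absorbing $4^{-1-\delta}$ appropriately, or choosing windows $r_k t\in[1/2,1]$ to get $e^{-2}\cdot$(ratio factors), yields the stated $e^{-5}t^{-1-\delta}$ after adjusting constants.
\emph{Upper bound.} Terms with $r_k\le1/t$ form a geometric series dominated by its largest term, giving $\lesssim_\delta t^{-1-\delta}$. Terms with $r_k>1/t$ satisfy $r_k\ge 2^j/t$ for the $j$-th one, and contribute
\[
\sum_j (2^j/t)^{1+\delta}e^{-2^{j+1}}\lesssim t^{-1-\delta},
\]
which gives $C_\delta$.
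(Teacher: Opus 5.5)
There is a genuine gap in your kernel construction. Your first attempt was the right one, and you abandoned it because of a miscalculation. Smoothness of $K$ needs $a_{n_k}\lambda_{n_k}^N=r_k\lambda_{n_k}^{N-1}$ to be summable for every $N$. With $r_k=2^{-k}$ this forces $\lambda_{n_k}$ to grow \emph{subexponentially} in $k$. So it is precisely your requirement $\lambda_{n_k}\ge 2^k$ (let alone $2^{k^2}$) that destroys smoothness; the coefficients $b_{n_k}$ play no role in the regularity of $K$.

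The assumption only asks for a diagonal expansion in \emph{some} orthonormal eigenbasis, so the $a_n$ are indexed by $n$, not by the eigenvalue. You can therefore take every index: $a_n=2^{-n}\lambda_n^{-1}$ for all $n\ge1$. Weyl's law $\lambda_n\asymp n^{2/d}$ makes $2^{-n}\lambda_n^{N-1}$ summable, so $K\in C^\infty$, and the rates are exactly $a_n\lambda_n=2^{-n}$. With $b_n=2^{-n\delta/2}\lambda_n^{1/2}$ one gets $2\mathsf E[\overline f_t]=\sum_{n\ge1}2^{-n(1+\delta)}e^{-2\cdot 2^{-n}t}$. This is the paper's proof.

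The replacement $a_n=e^{-\sqrt{\lambda_n}}$ fails as stated. Your spacing claim needs gaps $\lambda_{n+1}-\lambda_n=O(1)$, which is false in general. Weyl's law with sharp remainder only gives $\sqrt{\lambda_{n+1}}-\sqrt{\lambda_n}=O(1)$ between distinct eigenvalues, with an $M$-dependent constant, and this is attained on spheres. On the unit-area round $\mathbb S^2$, $\sqrt{\lambda_\ell}\approx\sqrt{4\pi}\,(\ell+\tfrac12)$, so consecutive distinct rates $\lambda e^{-\sqrt\lambda}$ differ by a factor of about $e^{-\sqrt{4\pi}}\approx 0.03$. Since your $a_n$ depends only on $\lambda_n$, no window $[r_k/4,r_k/2]$ contains a rate.

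Two further problems remain even if the spacing held:
\begin{itemize}
\item $r_1$ is fixed by the low spectrum of $M$, so the windows you need may not exist near $t=2$.
\item Granting ratios in $[1/4,1/2]$, your lower-bound constants $4^{-1-\delta}e^{-1}$ or $2^{-1-\delta}e^{-2}$ depend on $\delta$ and drop below $e^{-5}$ for large $\delta$.
\end{itemize}

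With exact rates $2^{-n}$ these issues disappear. For the lower bound, take $n_t=\lfloor\log_2 t\rfloor$, so $2^{-n_t}t\in[1,2]$. The single $n_t$ term then gives $\mathsf E[\overline f_t]\ge \tfrac12 e^{-4}t^{-1-\delta}\ge e^{-5}t^{-1-\delta}$, uniformly in $\delta$ and $M$. For the upper bound with $n<n_t$, bound the two factors separately: $2^{-n(1+\delta)}\le (2/t)^{1+\delta}2^{(n_t-n)(1+\delta)}$ and $e^{-2\cdot2^{-n}t}\le e^{-2\cdot 2^{n_t-n}}$. For $n\ge n_t$, use the geometric tail. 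Note that your step replacing $r_k$ by its lower bound $2^j/t$ inside $r_k^{1+\delta}$ is only valid where $x\mapsto x^{1+\delta}e^{-2x}$ is decreasing.
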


\begin{proof}
Recall the orthonormal basis of Laplacian eigenfunctions $\{\phi_n\}_{n\geq0}$ from \Cref{a:kernel}. Their eigenvalues $0=\lambda_0<\lambda_1 \leq \lambda_2 \leq \cdots $ satisfy the following consequence of Weyl’s law: 
\begin{align}\label{e:Weyl law}
    C^{-1} n^{2/d} \leq \lambda_n \leq C n^{2/d}, \quad n \geq 1,
\end{align}
where $C = C(M) >1$ is a constant and $d=\dim M$.

We first construct the kernel $K$. For any $n \geq 1 $, we set
\begin{align*}
    a_0 = 0, \quad a_n= 2^{-n}\cdot \lambda_{n} ^{-1} , \quad n \geq 1.
\end{align*}
Combining with \eqref{e:Weyl law}, the sequence $\{a_n\}_{n \geq 0}$ decays faster than any negative power of $n$. Therefore the series defining $K$ in \Cref{a:kernel}, i.e.,
    \begin{align*}
            K(x,y) = \sum_{n=1} ^{\infty} a_n \phi_n(x) \phi_n(y),
    \end{align*}
converges in $C^\infty(M\times M)$ by the Bernstein inequalities \Cref{lem:Bernstein inequalities}. By this construction, the kernel $K$ satisfies \Cref{a:kernel}.

We next choose the coefficients of $f_0$. For any $n \geq 1 $, we choose $b_n > 0$ such that
    \begin{align}\label{e:linearized bn choice}
    a_n b_n ^2
    = 2^{-n(1+\delta)},
\end{align}
or equivalently, 
    \begin{align*}
        b_n = \left(a_n 2^{n(1+  \delta)} \right)^{-1/2} = 2^{-n\delta /2} \lambda_n ^{1/2}.
    \end{align*}
Similarly, the sequence $\{b_n\}_{n \geq 0}$ decays faster than any negative power of $n$, and thus
\begin{align*}
    f_0=\sum_{n=1}^\infty b_n \phi_n
\end{align*}
converges in $C^\infty(M)$ and has zero mean.

We now estimate $\mathsf{E}[\overline{f}_t]$. By \eqref{e:linearized energy basis expansion} and \eqref{e:linearized bn choice},
\begin{align*}
    2\mathsf E[\overline{f}_t]
    =
    \sum_{n=1}^\infty
    2^{-n(1+\delta)}
    e^{-2\cdot 2^{-n}t}.
\end{align*}
For any $t\geq 2$, let $n_t \coloneqq \lfloor \log_2 (t) \rfloor \in \bZ_+$, and thus $2^{n_t} \leq t \leq 2 \cdot 2^{n_t}$. We see that
\begin{align*}
    2\mathsf E[\overline{f}_t]
    &\geq
    2^{-n_t(1+\delta)}
    e^{-2\cdot 2^{-n_t}t} \geq t^{-(1+\delta)} e^{-4},
\end{align*}
which gives the lower bound in \Cref{thm:linear-sharp-delta}. For the upper bound, we split $\mathsf{E}[\overline{f}_t]$ into 
\begin{align*}
    2\mathsf{E}[\overline{f}_t]
    =
    \sum_{1\leq n<n_t} 2^{-n(1+\delta)} e^{-2\cdot 2^{-n}t}
    + \sum_{n\geq n_t} 2^{-n(1+\delta)} e^{-2\cdot 2^{-n}t}
     \eqqcolon I_1+I_2
\end{align*}
For $I_1$,
\begin{align*}
    \begin{split}
        I_1 &= 2^{-n_t(1+\delta)}\sum_{1\leq n<n_t} 2^{(n_t-n)(1+\delta)} e^{-2\cdot 2^{-n}t}
        \leq t^{-(1+\delta)} \cdot 2^{1+\delta} \sum_{1\leq n<n_t} 2^{(n_t-n)(1+\delta)} e^{-2\cdot 2^{(n_t-n)}}
        \\  &= t^{-(1+\delta)} \cdot 2^{1+\delta} \sum_{p=1} ^{n_t - 1} 2^{p(1+\delta)} e^{-2 \cdot 2^p} 
        \leq  t^{-(1+\delta)} \cdot 2^{1+\delta} \sum_{k=1} ^{\infty} k^{1+\delta} e^{-2k} \leq t^{-(1+\delta)} \cdot C_{\delta} .
    \end{split}
\end{align*}
For $I_2$,
\begin{align*}
    I_2
    \leq
    \sum_{n\geq n_t} 2^{-n(1+\delta)}
    =
    \frac{2^{-n_t (1+\delta)}}{1-2^{-(1+\delta)}} \leq \frac{2^{(1+\delta)}}{1-2^{-(1+\delta)}} t^{-(1+\delta)}.
\end{align*}
Combining the estimates for $I_1$ and $I_2$, we obtain the upper bound in \Cref{thm:linear-sharp-delta}.

\end{proof}

We next consider an arbitrary kernel satisfying \Cref{a:kernel}. If only finitely many $a_n$ are positive, then \eqref{e:linearized energy basis expansion} is a finite sum of exponentially decaying terms. Slow algebraic decay can therefore occur only when the kernel has infinitely many positive modes.

\begin{theorem}[Sharp $o(t^{-1})$-rate for the linearized flow of arbitrary kernels]\label{thm:linear-sharp-delta subseq}
For any $\delta>0$ and any kernel $K$ satisfying \Cref{a:kernel} with infinitely many positive $a_n$, there exist a $f_0\in H^{-1}(M)$ with zero mean and a sequence of time $\{t_k\}_{k \geq 1} \to \infty$, such that the solution $\overline{f}_t \in H^{-1}(M) $ to the linearized equation \eqref{e:linearized flow} satisfies that
\begin{align*}
    e ^{-3} \cdot 
    t_k ^{-1-\delta} \leq \mathsf E[\overline{f}_{t_k}]
    \leq 2 \cdot 
    t_k ^{-1-\delta} , \quad \forall k \geq 1.
\end{align*}
\end{theorem}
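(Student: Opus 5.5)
The plan is to build $f_0$ from a very sparse set of positive modes, so that at each chosen time $t_k$ exactly one mode dominates the linearized energy \eqref{e:linearized energy basis expansion}. Let $\cG\coloneqq\{n\ge1: a_n>0\}$, which is infinite by assumption, and write $r_n\coloneqq a_n\lambda_n>0$ for $n\in\cG$.

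\emph{Step 1: the rates tend to zero.} First I check that $r_n\to0$ along $\cG$. Since $K\in C^2(M\times M)$, the diagonal expansion gives
\[
\int_M \Delta_x K(x,y)\big|_{y=x}\,\de x=-\sum_{n\ge1}\lambda_n a_n ,
\]
and all terms have one sign. So $\sum_n a_n\lambda_n<\infty$, which forces $r_n\to0$. Consequently I can choose indices $n_1<n_2<\cdots$ in $\cG$ recursively and set $t_k\coloneqq r_{n_k}^{-1}$, with $t_1\ge 1$ and $t_k/t_{k-1}\ge L_k$. The growth factors $L_k\ge2$ will be fixed in Step 3.

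\emph{Step 2: the initial datum and the lower bound.} Define $f_0\coloneqq\sum_k b_{n_k}\phi_{n_k}$, with $b_{n_k}>0$ determined by $a_{n_k}b_{n_k}^2=t_k^{-1-\delta}$; all other coefficients vanish, so $f_0$ has zero mean. Then
\[
\|f_0\|_{H^{-1}}^2=\sum_k \frac{b_{n_k}^2}{\lambda_{n_k}}=\sum_k \frac{t_k^{-1-\delta}}{r_{n_k}}=\sum_k t_k^{-\delta}<\infty ,
\]
because $t_k\ge 2^{k-1}$. By \eqref{e:linearized energy basis expansion},
\[
2\mathsf E[\overline f_t]=\sum_j t_j^{-1-\delta}e^{-2t/t_j}.
\]
Keeping only the $j=k$ term at $t=t_k$ gives $2\mathsf E[\overline f_{t_k}]\ge e^{-2}t_k^{-1-\delta}$. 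Hence $\mathsf E[\overline f_{t_k}]\ge e^{-3}t_k^{-1-\delta}$, since $e^{-2}/2\ge e^{-3}$.

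\emph{Step 3: the upper bound.} This is the main point, and it is handled by tuning the $L_k$. The tail $j>k$ is bounded by dropping the exponential:
\[
\sum_{j>k}t_j^{-1-\delta}\le t_k^{-1-\delta}\sum_{i\ge1}2^{-i(1+\delta)}\le t_k^{-1-\delta}.
\]
For the head $j<k$, put $x_j\coloneqq t_k/t_j\ge t_k/t_{k-1}\ge L_k$. Then
\[
t_j^{-1-\delta}e^{-2t_k/t_j}=t_k^{-1-\delta}\,x_j^{1+\delta}e^{-2x_j}.
\]
Since $x\mapsto x^{1+\delta}e^{-2x}$ is decreasing for large $x$, I choose $L_k$ so large that $L_k^{1+\delta}e^{-2L_k}\le 2^{-k}/k$. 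The head sum is then at most $2^{-k}\,t_k^{-1-\delta}$. The diagonal term contributes at most $t_k^{-1-\delta}$. Altogether $2\mathsf E[\overline f_{t_k}]\le(1+1+1/2)\,t_k^{-1-\delta}$, so $\mathsf E[\overline f_{t_k}]\le 2t_k^{-1-\delta}$.

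\emph{Where the difficulty lies.} The only genuinely delicate point is Step 1: extracting a subsequence of positive modes whose rates $a_n\lambda_n$ decay fast enough. The trace identity from $C^2$-regularity supplies this, and because each $n_k$ is chosen only after $L_k$ is known, the required lacunarity can always be met. The solution $\overline f_t=e^{t\Delta\cK}f_0$ lies in $H^{-1}(M)$ for all $t$, since the semigroup is a contraction mode by mode.
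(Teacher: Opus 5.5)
Your proposal is correct and follows the paper's proof essentially step for step. Both use the sparse datum with $a_{n_k}b_{n_k}^2=t_k^{-1-\delta}$ and times $t_k=(\lambda_{n_k}a_{n_k})^{-1}$, get the lower bound from the diagonal term, and control the head and tail by lacunarity; your explicit growth factors $L_k$ simply replace the paper's two selection conditions on the subsequence $\{n_k\}$.

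One point to tighten is Step 1. The trace identity does not follow directly from the diagonal expansion; it needs Mercer's theorem applied to the continuous, symmetric, positive semidefinite kernel $-\Delta_x K$. The paper avoids this by using Parseval, $\sum_n(\lambda_n a_n)^2=\|\Delta_x K\|_{L^2(M\times M)}^2<\infty$, which already gives $\lambda_n a_n\to0$ and is all you need.
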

\begin{proof}
Since $K\in C^2(M\times M)$, we have $\Delta_xK\in L^2(M\times M)$. Therefore $\sum_{n \geq 1}(\lambda_n a_n)^2 <\infty$. In particular, $\lim_{n \to \infty} \lambda_n a_n=0$. Then, we can inductively choose a subsequence of indices, i.e., $\{n_k\}_{k \geq 1} \subseteq \bZ_+$, such that $a_{n_k} > 0 $ and 
    \begin{align}\label{e:kernel subindices 1}
           (\lambda_{n_{k+1}} a_{n_{k+1}})^{1+\delta} \leq \frac{1}{4} (\lambda_{n_k} a_{n_k})^{1+\delta}
    \end{align}
and 
    \begin{align}\label{e:kernel subindices 2}
                \sum_{p<k}
        (\lambda_{n_p} a_{n_p})^{1+\delta}
        \exp\left(-2\frac{\lambda_{n_p} a_{n_p}}{\lambda_{n_k} a_{n_k}}\right)
        \leq (\lambda_{n_k} a_{n_k})^{1+\delta}.
    \end{align}
\eqref{e:kernel subindices 1} follows easily as $\lim_{n \to \infty} \lambda_n a_n=0$. For \eqref{e:kernel subindices 2}, we need to use the fact that the function 
    \begin{align*}
        h(s) \coloneqq \sum_{p<k}
        (\lambda_{n_p} a_{n_p})^{1+\delta}
        \exp\left(-2\frac{\lambda_{n_p} a_{n_p}}{s}\right)
    \end{align*}
tends to $0$ exponentially fast as $s \to 0^+$ and thus faster than $s^{1+\delta}$.

Now, define $f_0$ by setting $b_n=0$ unless $n=n_k$. When $n = n_k$, choose $b_{n_k}$ so that
\begin{align*}
     a_{n_k} b_{n_k} ^2
    =
    (\lambda_{n_k} a_{n_k})^{1+\delta}.
\end{align*}
First,
\begin{align*}
    f_0
    =
    \sum_{k= 1} ^{\infty} b_{n_k}\phi_{n_k}
\end{align*}
has zero mean because every $n_k \geq 1$. We next check that $f_0\in  H^{-1}(M)$. Indeed, by \eqref{e:kernel subindices 1}, the sequence $\{\lambda_{n_k} a_{n_k}\}_{k \geq 1}$ decays geometrically, and thus for some constant $C_{\delta}$ only depending on $\delta$, 
\begin{align*}
    \|f_0\|_{ H^{-1}}^2
    =
    \sum_{k= 1} ^{\infty}
    \lambda_{n_k}^{-1} b_{n_k} ^2 =
    \sum_{k= 1} ^{\infty}(\lambda_{n_k} a_{n_k})^{\delta} \leq (\lambda_{n_1} a_{n_1})^{\delta} C_{\delta} < \infty.
\end{align*}

For any $k \geq 1$, we then set
\begin{align*}
    t_k\coloneqq (\lambda_{n_k} a_{n_k})^{-1}.
\end{align*}
At time $t_k$, the energy in \eqref{e:linearized energy} becomes
\begin{align*}
    2\mathsf E[\overline{f}_{t_k}] =
    \sum_{p= 1} ^{\infty}
   (\lambda_{n_p} a_{n_p})^{1+\delta}
    \exp\left(-2\frac{\lambda_{n_p} a_{n_p}}{\lambda_{n_k} a_{n_k}}\right).
\end{align*}
For the lower bound in \Cref{thm:linear-sharp-delta subseq}, 
\begin{align*}
    2\mathsf E[\overline{f}_{t_k}] 
    \geq (\lambda_{n_k} a_{n_k})^{1+\delta} e^{-2} = t_k ^{-(1+\delta)} e^{-2}.
\end{align*}
For the upper bound in \Cref{thm:linear-sharp-delta subseq}, we split the sum into two parts:
\begin{align*}
    2\mathsf E[\overline{f}_{t_k}]
    =
    \left(\sum_{p< k} +  \sum_{p \geq k} \right)
   (\lambda_{n_p} a_{n_p})^{1+\delta}
    \exp\left(-2\frac{\lambda_{n_p} a_{n_p}}{\lambda_{n_k} a_{n_k}}\right) \eqqcolon I_1+I_2.
\end{align*}
For $I_1$, we make use of the choice of $\{n_k\}$, as $I_1$ is exactly the term on the left-hand side of \eqref{e:kernel subindices 2}. Hence, 
\begin{align*}
    I_1 \leq (\lambda_{n_k} a_{n_k})^{1+\delta} = t_k ^{-(1+\delta)}
\end{align*}
For $I_2$, using \eqref{e:kernel subindices 1} and the fact that $\exp\left(-2\frac{\lambda_{n_p} a_{n_p}}{\lambda_{n_k} a_{n_k}}\right) \leq 1$ when $p \geq k$, we get
\begin{align*}
    I_2 \leq  \sum_{p \geq k} 
   (\lambda_{n_p} a_{n_p})^{1+\delta} \leq (\lambda_{n_k} a_{n_k})^{1+\delta} \sum_{q=0} ^{\infty} 4^{-q} = \frac{4}{3} t_k ^{-(1+\delta)}.
\end{align*}
Combining the estimates for $I_1$ and $I_2$, we obtain the upper bound in \Cref{thm:linear-sharp-delta subseq}.
\end{proof}

The nonlinear $o(t^{-1})$ estimate in \Cref{thm: intro global convergence} is established for initial densities in $L^p(M)$ with $p>1$, whereas the initial datum constructed in \Cref{thm:linear-sharp-delta subseq} generally belongs only to $H^{-1}(M)$. It is therefore natural to ask whether there exists $f_0\in H^{-1}(M)$ for which the corresponding solution $\overline{f}_t$ of \eqref{e:linearized flow} fails to satisfy the $o(t^{-1})$ rate. The next proposition gives a negative answer. This conclusion can also be deduced from the general Hilbert-space gradient flow result in \cite{siegel2024qualitative}, since \eqref{e:linearized flow} can also be regarded as the $H^{-1}(M)$-gradient flow of the convex quadratic functional $\mathsf E$. We include a direct spectral proof adapted to the present setting.

\begin{proposition}[$o(t^{-1})$-rate for $H^{-1}$ initial data]\label{prop:H^-1 initial o(t^-1) rate}
    For any kernel $K$ satisfying \Cref{a:kernel} and any $f_0 \in H^{-1}(M)$ with zero mean, the solution $\overline{f}_t$ to the linearized equation \eqref{e:linearized flow} satisfies that, as $t \to \infty$, 
        \begin{align*}
            \mathsf{E}[\overline{f}_t] = o(t^{-1}).
        \end{align*}
\end{proposition}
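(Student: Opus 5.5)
We work directly with the spectral formula \eqref{e:linearized energy basis expansion},
\begin{align*}
2\mathsf E[\overline f_t]=\sum_{n\ge1}a_nb_n^2e^{-2a_n\lambda_nt},
\end{align*}
where now $b_n=\langle f_0,\phi_n\rangle$ is understood in the $H^{-1}$--$H^1$ duality. The assumption $f_0\in H^{-1}(M)$ means $\sum_{n\ge1}\lambda_n^{-1}b_n^2<\infty$. The idea is to rewrite each term as
\begin{align*}
a_nb_n^2e^{-2a_n\lambda_nt}=\lambda_n^{-1}b_n^2\cdot\bigl(a_n\lambda_n e^{-2a_n\lambda_nt}\bigr),
\end{align*}
so that the $H^{-1}$ weights $w_n\coloneqq\lambda_n^{-1}b_n^2$ form a summable sequence. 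The remaining factor satisfies the elementary bound $xe^{-2xt}\le (2et)^{-1}$ for $x\ge0$.

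First, this gives the uniform bound $t\cdot a_nb_n^2e^{-2a_n\lambda_nt}\le w_n/(2e)$ for every $n$ and every $t>0$. Second, for each fixed $n$, we have $t\,a_n\lambda_ne^{-2a_n\lambda_nt}\to0$ as $t\to\infty$. When $a_n>0$ this follows from exponential decay, and when $a_n=0$ the term vanishes identically. Hence
\begin{align*}
2t\,\mathsf E[\overline f_t]=\sum_{n\ge1}w_n\,\bigl(t a_n\lambda_ne^{-2a_n\lambda_nt}\bigr),
\end{align*}
where every summand tends to $0$ and is dominated by $w_n/(2e)$ with $\sum_n w_n=\|f_0\|_{H^{-1}}^2<\infty$. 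Dominated convergence for series then yields $t\,\mathsf E[\overline f_t]\to0$, which is the claim.

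Alternatively, one can split the sum at a fixed index $N$. The tail is bounded by $(2et)^{-1}\sum_{n>N}w_n$, which can be made at most $\varepsilon/t$ by choosing $N$ large. The finitely many remaining terms decay exponentially, or vanish if $a_n=0$, so they are $o(t^{-1})$.

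The only point requiring care is making sense of the solution and the energy for distributional data. We define $\overline f_t$ by \eqref{e:linearized basis expansion} with $b_n$ given by the duality pairing. For $t>0$, the energy series converges by the same bound, so no regularity of $f_0$ beyond $H^{-1}$ is needed. I expect no real obstacle beyond this bookkeeping. The argument never uses $a_n\lambda_n\to0$, nor the size of $\lambda_1$. It only uses the nonnegativity $a_n\ge0$ from \Cref{a:kernel}, which guarantees that each factor $e^{-2a_n\lambda_nt}\le1$ and that the function $x\mapsto xe^{-2xt}$ is evaluated only at $x\ge0$.
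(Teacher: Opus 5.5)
Your proof is correct, but it follows a different route from the paper's. The paper integrates the energy in time term by term, $\int_0^\infty \mathsf E[\overline f_s]\,\de s=\frac14\sum_{a_n>0}\lambda_n^{-1}b_n^2\le\frac14\|f_0\|_{H^{-1}}^2$. It then uses that $t\mapsto\mathsf E[\overline f_t]$ is nonincreasing to get $t\,\mathsf E[\overline f_t]\le 2\int_{t/2}^\infty \mathsf E[\overline f_s]\,\de s\to0$. This is the same integrability-plus-monotonicity mechanism as in the proof of \Cref{thm: intro global convergence}. In effect, $\frac12\|\overline f_t\|_{H^{-1}}^2$ plays the role of the entropy there, since its time derivative is exactly $-2\mathsf E[\overline f_t]$. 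You instead bound each mode pointwise in time via $xe^{-2xt}\le(2et)^{-1}$ and conclude by dominated convergence over $n$. Your route never uses monotonicity in time. It also gives at once the explicit uniform bound $t\,\mathsf E[\overline f_t]\le\frac{1}{4e}\|f_0\|_{H^{-1}}^2$ for all $t>0$, slightly sharper than the $\frac12\|f_0\|_{H^{-1}}^2$ that the paper's argument yields. The cost is that it depends on the explicit diagonal form of the semigroup. The paper's argument only needs a dissipation identity and a nonnegative Lyapunov functional, which is why it parallels the nonlinear entropy proof and the abstract Hilbert-space gradient-flow result mentioned before the proposition.
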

\begin{proof}
    Using \eqref{e:linearized energy}, we see that
        \begin{align*}
            \int_0 ^{\infty} \mathsf{E}[\overline{f}_s] \,\de s = \frac{1}{2}\sum_{ n = 1} ^{\infty} a_n b_n ^2 \int_0 ^{\infty} e^{-2 a_n \lambda_n s} \,\de s  = \frac{1}{4} \sum_{ a_n > 0} \lambda_n ^{-1} b_n ^2 \leq \frac{1}{4} \|f_0\|_{ H^{-1}}^2.
        \end{align*}
    Since $\mathsf{E}[\overline{f}_t]$ is nonincreasing in $t$ by \eqref{e:linearized energy basis expansion}, we have that
        \begin{align*}
            t \mathsf{E}[\overline{f}_t] \leq 2 \int_{t/2} ^{t} \mathsf{E}[\overline{f}_s] \,\de s \leq 2 \int_{t/2} ^{\infty} \mathsf{E}[\overline{f}_s] \,\de s \to 0, \quad \text{ as } t \to \infty.
        \end{align*}
\end{proof}

\Cref{prop:H^-1 initial o(t^-1) rate} gives the universal $o(t^{-1})$ upper bound for every kernel $K$ satisfying \Cref{a:kernel} and every mean-zero $f_0 \in H^{-1}(M)$. For a general kernel, however, the two-sided estimate in \Cref{thm:linear-sharp-delta subseq} cannot be extended from a sequence of times to all sufficiently large times.
\begin{theorem}[Failure of uniform-in-time polynomial asymptotics]\label{thm:lacunary no uniform rate}
There exists a kernel
$K\in C^\infty(M\times M)$ satisfying \Cref{a:kernel} such that, for every
mean-zero $f_0\in H^{-1}(M)$ and every $\beta>0$, the solution $\overline{f}_t$ of
\eqref{e:linearized flow} satisfies:
\begin{align*}\limsup_{t\to\infty} \, t^{\beta} \cdot \mathsf E[\overline{f}_t]<\infty
    \quad\Longrightarrow\quad
    \liminf_{t\to\infty} \, t^{5\beta/4} \cdot \mathsf E[\overline{f}_t]=0.
\end{align*}
\end{theorem}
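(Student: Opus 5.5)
The plan is to choose the spectral rates $r_n\coloneqq a_n\lambda_n$ \emph{super-lacunary}. Then between two consecutive time scales $r_n^{-1}$ and $r_{n+1}^{-1}$ there is a long window in which every mode $m\le n$ is already exponentially dead, while every mode $m\ge n+1$ has barely started to decay. In such a window $\mathsf E[\overline f_t]$ is essentially frozen at the tail value. The assumed upper bound $O(t^{-\beta})$, evaluated at the end of the window, then forces a much smaller value than $t^{-5\beta/4}$ at an interior time of the window.

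\emph{Construction of the kernel.} Set $s_n\coloneqq 2^{4^n}$, so that $s_{n+1}=s_n^4$. Put $a_0=0$ and
\[
a_n=\lambda_n^{-1}s_n^{-1}\quad (n\ge1),\qquad\text{so that}\quad r_n=s_n^{-1}.
\]
By the Weyl bound \eqref{e:Weyl law}, $a_n$ decays faster than any power of $n$. Exactly as in the proof of \Cref{thm:linear-sharp-delta}, the Bernstein inequalities (\Cref{lem:Bernstein inequalities}) give $K=\sum_{n\ge1}a_n\phi_n\otimes\phi_n\in C^\infty(M\times M)$, and \Cref{a:kernel} holds.

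\emph{The two estimates.} Fix a mean-zero $f_0\in H^{-1}(M)$ with coefficients $b_n$, and use \eqref{e:linearized energy basis expansion}:
\[
2\mathsf E[\overline f_t]=\sum_{m\ge1}a_mb_m^2e^{-2r_mt}.
\]
Note that $2\mathsf E[\overline f_0]=\sum_m s_m^{-1}\lambda_m^{-1}b_m^2\le\|f_0\|_{H^{-1}}^2<\infty$. Let $T_{n+1}\coloneqq\sum_{m\ge n+1}a_mb_m^2$ denote the tail.

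First, evaluate at $\tau_{n+1}\coloneqq s_{n+1}$. Since $r_m\le r_{n+1}$ for $m\ge n+1$, each such term carries a factor $e^{-2r_m\tau_{n+1}}\ge e^{-2}$. Hence
\[
T_{n+1}\le 2e^{2}\,\mathsf E[\overline f_{s_{n+1}}]\le C e^{2}s_{n+1}^{-\beta}
\]
for large $n$, by the hypothesis $\limsup_{t\to\infty} t^\beta\mathsf E[\overline f_t]<\infty$.

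Second, evaluate at the interior time $t_n\coloneqq s_{n+1}^{1/2}=s_n^2$. Split the sum at $m=n$. For $m\le n$ we have $r_m t_n\ge r_nt_n=s_n$. For $m\ge n+1$ we bound the exponential by $1$. This gives
\[
2\mathsf E[\overline f_{t_n}]\le 2\mathsf E[\overline f_0]\,e^{-2s_n}+Ce^2s_{n+1}^{-\beta}.
\]

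\emph{Conclusion.} Multiply by $t_n^{5\beta/4}=s_n^{5\beta/2}=s_{n+1}^{5\beta/8}$. The first term becomes $O(s_n^{5\beta/2}e^{-2s_n})\to0$. The second becomes $O(s_{n+1}^{-3\beta/8})\to0$. Since $t_n\to\infty$, this gives $\liminf_{t\to\infty}t^{5\beta/4}\mathsf E[\overline f_t]=0$. The argument works for every $\beta>0$ and every $f_0$ simultaneously, because the kernel does not depend on either.

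\emph{Main obstacle.} The only real difficulty is the calibration of the lacunarity. The window must be wide enough that $r_nt_n\to\infty$ faster than logarithmically in $t_n$; this is what kills $t_n^{5\beta/4}e^{-2r_nt_n}$ uniformly in $\beta$. At the same time, $t_n$ must be a small enough power of $s_{n+1}$ that $t_n^{5\beta/4}s_{n+1}^{-\beta}\to0$. The double-exponential choice $s_{n+1}=s_n^4$ with $t_n=s_{n+1}^{1/2}$ meets both requirements. The same scheme works with any exponent strictly below $2$ in place of $5/4$; the value $5/4$ is just a convenient choice.
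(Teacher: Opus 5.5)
Your argument is correct and is essentially the paper's: both make the rates $a_n\lambda_n$ double-exponentially lacunary (the paper takes $a_n=\lambda_n^{-1}e^{-2^n}$, so $t_{n+1}=t_n^2$, and evaluates at the intermediate times $t_n^{3/2}$). Both then use the assumed $O(t^{-\beta})$ bound at the spectral time scales to control the modes $m\ge n+1$, and kill the modes $m\le n$ exponentially at an interior time. Your bookkeeping is slightly leaner: you bound the whole tail $T_{n+1}$ by a single evaluation at $s_{n+1}$ and the low modes by $2\mathsf E[\overline f_0]\le\|f_0\|_{H^{-1}}^2$, whereas the paper bounds each coefficient $a_jb_j^2\lesssim t_j^{-\beta}$ separately, sums a geometric series, and uses the monotonicity of $s\mapsto s^{\beta}e^{-2s}$.
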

\begin{proof}
Let the spectral coefficients of $K$ be
\begin{align}\label{e:universal lacunary kernel}
    a_0=0,
    \qquad
    a_n=\lambda_n^{-1}e^{-2^n},
    \quad n\geq1.
\end{align}
Equivalently,
\begin{align*}
    K(x,y)
    =
    \sum_{n=1}^{\infty}
    \lambda_n^{-1}e^{-2^n}\phi_n(x)\phi_n(y).
\end{align*}
By an argument similar to that in the proof of \Cref{thm:linear-sharp-delta}, $K\in C^\infty(M\times M)$ and satisfies
\Cref{a:kernel}.

Let
    \begin{align}\label{e:t_n time}
        t_n=e^{2^n}, \, n \geq 1 .
    \end{align}
Then $t_{n+1}=t_n^2$. For a mean-zero $f_0\in H^{-1}(M)$ and the corresponding solution of \eqref{e:linearized flow}, write
\begin{align*}
    f_0=\sum_{n=1}^{\infty}b_n\phi_n , \qquad \overline{f}_t
    =
    \sum_{n=1}^{\infty}
    b_ne^{-t_n ^{-1} t}\phi_n.
\end{align*}
Consequently,
\begin{align}\label{e:universal lacunary energy}
    \mathsf E[\overline{f}_t]
    =
    \frac{1}{2} \sum_{n=1}^{\infty} c_n  e^{-2t_n^{-1}t}, \quad \text{ where } c_n = a_n b_n^2.
\end{align}
Fix a $\beta >0$ and suppose that $\limsup_{t\to\infty}t^{\beta} \mathsf E[\overline{f}_t]<\infty$. 
Then there exists a constant $C>1$ such that
\begin{align}\label{e:lacunary assumed upper}
    \mathsf E[\overline{f}_t]\leq C(t+1)^{-\beta},
    \qquad
    \forall t\geq 0.
\end{align}
Evaluating \eqref{e:universal lacunary energy} at
$t=t_n$ and keeping only the $n$-th term, we obtain
\begin{align}\label{e:lacunary coefficient upper}
    \frac{1}{2} c_ne^{-2}
    \leq
    \mathsf E[\overline{f}_{t_n}]
    \leq
    C(t_n+1)^{-\beta}
    \leq 
    C t_n ^{-\beta} , \quad \forall n \geq 1 .
\end{align}

We now consider the intermediate times $s_n = \sqrt{t_n t_{n+1}} = t_n^{3/2}$. We claim that
\begin{align}\label{e:lacunary universal intermediate decay}
    \lim_{n\to\infty}s_n^{5\beta/4}\mathsf E[\overline{f}_{s_n}]=0.
\end{align}
Indeed, by \eqref{e:universal lacunary energy},
\begin{align}\label{e:lacunary universal normalized energy}
    2s_n^{5\beta/4}\mathsf E[\overline{f}_{s_n}]
    =
    \sum_{j=1}^{\infty}
    c_j s_n^{5\beta/4} e^{-2t_j ^{-1} s_n} .
\end{align}
We split the sum in \eqref{e:lacunary universal normalized energy} into $j\leq n$ and $j\geq n+1$. For $j \leq n$, because the function $s \mapsto s^{\beta} e^{-2s}$ is decreasing for $s \geq \beta/2$ and $t_j^{-1} s_n \geq t_n^{-1} s_n = t_n^{1/2} \to \infty$, 
it follows from \eqref{e:lacunary coefficient upper} that, for all
sufficiently large $n$,
    \begin{align}\label{e:lacunary intermediate energy j leq n}
        \begin{split}
            &\sum_{j \leq n}
    c_j s_n^{5\beta/4} e^{-2t_j ^{-1} s_n} 
    \leq 
    e^3 C \cdot s_n^{\beta/4}\sum_{j \leq n}
     (t_j^{-1} s_n)^{\beta} e^{-2t_j ^{-1} s_n} 
     \\ &\leq e^3 C \cdot s_n^{\beta/4} \cdot n (t_n^{-1} s_n)^{\beta} e^{-2 t_n ^{-1} s_n} =
    e^3 C \cdot n  \cdot t_n ^{7\beta/8} e^{-2 t_n^{1/2} },
        \end{split}
\end{align}
which tends to $0$ as $n \to \infty$ by the choice of $t_n$ in \eqref{e:t_n time}. 
For $j \geq n+1$, by \eqref{e:lacunary coefficient upper} and
the fact that $e^{-2t_j ^{-1} s_n}\leq1$,
\begin{align*}
    \sum_{j\geq n+1}
    c_js_n^{5\beta/4} e^{-2t_j ^{-1} s_n}
    \leq
    e^3 C \cdot s_n^{5\beta/4} \sum_{j\geq n+1}t_j^{-\beta}.
\end{align*}
By \eqref{e:t_n time}, we have that $t_{j+1} = t_{j}^2 \geq  t_j t_{n+1} $.
Therefore, 
\begin{align}\label{e:lacunary intermediate energy j geq n+1}
    \sum_{j\geq n+1}
    c_js_n^{5\beta/4} e^{-2t_j ^{-1} s_n}
    \leq
    e^3 C \cdot \frac{(t_{n+1}^{-1}s_n ^{5/4})^{\beta}}{1-t_{n+1} ^{-\beta}} \leq e^3 C \cdot \frac{(t_{n+1}^{-1}s_n ^{5/4})^{\beta}}{1-e ^{-\beta}} =  e^3 C \cdot \frac{(t_n)^{-\beta/8}}{1-e ^{-\beta}},
\end{align}
which tends to $0$ as $n \to \infty$. 
Here the second inequality follows from $t_{n+1} \geq e$. 

\eqref{e:lacunary intermediate energy j leq n} and \eqref{e:lacunary intermediate energy j geq n+1} together prove
\eqref{e:lacunary universal intermediate decay}. Consequently, $\liminf_{t\to\infty}t^{5\beta/4}\mathsf E[\overline{f}_t]=0$.
\end{proof}


For the inverse fractional Laplacian kernels $ \cK=(-\Delta)^{-s}\circ\Pi_0$ introduced in \Cref{s:inverse laplacian kernel}, greater regularity of the initial datum leads to faster decay of $\mathsf E[\overline f_t]$; see \cite[Theorem~1.4]{chizat2026quantitative}. By contrast, a mean-zero signed measure outside $H^{-1}(M)$ can yield decay strictly slower than $t^{-1}$. We demonstrate this for $s > d/2$ and $d \geq 3$ by taking $f_0=\delta_{x_0}-\sigma$, where $\delta_{x_0}$ is the Dirac measure at an arbitrary point $x_0 \in M$.

\begin{proposition}\label{prop:linearized equation slower t^-1 rate}
    For any $d \geq 2$ and any $x_0 \in M$, the zero-mean signed measure $f_0 =\delta_{x_0} - \sigma \notin H^{-1}(M)$. Moreover, if $\cK = (-\Delta)^{-s} \circ \Pi_0$ for some $s>d/2$ as in \Cref{s:inverse laplacian kernel}, the solution $\overline{f}_t$ to the linearized equation \eqref{e:linearized flow} satisfies that as $t \to \infty$,
        \begin{align*}
    C_{d,s}^{-1} \cdot 
    t^{-\beta} \leq \mathsf E[\overline{f}_t]
    \leq C_{d,s} \cdot 
    t^{-\beta} .
\end{align*}
Here, $C_{d,s} >1$ is a constant depending only on $d$ and $s$, and $\beta \coloneqq \frac{s-d/2}{s-1}$, which is less than $1$ if $d \geq 3$.
\end{proposition}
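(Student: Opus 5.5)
We will work entirely in the Laplace eigenbasis. The coefficients of $f_0=\delta_{x_0}-\sigma$ are $b_n=\phi_n(x_0)$ for $n\ge1$. Since $a_n=\lambda_n^{-s}$, we have $a_n\lambda_n=\lambda_n^{1-s}$, and \eqref{e:linearized energy basis expansion} gives
\begin{align*}
2\mathsf E[\overline f_t]=\sum_{n\ge1}\lambda_n^{-s}\phi_n(x_0)^2e^{-2\lambda_n^{1-s}t}.
\end{align*}
The key input is the pointwise (local) Weyl law, i.e.\ the spectral-function asymptotics on a closed manifold:
\begin{align*}
N_{x_0}(\Lambda)\coloneqq\sum_{0<\lambda_n\le\Lambda}\phi_n(x_0)^2=c_d\Lambda^{d/2}+O(\Lambda^{(d-1)/2}),
\end{align*}
uniformly in $x_0$ (H\"ormander). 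From it we only need the two-sided dyadic bound
\begin{align*}
\sum_{\Lambda<\lambda_n\le 2^2\Lambda}\phi_n(x_0)^2\asymp\Lambda^{d/2}\qquad\text{for }\Lambda\ge\Lambda_0.
\end{align*}
The upper bound already follows from the Bernstein-type estimates of \Cref{lem:Bernstein inequalities}. The lower bound needs the local Weyl law; this is the one external ingredient, and I expect it to be the main point needing care, since pointwise (not averaged) control at $x_0$ is required. One fallback would be to average the dyadic sums over $x_0$ using $\int\phi_n^2=1$ together with \eqref{e:Weyl law}, but that does not give a statement for every $x_0$. So I would cite the local Weyl law directly.

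\emph{Non-membership in $H^{-1}$.} We have $\|f_0\|_{H^{-1}}^2=\sum_n\lambda_n^{-1}\phi_n(x_0)^2$. Summing dyadically over blocks $\lambda_n\sim 2^{2k}$, the $k$-th block contributes $\asymp 2^{-2k}2^{kd}$. For $d\ge2$ this is bounded below by a positive constant for every $k$, so the series diverges. Alternatively, one can argue by duality: $H^1\not\hookrightarrow C$ for $d\ge2$, so $\delta_{x_0}$ is not a bounded functional on $H^1$.

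\emph{Two-sided bound for the energy.} Group the energy into the same dyadic blocks $B_k=\{n:2^{2k}<\lambda_n\le2^{2k+2}\}$, with $k\ge k_0$, plus finitely many low modes, which decay exponentially and are negligible. On $B_k$ we have $\lambda_n^{-s}\asymp2^{-2sk}$ and $\lambda_n^{1-s}\asymp2^{-2(s-1)k}$, so
\begin{align*}
2\mathsf E[\overline f_t]\asymp\sum_{k}2^{k(d-2s)}\exp\bigl(-c_\pm2^{-2(s-1)k}t\bigr),
\end{align*}
with different constants $c_\pm$ in the upper and lower bounds. Note that $d-2s<0$ because $s>d/2$, so the sum converges at $t=0$, consistent with $\mathsf E[f_0]<\infty$. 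Let $k_t$ be the index with $2^{2(s-1)k_t}\sim t$.

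For the \textbf{lower bound}, keep the single block $k_t$. This gives $\gtrsim 2^{k_t(d-2s)}\asymp t^{-(2s-d)/(2(s-1))}=t^{-\beta}$.

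For the \textbf{upper bound}, split as in the proof of \Cref{thm:linear-sharp-delta}. The blocks with $k\ge k_t$ form a geometric series dominated by the term at $k_t$, since $d-2s<0$. The blocks with $k<k_t$ carry the factor $\exp(-c\,2^{2(s-1)(k_t-k)})$, which is superexponentially small in $k_t-k$, while the prefactor $2^{(k_t-k)(2s-d)}$ grows only geometrically. The resulting sum is therefore bounded by a constant times $2^{k_t(d-2s)}$. All constants depend only on $d$, $s$ and the Weyl constants, which gives $\mathsf E[\overline f_t]\asymp t^{-\beta}$.

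Finally, $\beta=\frac{s-d/2}{s-1}<1$ if and only if $d/2>1$, that is, $d\ge3$.
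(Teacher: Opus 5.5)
Your proposal is correct and follows essentially the same route as the paper. Both arguments write $b_n=\phi_n(x_0)$, use the local Weyl law to get two-sided dyadic bounds on $\sum\phi_n(x_0)^2$, show the dyadic $H^{-1}$ sum diverges for $d\ge2$, and then estimate the energy by keeping a single block at $\lambda\sim t^{1/(s-1)}$ for the lower bound and splitting low/high around that scale for the upper bound. The only differences are cosmetic: your windows are $(\Lambda,4\Lambda]$ rather than $[\Lambda,2\Lambda]$, and you remark that the upper dyadic bound also follows from Bernstein. Neither changes the argument.
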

\begin{proof}
By \Cref{s:inverse laplacian kernel}, $a_n=\lambda_n^{-s}$ for $n\geq1$. For the signed measure $f_0=\delta_{x_0}-\sigma$, we have that $b_n= \int_M f_0 \phi_n \,\de x = \phi_n(x_0)$ for $n \geq 1$.
According to \eqref{e:linearized basis expansion} and \eqref{e:linearized energy basis expansion},
\begin{align*}
    \overline{f}_t
    =
    \sum_{n=1}^{\infty}
    e^{-\lambda_n ^{1-s} t}\phi_n(x_0)\phi_n,
\end{align*}
and 
\begin{align}\label{e:signed-measure-energy}
    \mathsf E[\overline{f}_t]
    =
    \frac{1}{2}
    \sum_{n=1}^{\infty}
    \lambda_n^{-s}
     |\phi_n  (x_0) |^2
    e^{-2\lambda_n^{1-s} t}.
\end{align}

To proceed, we need the local Weyl law (\cite{hormander1968spectral,shubin1987pseudodifferential,jakobson2007estimates}) for the spectral function. Namely, as $\Lambda \to\infty$,
\begin{align}\label{e:local-weyl-law}
    \sum_{\lambda_n\leq \Lambda}|\phi_n(x)|^2
    =
    C_d \Lambda^{d/2}
    +
    O(\Lambda^{(d-1)/2}),
\end{align}
uniformly for $x\in M$, for some constant $C_d>0$ only depending on the dimension $d$. From \eqref{e:local-weyl-law}, there are constants $C_d >1$, $\Lambda_M >1$, such that when $\Lambda > \Lambda_M$,
\begin{align}\label{e:dyadic-local-weyl}
    C_d ^{-1} \Lambda^{d/2}
    \leq
    \sum_{\Lambda \leq \lambda_n\leq 2 \Lambda}|\phi_n(x_0)|^2
    \le
    C_d  \Lambda ^{d/2}.
\end{align}
Clearly, $f_0 \notin H^{-1}(M)$, because by \eqref{e:dyadic-local-weyl},
    \begin{align*}
        \begin{split}
               &\|f_0\|_{H^{-1}} ^2 =\sum_{n=1} ^{\infty} \lambda_n^{-1}
     |\phi_n  (x_0) |^2 \geq \sum_{k =0} ^{\infty} \sum_{\lambda_n  \in [2^{k}\Lambda_M , 2^{k+1}\Lambda_M)} \lambda_n^{-1}
     |\phi_n  (x_0) |^2 
     \\ &\geq \sum_{k =0} ^{\infty} 2^{-k-1} \Lambda_M ^{-1} C_d^{-1} \left( 2^{k}\Lambda_M\right)^{d/2} = \Lambda_M ^{d/2 -1} (2C_d) ^{-1} \sum_{k=0} ^{\infty} \left( 2^{d/2-1} \right)^k = \infty,
        \end{split}
    \end{align*}
when $d \geq 2$.

We now estimate $\mathsf E[\overline{f}_t]$ from \eqref{e:signed-measure-energy}. For any $t > \Lambda_M ^{(s-1)}$, we set 
    \begin{align*}
        \Lambda_t \coloneqq t^{1/(s-1)},
    \end{align*}
so that $\Lambda_t > \Lambda_M$. For the lower bound in \Cref{prop:linearized equation slower t^-1 rate},  we see that when $\lambda_n \in [\Lambda_t, 2\Lambda_t] $, $t \lambda_n ^{1-s} \leq t \Lambda_t ^{1-s} = 1$. Thus,
\begin{align*}
\begin{aligned}
    2\mathsf E[\overline{f}_t]
    &\geq 
    \sum_{\Lambda_t \leq \lambda_n\leq 2\Lambda_t}
    \lambda_n^{-s}
    |\phi_n(x_0)|^2
    e^{-2t\lambda_n^{1-s}}   \geq
    2^{-s} \Lambda_t ^{-s} e^{-2}
    \sum_{\Lambda_t \leq \lambda_n\leq 2\Lambda_t}
    |\phi_n(x_0)|^2             \\
    &\overset{\eqref{e:dyadic-local-weyl}}{\geq} C_{d,s} ^{-1}
     \Lambda_t ^{d/2-s}      = C_{d,s} ^{-1} t^{-\beta}.
\end{aligned}
\end{align*}
For the upper bound in \Cref{prop:linearized equation slower t^-1 rate}, we again split the sum \eqref{e:signed-measure-energy} into two parts:
    \begin{align*}
        2\mathsf E[\overline{f}_t] = \left(\sum_{\lambda_n < \Lambda_t} +  \sum_{\lambda_n \geq \Lambda_t} \right) \lambda_n^{-s}
     |\phi_n  (x_0) |^2
    e^{-2\lambda_n^{1-s} t} \eqqcolon I_1 + I_2.
    \end{align*}
In $I_1$, those terms $\lambda_n \leq \Lambda_M$ decay exponentially fast with a rate at least $e^{-2\Lambda_M ^{1-s}t}$, which are negligible. Thus,
    \begin{align*}
        \begin{split}
            &I_1 \leq \sum_{p=0} ^{\infty} \sum_{\lambda_n \in [2^{-p-1}\Lambda_t, \ 2^{-p}\Lambda_t]} \lambda_n^{-s}
     |\phi_n  (x_0) |^2
    e^{-2\lambda_n^{1-s} t} 
    \\ &\overset{\eqref{e:dyadic-local-weyl}}{\leq} C_M e^{-2\Lambda_M ^{1-s}t}+  C_{d,s} \Lambda_t ^{-s}\sum_{p=0} ^{\infty} 2^{p s} e^{-2 \cdot 2^{p(s-1)}} \left( 2^{-p}\Lambda_t\right)^{d/2}
    \\  &=C_M e^{-2\Lambda_M ^{1-s}t}+  C_{d,s} \Lambda_t ^{d/2-s}\sum_{p=0} ^{\infty} 2^{p (s-d/2)} e^{-2 \cdot 2^{p(s-1)}} 
    \\  &\leq C_M e^{-2\Lambda_M ^{1-s}t}+  C_{d,s} \Lambda_t ^{d/2-s}\sum_{k=0} ^{\infty} k^{(s-d/2)} e^{-2 \cdot k^{(s-1)}}
    \\ &= C_M e^{-2\Lambda_M ^{1-s}t}+  C_{d,s} t^{-\beta},
        \end{split}
    \end{align*}
where in the last equality, we used the fact that when $s >d/2 \geq 1$, the series $\sum_{k=0} ^{\infty} k^{(s-d/2)} e^{-2 \cdot k^{(s-1)}}$ is convergent and equals a constant $C_{d,s}$ only depending on $d,s$. For $I_2$, we notice the fact that $e^{-2\lambda_n^{1-s} t} \leq 1$, and similarly obtain
    \begin{align*}
    I_2 \leq \sum_{p=0} ^{\infty} \sum_{\lambda_n \in [2^{p}\Lambda_t, \ 2^{p+1}\Lambda_t]} \lambda_n^{-s}
     |\phi_n  (x_0) |^2
    e^{-2\lambda_n^{1-s} t}
    \overset{\eqref{e:dyadic-local-weyl}}{\leq} C_{d,s} \Lambda_t ^{-s} \sum_{p=0} ^{\infty} 2^{-ps}  \left( 2^{p}\Lambda_t\right)^{d/2} = C_{d,s} t^{-\beta},
    \end{align*}
where in the last equality, we used the fact that when $s > d/2$, the series $\sum_{p=0} ^{\infty} 2^{-p(s-d/2)}$ is convergent and equals to a constant $C_{d,s}$ depending only on $d,s$. Combining the estimates for $I_1$ and $I_2$, we obtain the upper bound in \Cref{prop:linearized equation slower t^-1 rate}.

\end{proof}

\section{Wasserstein gradient flows with $o(t^{-1})$-rates: proof of \Cref{thm:wass -1-delta rate intro} and \Cref{cor:uniform perturbation Besov norms}}
\label{s:nonlinear decay close t-1}

In this section, we prove \Cref{thm:wass -1-delta rate intro} by constructing
exact solutions to the Wasserstein gradient flow \eqref{e:wass grad flow}
whose energy has two-sided decay arbitrarily close to $t^{-1}$.
We work on the flat torus $M=\TT^d=\bigl(\bR/(2\pi\bZ)\bigr)^{\otimes d}$ with the inverse fractional Laplacian kernel
$\cK=(-\Delta)^{-s}\circ\Pi_0$ introduced in
\Cref{s:inverse laplacian kernel}. We begin in \Cref{s:preliminary besov norms} by introducing the perturbation equation and the Besov-space framework and then state the main technical result, \Cref{prop:eulerian uniform estimate}. Section~\ref{s:auxiliary lemmas for blockwise estimate} establishes the auxiliary lemmas used in the proof of \Cref{prop:eulerian uniform estimate}. In \Cref{s:proof of blockwise persistence}, we give a proof of \Cref{prop:eulerian uniform estimate}. Finally, \Cref{s:proof of wass -1-delta rate and uniform-in-time perturbation} proves \Cref{thm:wass -1-delta rate intro} and \Cref{cor:uniform perturbation Besov norms}.


\subsection{Preliminaries for \Cref{thm:wass -1-delta rate intro} and \Cref{prop:eulerian uniform estimate}}\label{s:preliminary besov norms}

Let $M$ be a closed manifold satisfying \eqref{a:normalized uniform measure}. For $h\in L^2(M)$, define
\begin{align}\label{e:def Pi D operators}
    \Pi_0 h \coloneqq h-\int_Mh\,\de x, \quad \cD[h] \coloneqq\nabla(-\Delta)^{-1}(\Pi_0h),
\end{align}
and for the present kernel $\cK=(-\Delta)^{-s}\circ\Pi_0$, define
\begin{align}\label{e:def A Q operators}
    \cA[h] \coloneqq-\Delta\cK[h]
    =(-\Delta)^{1-s}(\Pi_0h), \quad 
    \cQ[h] \coloneqq\nabla\cK[h]
    =\nabla(-\Delta)^{-s}(\Pi_0h).         
\end{align}
Then
\begin{align}\label{e:A Q D relations}
    \cQ=\cD\cA,
    \qquad
    \operatorname{div}\cQ=-\cA,
\end{align}
and the energies become
\begin{align}\label{e:inverse Laplacian energy formula}
    \mathsf{E}[u]
    =\frac{1}{2} \int_M u \cdot \cK[u]  \, \de x , \quad \text{ and }\quad 
    \mathsf{E}_{\min}=0.
\end{align}

We write
\begin{align*}
    L_0^2(M)
    \coloneqq
    \left\{h\in L^2(M):\int_Mh\,\de x=0\right\}.
\end{align*}
Let $f_0\in L_0^2(M)\cap L^\infty(M)$, let $\varepsilon>0$, and suppose
that $u_0=1+\varepsilon f_0$ is nonnegative. Writing $u_t=1+\varepsilon f_t$, where $u_t$ solves \eqref{e:wass grad flow}. Then \eqref{e:wass grad flow}, or equivalently \eqref{e:wass grad flow perturbation intro}, becomes
\begin{align}\label{e:wass grad flow perturbation}
    \partial_tf_t + \cA[f_t]
    = \varepsilon \langle \cQ[f_t], \nabla f_t \rangle - \varepsilon f_t \cA[f_t]  =  \varepsilon \operatorname{div}(f_t \cQ[f_t]).
\end{align}
Because \eqref{e:wass grad flow} preserves mass, $f_t$ remains
mean-zero. Moreover, the energies satisfy
\begin{align}\label{e:energy ut ft}
    \mathsf{E}[u_t]
    =\frac{\varepsilon^2}{2}
    \langle f_t,(-\Delta)^{-s}f_t\rangle
    = \varepsilon^2 \mathsf{E}[f_t],
\end{align}
where $\mathsf{E}[f_t]$ is equivalent to the homogeneous Sobolev norm $\|f_t\|_{\dot H^{-s}}^2$ defined in \eqref{e:sobolev norm}. That is, there is a constant $C_s > 1$, such that
    \begin{align}\label{e:equivalence sobolev norm energy}
        C_s ^{-1} \cdot \|f_t\|_{\dot H^{-s}}^2 \leq \mathsf{E}[f_t] \leq C_s \cdot \|f_t\|_{\dot H^{-s}}^2 .
    \end{align}
Also, recall from \eqref{e:linearized flow} and \eqref{e:linearized semigroup} that its linearization at $\sigma = 1 \de x$ solves
\begin{align}\label{e:linearized flow in nonlinear section}
    \partial_t\overline f_t+\cA[\overline f_t]=0,
    \qquad
    \overline f_0=f_0,
\end{align}
and hence $\overline f_t=e^{-t\cA}f_0$.
The derivative $\nabla f_t$ in the nonlinear transport term $\langle \cQ[f_t], \nabla f_t \rangle$ in \eqref{e:wass grad flow perturbation} prevents a direct estimate
of \eqref{e:wass grad flow perturbation} in the same Besov space \eqref{e:Besov norm def}, and results in the loss of one derivative in the uniform comparison between $f_t$ and $\overline f_t$ in \Cref{cor:uniform perturbation Besov norms}. In order to prove the persistence of energy across eigenfunction blocks as discussed after \Cref{thm:wass -1-delta rate intro}, we then seek to analyze the evolution equation of each eigenfunction block of $f_t$. This leads to the following Littlewood--Paley decomposition used throughout
the section. 

We define a smooth Littlewood--Paley decomposition in the following way. Choose a nonincreasing function
$\chi\in C_c^\infty([0,\infty))$ such that
\begin{align*}
    0\leq\chi\leq1,
    \quad
    \chi(\lambda)=1 \text{ for }0\leq\lambda\leq\frac32,
    \quad
    \chi(\lambda)=0 \text{ for }\lambda\geq2, \quad -4 \leq \chi' \leq 0,
\end{align*}
and set
\begin{align}\label{e:eta cut-off}
    \eta(\lambda)\coloneqq\chi(\lambda/2)-\chi(\lambda).
\end{align}
Then $\eta\in C_c^\infty((1,4))$, $\eta=1$ on $[2,3]$, and
\begin{align}\label{e:partition of unity}
    \chi(\lambda)+\sum_{k=0}^\infty\eta(2^{-k}\lambda)=1,
    \quad \forall \lambda\geq0.
\end{align}
Let $\{\phi_n\}_{n\geq0}$ be the orthonormal eigenbasis of $-\Delta$ with eigenvalues $\{\lambda_n\}_{n\geq0}$ in \Cref{a:kernel}. For any $h \in L^2(M)$, we write its spectral expansion as
\begin{align}\label{e:L^2 spectral expansion}
        h = \sum_{n=0} ^\infty b_n \phi_n, \quad \text{ with } b_n \coloneqq \int_M h \phi_n \, \de x.
    \end{align}
We then define
\begin{align}\label{e:partition of unity projection}
        \proj_{-1} h \coloneqq \sum_{n=0} ^\infty \chi\left(\sqrt{\lambda_n} \right) b_n \phi_n, \quad \text{ and } \quad 
        \proj_k h \coloneqq \sum_{n=0} ^{\infty} \eta\left( 2^{-k} \sqrt{\lambda_n} \right)b_n \phi_n, \quad   \forall k \in \bN.
    \end{align}
Thus, combining \eqref{e:partition of unity} and \eqref{e:partition of unity projection}, we have that
\begin{align}\label{e:partition of L^2 function}
    h=\sum_{k=-1}^\infty\proj_kh.
\end{align}
We use the notations
\begin{align}\label{e:dyadic block and partial sum notation}
    h_k\coloneqq\proj_kh,
    \quad
    \bfS_k h\coloneqq\sum_{j= -1} ^k h_j, \text{ for } k \geq -1,
    \quad
    \bfS_k h\coloneqq0\quad\text{if }k<-1.
\end{align}
The smoothness of the cutoff is used in the multiplier-difference estimate
in \Cref{lem:torus dyadic Fourier commutator}; the other dyadic
interactions are controlled by their exact Fourier supports.

Since each $\proj_k$ has finite spectral support, it extends
canonically to the space of distributions $\mathcal D'(M)$. For $\rho\in\bR$ and
$h\in\mathcal D'(M)$, define the Besov norm
\begin{align}\label{e:Besov norm def}
    \|h\|_{B^\rho}
    \coloneqq
    \sup_{k\geq-1}2^{k\rho}\|\proj_kh\|_2,
\end{align}
and denote by $B^\rho(M)$ the space of distributions with finite
$\|\cdot\|_{B^{\rho}}$-norm. Then, for a time-dependent function $H(t,x)$ on $[0,T]\times M$, we denote $H_k(t) \coloneqq \proj_k H(t,\cdot )$ and define the Chemin--Lerner norm
\begin{align}\label{e:Chemin Lerner norm def}
    \|H\|_{B_T^\rho}
    \coloneqq
    \sup_{k\geq-1}2^{k \rho}
    \int_0^T\|H_k(t)\|_2\,\de t.
\end{align}
We also set
\begin{align}\label{e:uniform in time Besov norm}
    \|H\|_{\BB_T^\rho}
    \coloneqq
    \sup_{0\leq t\leq T}\|H(t,\cdot)\|_{B^\rho}
    +\|\cA [H]\|_{B_T^\rho}.
\end{align}
Finally, for any $h \in L_0^2(M)$, we use the following equivalent definition of the standard homogeneous Sobolev norms $\dot H^{\rho}$:
\begin{align}\label{e:sobolev norm}
    \|h\|_{\dot H^\rho}^2
    \coloneqq
    \sum_{k=-1}^\infty2^{2\rho k}\|h_k\|_2^2.
\end{align}
We write
\begin{align}\label{e:def r inverse Laplacian}
    r\coloneqq2(s-1).
\end{align}
All technical lemmas below for the proof of \Cref{thm:wass -1-delta rate intro} require only
    \begin{align*}
        s>1, \quad \text{ and } \sigma> d/2.
    \end{align*}

In particular, when $M = \TT^d = \left(\bR / (2\pi \bZ) \right)^{\otimes d}$, a standard orthonormal basis of Laplacian eigenfunctions is $\{e^{i\langle \xi, \, x \rangle}\}_{\xi \in \bZ^d}$ with eigenvalues $\{|\xi|^2\}_{\xi \in \bZ^d}$. For any $h \in L^2(\TT^d)$, we can write its spectral expansion/Fourier expansion \eqref{e:L^2 spectral expansion} into the form
    \begin{align}\label{e:h torus Fourier expansion}
        h(x) = \sum_{\xi \in \bZ^d}\widehat{h}(\xi)e^{i \langle \xi, \, x \rangle}.
    \end{align}
For $k\geq-1$, define
\begin{align}\label{e:littlewood cutoff indices}
    \vartheta_k(r)
    \coloneqq
    \begin{cases}
        \chi(r), & k=-1,\\
        \eta(2^{-k}r), & k\geq0.
    \end{cases}
\end{align}
Thus, by \eqref{e:L^2 spectral expansion},
    \begin{align}\label{e:hk expansion}
             h_{k}(x)= \proj_k h (x) = \sum_{\xi \in \bZ^d}\widehat{h_{k}}(\xi)e^{i \langle \xi, \, x \rangle} = \sum_{\xi \in \bZ^d} \vartheta_k(|\xi|)\widehat{h}(\xi) e^{i \langle \xi, \, x \rangle} .
        \end{align}
We thus have
    \begin{align}\label{e:Fourier support of littlewood partition}
        \operatorname{supp} \widehat{h_{k}} \subseteq \{\xi \in \bZ^d \mid 2^{k} < |\xi| < 2^{k+2} \} \text{ for } k \geq 0, \quad \text{ and } \operatorname{supp} \widehat{h_{-1}} \subseteq \{\xi \in \bZ^d \mid 0 \leq |\xi| < 2 \} .
    \end{align}
Similarly, for a vector field $V \in L^2 (M)$, we also write
    \begin{align}\label{e:Vk expansion}
             V_{k}(x)= \proj_k V (x) = \sum_{\xi \in \bZ^d} \widehat{V_{k}}(\xi)e^{i \langle\xi, \, x \rangle} =  \sum_{\xi \in \bZ^d} \vartheta_k(|\xi|)\widehat{V}(\xi) e^{i \langle \xi, \, x \rangle} .
        \end{align}
The notation $V(h)$ denotes the function $(V(h))(x) \coloneqq \langle V(x) ,\nabla h (x)\rangle$.

We now state \Cref{prop:eulerian uniform estimate} as the main technical ingredient in the proof of \Cref{thm:wass -1-delta rate intro}.

\begin{proposition}[Uniform bounded Besov norms and persistence of dyadic blocks]
\label{prop:eulerian uniform estimate}
Let $M=\TT^d$. Let $s>1$, $\sigma>d/2$, and denote $r \coloneqq 2(s-1)$. For any $k \geq -1$, denote 
    \begin{align*}
        \omega_k\coloneqq2^{-rk} .
    \end{align*}
Let $0<\varepsilon\leq1$. There are constants $c_0,C_0>0$,
depending only on $s,\sigma,d$, such that
the following holds. For every $f_0\in L_0^2(M)\cap B^\sigma(M)$ satisfying
\begin{align}\label{e:eulerian smallness condition}
    \varepsilon\|f_0\|_{B^\sigma}\leq c_0,
\end{align}
equation \eqref{e:wass grad flow perturbation} admits a global
solution $f_t$ on $[0,\infty) \times M$. Indeed, for any $T\geq0$, define
\begin{align}\label{e:def Eulerian X Y}
    \cX_T\coloneqq\sup_{0\leq t\leq T}\|f_t\|_{B^\sigma},
    \qquad
    \cY_T\coloneqq\|\cA [f_t]\|_{B_T^\sigma}.
\end{align}
Then,
\begin{align}\label{e:eulerian global bound}
    \sup_{T \geq 0} (\cX_T+\cY_T)\leq C_0\cX_0.
\end{align}
Moreover, there are constants $c_1,C_1,\tau_0>0$,
depending only on $s,\sigma,d$, such that for any
$k\geq-1$ and any $t \geq 0$,
\begin{align}\label{e:dyadic persistence quantitative}
    \|\proj_k f_t \|_2
    \geq
    e^{-C_1(\omega_k t+\varepsilon \cX_0)}
    \|\proj_k f_0\|_2
    -
    C_1  \varepsilon \cX_0^2 \cdot 2^{-\sigma k}.
\end{align}
In particular, if, for some constant $\gamma>0$, the initial datum satisfies
\begin{align}\label{e:dyadic persistence initial}
    \|\proj_k f_0\|_2\geq\gamma2^{-\sigma k},
    \qquad
    \varepsilon \cX_0^2\leq c_1\gamma,
\end{align}
then
\begin{align}\label{e:dyadic persistence simple}
    \|\proj_k f_t\|_2
    \geq
    \frac{1}{2}\gamma2^{-\sigma k},
    \quad
    \text{ for }0\leq t\leq\tau_0\omega_k^{-1}.
\end{align}
\end{proposition}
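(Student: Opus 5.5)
The plan is a bootstrap on the quantity $\cX_T+\cY_T$, carried out with blockwise $L^2$ energy estimates. The global solution then follows by standard continuation from local well-posedness, which we take from \cite{chizat2026quantitative} together with propagation of regularity.

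\textbf{Step 1: blockwise equation.} Apply $\proj_k$ to \eqref{e:wass grad flow perturbation}, using the transport form. This gives
\begin{align*}
    \partial_t f_k+\cA[f_k]
    =
    \varepsilon\,\cQ[f_t](f_k)
    +\varepsilon\bigl[\proj_k,\cQ[f_t]\bigr](f_t)
    -\varepsilon\proj_k\bigl(f_t\cA[f_t]\bigr).
\end{align*}
Next, take the $L^2$ inner product with $f_k$. The transport term integrates by parts to $\frac{\varepsilon}{2}\int\cA[f_t]|f_k|^2$, since $\operatorname{div}\cQ=-\cA$. This term is bounded by $\varepsilon\|\cA[f_t]\|_\infty\|f_k\|_2^2$. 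The dissipation satisfies $\langle\cA f_k,f_k\rangle\ge c\,2^{-rk}\|f_k\|_2^2$ for $k\ge0$, because $\cA$ acts by $|\xi|^{-r}$ on the support $2^k<|\xi|<2^{k+2}$. Dividing by $\|f_k\|_2$ yields
\begin{align*}
    \frac{\de}{\de t}\|f_k\|_2+c\,\omega_k\|f_k\|_2
    \le
    \varepsilon\|\cA[f_t]\|_\infty\|f_k\|_2+\varepsilon R_k(t),
\end{align*}
where $R_k$ collects the commutator and the product term.

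\textbf{Step 2: the remainder.} The key estimate is
\begin{align*}
    R_k(t)\lesssim 2^{-\sigma k}\,\|f_t\|_{B^\sigma}\,\|\cA[f_t]\|_{B^\sigma}
    \quad\text{and}\quad
    \|\cA[f_t]\|_\infty\lesssim\|\cA[f_t]\|_{B^\sigma}.
\end{align*}
The second bound follows from $\sigma>d/2$ by summing the dyadic pieces. For the product, I would use a paraproduct decomposition relying on the Fourier supports \eqref{e:Fourier support of littlewood partition}. For the commutator, I would invoke the torus dyadic transport estimate (\Cref{lem:torus dyadic transport} and \Cref{lem:torus dyadic Fourier commutator}). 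There, the smooth multiplier difference $\vartheta_k(|\xi|)-\vartheta_k(|\eta|)$ gains a factor $2^{-k}|\xi-\eta|$. This gain absorbs the derivative on $f$: it is traded against $\nabla\cQ$, which has one derivative more than $\cA$ minus one derivative, i.e.\ $\cQ=\cD\cA$.

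I expect this commutator estimate to be the main obstacle. The difficulty is twofold: it must lose no derivative, and it must land exactly in $\|f\|_{B^\sigma}\|\cA f\|_{B^\sigma}$ with the decay $2^{-\sigma k}$. The high--high interactions require $\sigma>d/2$ (or $\sigma>0$) to sum.

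\textbf{Step 3: closing the bootstrap.} Apply Gronwall to the block inequality. Multiply by $2^{\sigma k}$; take the sup in $t$ for $\cX_T$; and integrate $\omega_k\|f_k\|_2$ in time for $\cY_T$, noting that $\|\cA f_k\|_2\asymp\omega_k\|f_k\|_2$. The low block $k=-1$ behaves like $k=0$, since $\Pi_0$ removes the zero mode and the remaining frequencies satisfy $|\xi|\ge1$. Together with the observation that $\int_0^T\|\cA f\|_\infty\le C\cY_T$, this gives
\begin{align*}
    \cX_T+c\,\cY_T\le e^{C\varepsilon\cY_T}\bigl(\cX_0+C\varepsilon\cX_T\cY_T\bigr).
\end{align*}
Under $\varepsilon\cX_0\le c_0$, continuity in $T$ closes this at $\cX_T+\cY_T\le C_0\cX_0$, which proves \eqref{e:eulerian global bound}.

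\textbf{Step 4: persistence.} Use the reverse inequality from Step 1,
\begin{align*}
    \frac{\de}{\de t}\|f_k\|_2\ge-\bigl(C\omega_k+\varepsilon\|\cA f_t\|_\infty\bigr)\|f_k\|_2-\varepsilon R_k.
\end{align*}
The upper bound $\langle\cA f_k,f_k\rangle\le C\omega_k\|f_k\|_2^2$ is what produces the $C\omega_k$ term. Integrating this inequality gives \eqref{e:dyadic persistence quantitative}, where the source contributes
\begin{align*}
    \varepsilon\int_0^t R_k\lesssim\varepsilon2^{-\sigma k}\cX_T\cY_T\lesssim\varepsilon\cX_0^2\,2^{-\sigma k}.
\end{align*}
Finally, choosing $\tau_0$ and $c_1$ small (with $c_0$ small) gives \eqref{e:dyadic persistence simple}, because then $e^{-C_1(\tau_0+c_0)}\ge 3/4$ and $C_1c_1\le1/4$.
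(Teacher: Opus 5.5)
Your overall architecture matches the paper's: a blockwise energy identity for $\proj_k f_t$, a dyadic commutator plus a product estimate for the remainder, Gr\"onwall, a bootstrap on $\cX_T+\cY_T$, and the reverse block inequality for persistence. As written, however, there is a genuine gap between Step 2 and Steps 3--4.

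Your key remainder estimate is pointwise in time, $R_k(t)\lesssim 2^{-\sigma k}\|f_t\|_{B^\sigma}\|\cA[f_t]\|_{B^\sigma}$. Integrating it only gives
\[
\int_0^T R_k(t)\,\de t\lesssim 2^{-\sigma k}\,\cX_T\int_0^T\|\cA[f_t]\|_{B^\sigma}\,\de t .
\]
But $\cY_T$ is the Chemin--Lerner norm \eqref{e:Chemin Lerner norm def}, with the time integral \emph{inside} the supremum over blocks. Hence $\cY_T\leq\int_0^T\|\cA[f_t]\|_{B^\sigma}\,\de t$, and not conversely.

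This difference is decisive here. Take the linear flow with one eigenfunction of amplitude $2^{-\sigma k}$ in each block, which is the datum used for \Cref{thm:wass -1-delta rate intro}. Then $2^{\sigma k}\|\proj_k\cA[\overline f_t]\|_2=\mu_k e^{-\mu_k t}$ with $\mu_k\asymp\omega_k$. So $\|\cA[\overline f_t]\|_{B^\sigma}\asymp t^{-1}$ for large $t$, and $\int_0^T\|\cA[\overline f_t]\|_{B^\sigma}\,\de t\sim\log T$, whereas $\cY_T\leq 1$ uniformly in $T$. Consequently your Step 3 inequality does not follow from Step 2, and neither does the bound $\varepsilon\int_0^t R_k\lesssim\varepsilon 2^{-\sigma k}\cX_T\cY_T$ in Step 4. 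A right-hand side growing with $T$ would destroy exactly the uniformity asserted in \eqref{e:eulerian global bound} and \eqref{e:dyadic persistence simple}; the latter is needed at times $t\sim\omega_k^{-1}\to\infty$.

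The missing idea is to prove the remainder bound directly in Chemin--Lerner form,
\[
\sup_{k\geq-1}2^{\sigma k}\int_0^T\|R_k(t)\|_2\,\de t\lesssim\cX_T\,\cY_T .
\]
To do this, keep the time integral on each dyadic piece $\proj_j\cA[f_t]$ (equivalently $\proj_j\cQ[f_t]$, using $\|\cQ[f_t]\|_{B_T^{\sigma+1}}\lesssim\cY_T$) before summing over $j$. Measure $f_t$ only through $\cX_T=\sup_{t\leq T}\|f_t\|_{B^\sigma}$.

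This is precisely how \Cref{lem:torus dyadic transport} and the time-dependent product bound \eqref{e:Besov product lemma in time} are formulated. In the paraproduct where $\cA[f_t]$ is the low-frequency factor, one uses $\int_0^T\|\bfS_{\ell-4}\cA[f_t]\|_\infty\,\de t\lesssim\cY_T$. This is the same $\sigma>d/2$ summation you already use for $\int_0^T\|\cA[f_t]\|_\infty\,\de t\lesssim\cY_T$. With Step 2 restated this way, your Steps 3 and 4 go through as in the paper.

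A smaller point: finiteness and continuity of $T\mapsto\cX_T+\cY_T$ are not automatic in the critical space $B^\sigma$, which is a supremum over blocks. The paper runs the bootstrap at $\sigma_2<\sigma$, where propagated $H^{\sigma_1}$ regularity gives both, and then lets $\sigma_2\to\sigma$ blockwise.
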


\subsection{Auxiliary lemmas and proofs for \Cref{prop:eulerian uniform estimate}}\label{s:auxiliary lemmas for blockwise estimate}

The proof of \Cref{prop:eulerian uniform estimate} is a combination of the following classical harmonic analysis results. Some of these estimates are standard and we put them in \Cref{s:harmonic analysis lemmas}. The following lemmas focus on the estimates involving Besov norms.

\begin{lemma}[Product Lemma]\label{lem:Besov product lemma}
    Take any  $\sigma > d/2$. There is a constant $C=C(\sigma, d, M) >0$, such that for any $h,g \in B^{\sigma}(M)$,    
        \begin{align}\label{e:Besov product lemma}
            \|hg\|_{B^\sigma} \leq C
\|h\|_{B^\sigma}\|g\|_{B^\sigma}.
        \end{align}
    Furthermore, for any time-dependent functions $H(t,x),G(t,x)$ on $ [0,T] \times M $, 
        \begin{align}\label{e:Besov product lemma in time}
            \|HG\|_{B_T ^\sigma} \leq C
\left(\sup_{0\le t\le T}\|H(t,\cdot)\|_{B^\sigma}\right) \|G\|_{B_T ^\sigma}.
        \end{align}
\end{lemma}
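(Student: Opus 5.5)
We will prove \eqref{e:Besov product lemma} by Bony's paraproduct decomposition with respect to the spectral blocks $\proj_k$, working on $M=\TT^d$ (the only case used later) so that Fourier supports of products are controlled exactly by \eqref{e:Fourier support of littlewood partition}. We write
\begin{align*}
    hg=\sum_{j,\ell\geq-1}h_jg_\ell
    =\sum_{j}\bfS_{j-3}g\,h_j+\sum_{\ell}\bfS_{\ell-3}h\,g_\ell+\sum_{|j-\ell|\leq2}h_jg_\ell
    \eqqcolon T_1+T_2+R .
\end{align*}
In $T_1$, the Fourier support of $\bfS_{j-3}g\,h_j$ lies in an annulus $|\xi|\sim2^{j}$, since $\bfS_{j-3}g$ has frequencies below $2^{j-1}$. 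Hence $\proj_k T_1$ only involves $|j-k|\leq C$ for an absolute constant $C$. For these terms we use
\begin{align*}
    \|\bfS_{j-3}g\,h_j\|_2\leq\|\bfS_{j-3}g\|_\infty\|h_j\|_2 .
\end{align*}
The key input is the embedding $\|\bfS_{m}g\|_\infty\leq\sum_{i}\|g_i\|_\infty\lesssim\sum_i2^{id/2}\|g_i\|_2\lesssim\sum_i2^{i(d/2-\sigma)}\|g\|_{B^\sigma}\lesssim\|g\|_{B^\sigma}$. This uses the Bernstein inequality (\Cref{lem:Bernstein inequalities}, or directly Cauchy--Schwarz on the $O(2^{id})$ lattice points of the block) together with $\sigma>d/2$. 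Consequently $2^{k\sigma}\|\proj_kT_1\|_2\lesssim\|g\|_{B^\sigma}\|h\|_{B^\sigma}$, and $T_2$ is symmetric.

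The remainder $R$ is the step that needs care, because $h_jg_\ell$ with $j\approx\ell$ has Fourier support in the whole ball $|\xi|\lesssim2^{j}$. Thus $\proj_kR$ receives contributions from all $j\geq k-C$, and we must sum in $j$ using the positivity of $\sigma$. We estimate
\begin{align*}
    \|\proj_k(h_jg_\ell)\|_2\leq\|h_j\|_\infty\|g_\ell\|_2\lesssim2^{jd/2}\|h_j\|_2\|g_\ell\|_2\lesssim2^{j(d/2-2\sigma)}\|h\|_{B^\sigma}\|g\|_{B^\sigma}.
\end{align*}
Multiplying by $2^{k\sigma}$ and summing over $j\geq k-C$ gives a bound of order $2^{k(d/2-\sigma)}\|h\|_{B^\sigma}\|g\|_{B^\sigma}$, which is at most a constant times $\|h\|_{B^\sigma}\|g\|_{B^\sigma}$. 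This step converges precisely because $2\sigma-d/2>\sigma>0$, and it is where $\sigma>d/2$ (rather than $\sigma>0$) makes the computation painless. One must also check that $\proj_k$ is bounded on $L^2$ uniformly in $k$, which is immediate since $0\leq\eta,\chi\leq1$.

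For \eqref{e:Besov product lemma in time}, we repeat the decomposition at each fixed $t$, but always place the $L^\infty$ (or sup-in-$B^\sigma$) factor on $H$ and the $L^2$ block norm on $G$. In $T_1$, with $\bfS_{j-3}G\,H_j$, we instead estimate $\|H_j\|_\infty\|\bfS_{j-3}G\|_2$. This is also fine: $\|\bfS_{j-3}G\|_2\leq\sum_{i\leq j}\|G_i\|_2$ and $\|H_j\|_\infty\lesssim2^{j(d/2-\sigma)}\sup_t\|H\|_{B^\sigma}$. After integrating in time, we obtain
\begin{align*}
    2^{j\sigma}\int_0^T\|\cdot\|_2\,\de t\lesssim\sup_t\|H\|_{B^\sigma}\sum_{i\leq j}2^{(j-i)(d/2-\sigma)}\Bigl(2^{i\sigma}\int_0^T\|G_i\|_2\,\de t\Bigr),
\end{align*}
and the geometric factor is summable since $d/2-\sigma<0$. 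The terms $T_2$ and $R$ are handled exactly as in the stationary case, integrating $\|G_\ell(t)\|_2$ in time and bounding $H$ pointwise in $t$ by $\sup_t\|H\|_{B^\sigma}$. The main obstacle is purely bookkeeping: arranging each of the three paraproduct pieces so that $G$ always appears with a time-integrated block norm, and verifying that the resulting dyadic sums are geometric.
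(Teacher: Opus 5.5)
Your stationary argument for \eqref{e:Besov product lemma} is fine on $\TT^d$. The time-dependent bound \eqref{e:Besov product lemma in time}, however, breaks at the term $T_1=\sum_j\bfS_{j-3}G\,H_j$. With your allocation $\|H_j\|_\infty\|\bfS_{j-3}G\|_2$, the correct bookkeeping is
\begin{align*}
2^{j\sigma}\int_0^T\|\bfS_{j-3}G\,H_j\|_2\,\de t
\lesssim
2^{jd/2}\Bigl(\sup_{t}\|H\|_{B^\sigma}\Bigr)\sum_{i\leq j-3}\int_0^T\|G_i\|_2\,\de t
\leq
\Bigl(\sup_{t}\|H\|_{B^\sigma}\Bigr)\|G\|_{B_T^\sigma}\sum_{i=-1}^{j-3}2^{jd/2-i\sigma}.
\end{align*}
So the weight is $2^{jd/2-i\sigma}$, not $2^{(j-i)(d/2-\sigma)}$. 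The $i=-1$ term alone is $2^{jd/2+\sigma}$, which is unbounded in $j$.

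This is not merely an exponent slip: the H\"older split itself loses $2^{jd/2}$. Take $G$ constant in space and $H_j$ a block with $\|H_j\|_\infty\approx2^{jd/2}\|H_j\|_2$. Then $\|H_j\|_\infty\|G\|_2$ overestimates $\|H_jG\|_2$ by exactly that factor, and the low-frequency part of $G$ supplies no compensating decay. So the rule ``always put the $L^\infty$ factor on $H$'' is the wrong one. What matters is that the spatial $L^\infty$ norm falls on the low-frequency factor, and that factor can carry the time integral.

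Concretely, bound $\|\bfS_{j-3}G\,H_j\|_2\leq\|\bfS_{j-3}G\|_\infty\|H_j\|_2$, use $\sup_t\|H_j\|_2\leq2^{-j\sigma}\sup_t\|H\|_{B^\sigma}$, and
\begin{align*}
\int_0^T\|\bfS_{m}G\|_\infty\,\de t
\leq\sum_{i}\int_0^T\|G_i\|_\infty\,\de t
\lesssim\sum_{i}2^{i(d/2-\sigma)}\Bigl(2^{i\sigma}\int_0^T\|G_i\|_2\,\de t\Bigr)
\lesssim\|G\|_{B_T^\sigma}.
\end{align*}
This is exactly \eqref{e:Besov norm control infinity norm partial sum in time}, which the paper uses for its term $\Gamma_GH$. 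With this change, your treatment of $T_2$ and $R$ goes through as you describe. In $R$, putting $L^\infty$ on $H_j$ is harmless because $G_\ell$ with $|\ell-j|\leq2$ supplies the extra $2^{-j\sigma}$.

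A second, lesser issue: the lemma is stated on a general closed manifold $M$. Your frequency-localization claims rely on exact Fourier supports on $\TT^d$, namely that $\proj_kT_1$ sees only $|j-k|\leq C$ and $\proj_kR$ sees only $j\geq k-C$. On a general $M$, a product of spectral blocks has no compact spectral support. The paper handles this with the almost-orthogonality bound of \Cref{lem:almost orthogonal littlewood paley}, which gives decay $2^{-\xi(k-\ell)_+}$ when the output frequency exceeds the input one. It then sums the remaining range $k<\ell$ through the factor $2^{(k-\ell)\sigma}$. Since the lemma is only applied on $\TT^d$, this restriction costs nothing downstream, but as written your argument proves only the torus case.
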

\begin{proof}
    We only prove \eqref{e:Besov product lemma in time} as the proof of \eqref{e:Besov product lemma} is similar. First, by the Bernstein-type estimates \Cref{lem:Bernstein inequalities}, we have that $B^{\sigma}(M) \hookrightarrow L^{\infty}(M)$ when $ \sigma > d/2$. Indeed, for any $ h \in B^{\sigma}(M)$,
        \begin{align}\label{e:Besov norm control infinity norm}
\|h\|_{\infty} \overset{\eqref{e:partition of L^2 function}}{\leq}
\sum_{k=  -1} ^{\infty} \|\proj_kh\|_\infty \overset{\Cref{lem:Bernstein inequalities}}{\leq}
C \sum_{k=  -1} ^{\infty} 2^{kd/2} \|\proj_k h\|_2 \overset{\eqref{e:Besov norm def}}{\leq}
C \|h\|_{B^\sigma} \sum_{k=  -1} ^{\infty} 2^{k(d/2-\sigma)} = C_{M,\sigma} \|h\|_{B^\sigma},
        \end{align}
where $C_{M,\sigma}>0$ is a constant depending only on $\sigma,d,M$ as $\sum_{k=  -1} ^{\infty} 2^{k(d/2-\sigma)} < \infty$ when $\sigma > d/2$. Similar to the proof of \eqref{e:Besov norm control infinity norm}, for any $k \geq -1$ and the partial sums $\bfS_k$ in \eqref{e:dyadic block and partial sum notation}, we have that
    \begin{align}\label{e:Besov norm control infinity norm partial sum}
        \| \bfS_k h \|_{\infty} \leq C_{M,\sigma} \|h\|_{B^{\sigma}},
    \end{align}
and 
    \begin{align}\label{e:Besov norm control infinity norm partial sum in time}
        \int_0 ^T \| \bfS_k G(t,\cdot) \|_{\infty} \, \de t \leq C_{M,\sigma} \|G\|_{B_T ^\sigma} .
    \end{align}

Next, for any time-dependent functions $H,G$ on  $[0,T] \times M$, we have the following Bony decomposition for their product $HG$:
    \begin{align}\label{e:Bony decomposition}
        HG= \Gamma_H G+ \Gamma_G H+R(H,G),
    \end{align}
where
    \begin{align*}
        \Gamma_H G \coloneqq \sum_{\ell \geq -1} \bfS_{\ell -4}H \cdot \proj_\ell G, \quad \Gamma_G H \coloneqq \sum_{\ell \geq -1} \bfS_{\ell -4}G \cdot \proj_\ell H, \quad R(H,G) \coloneqq \sum_{\substack{j,\ell \geq -1 \\ |j-\ell| \leq 3}}  \proj_j H \cdot \proj_\ell G.
    \end{align*}
We first estimate $\|\Gamma_H G\|_{B_T ^\sigma}$. For any $k,\ell \geq -1 $ and $t \in [0,T]$, by \Cref{lem:almost orthogonal littlewood paley}, for any $\xi \in \bZ_+$,
    \begin{align*}
        \begin{split}
            \|\proj_k \left( \bfS_{\ell -4} H(t,\cdot) \cdot \proj_\ell G(t,\cdot) \right)\|_2 &\leq C_{\xi,M} 2^{-\xi(k-\ell)_+} \| \bfS_{\ell -4} H(t,\cdot) \|_{\infty} \|\proj_\ell G(t,\cdot) \|_2 
            \\  &\overset{\eqref{e:Besov norm control infinity norm partial sum}}{\leq}  C_{\sigma,\xi,M} 2^{-\xi(k-\ell)_+} \|  H(t,\cdot) \|_{B^{\sigma}} \|\proj_\ell G(t,\cdot) \|_2.
        \end{split}
    \end{align*}
Thus,
    \begin{align*}
        \begin{split}
                    &2^{k \sigma} \int_0^T
\left\|\proj_k \left[ \Gamma_H G\right] \right\|_2\,\de t
\leq C_{\sigma,\xi,M} \cdot 
\sup_{0\leq t\leq T}\|H(t,\cdot)\|_{B^\sigma}
\sum_{\ell \geq -1}
2^{k\sigma}
2^{-\xi(k-\ell)_+}
\int_0^T\|\proj_\ell G(t,\cdot )\|_2\, \de t
\\  &\leq C_{\sigma,\xi,M} \cdot 
\sup_{0\leq t\leq T}\|H(t,\cdot)\|_{B^\sigma} \cdot \|G\|_{B_T ^\sigma} \cdot 
\sum_{\ell \geq -1}
2^{(k-\ell)\sigma}
2^{-\xi(k-\ell)_+}.
        \end{split}
    \end{align*}
Now, set $\xi = \lfloor 2\sigma \rfloor +2 >  2\sigma+1$. We have that
    \begin{align}\label{e:low high sum}
        \begin{split}
            &\sum_{\ell \geq -1}
2^{(k-\ell)\sigma}
2^{-\xi(k-\ell)_+}  =  \sum_{\ell \in [ -1,k-1]}
2^{(k-\ell)\sigma}
2^{-\xi(k-\ell)_+} + \sum_{\ell \geq k}
2^{(k-\ell)\sigma}
2^{-\xi(k-\ell)_+}
    \\  &= \sum_{q = 1} ^{k+1} 2^{-(\xi-\sigma)q} + \sum_{q=0} ^{\infty} 2^{-q \sigma} \leq 2 \sum_{q=0} ^{\infty} 2^{-q \sigma} = \frac{2}{1-2^{-\sigma}} < \infty,
        \end{split}
    \end{align}
which is a constant $C_{\sigma}$ independent of $k$. Thus, we obtain that
    \begin{align*}
        \|\Gamma_H G\|_{B_T ^\sigma} \leq C_{\sigma,M} \cdot 
\sup_{0\leq t\leq T}\|H(t,\cdot)\|_{B^\sigma} \cdot \|G\|_{B_T ^\sigma} .
    \end{align*}
Next, we estimate $\|\Gamma_G H\|_{B_T ^\sigma}$. Similarly, for any $k,\ell \geq -1 $ and $t \in [0,T]$, by \Cref{lem:almost orthogonal littlewood paley}, for any $\xi \in \bZ_+$,
    \begin{align*}
        \begin{split}
            \|\proj_k \left( \bfS_{\ell -4} G(t,\cdot) \cdot \proj_\ell H(t,\cdot) \right)\|_2 &\leq C_{\xi,M} 2^{-\xi(k-\ell)_+} \| \bfS_{\ell -4} G(t,\cdot) \|_{\infty} \|\proj_\ell H(t,\cdot) \|_2 
            \\  &\overset{\eqref{e:Besov norm def}}{\leq}  C_{\xi,M} 2^{-\xi(k-\ell)_+} 2^{-\sigma \ell} \|H(t,\cdot)\|_{B^\sigma} \| \bfS_{\ell -4} G(t,\cdot) \|_{\infty} .
        \end{split}
    \end{align*}
Thus, set $\xi = \lfloor 2\sigma \rfloor +2 >  2\sigma+1$ again, 
    \begin{align*}
        \begin{split}
                    &2^{k \sigma} \int_0^T
\left\|\proj_k \left[ \Gamma_G H\right] \right\|_2\,\de t
\leq C_{\sigma,M} \cdot 
\sup_{0\leq t\leq T}\|H(t,\cdot)\|_{B^\sigma}
\sum_{\ell \geq -1}
2^{(k-\ell)\sigma}
2^{-\xi(k-\ell)_+}
\int_0^T \| \bfS_{\ell -4} G(t,\cdot) \|_{\infty}\, \de t
\\  &\overset{\eqref{e:Besov norm control infinity norm partial sum in time}}{\leq} C_{\sigma,M} \cdot 
\sup_{0\leq t\leq T}\|H(t,\cdot)\|_{B^\sigma} \cdot \|G\|_{B_T ^\sigma} \cdot 
\sum_{\ell \geq -1}
2^{(k-\ell)\sigma}
2^{-\xi(k-\ell)_+}
\\  &\overset{\eqref{e:low high sum}}{=} C_{\sigma, M} \cdot 
\sup_{0\leq t\leq T}\|H(t,\cdot)\|_{B^\sigma} \cdot \|G\|_{B_T ^\sigma} .
        \end{split}
    \end{align*}
Thus, we also obtain that
    \begin{align*}
        \|\Gamma_G H\|_{B_T ^\sigma} \leq C_{\sigma,M} \cdot 
\sup_{0\leq t\leq T}\|H(t,\cdot)\|_{B^\sigma} \cdot \|G\|_{B_T ^\sigma} .
    \end{align*}
We finally estimate $\|R(H,G)\|_{B_T ^\sigma}$. For any $j,\ell \geq -1$ with $|j-\ell| \leq 3$ and any $t \in [0,T]$,
    \begin{align*}
            \|\proj_j H(t,\cdot)\|_{\infty} \overset{\Cref{lem:Bernstein inequalities}}{\leq} C_M 2^{jd/2} \|\proj_j H(t,\cdot)\|_{2}\overset{\eqref{e:Besov norm def}}{\leq} C_M 2^{-j(\sigma-d/2)} \|H(t,\cdot)\|_{B^\sigma} \leq C_{\sigma,M} 2^{-\ell (\sigma-d/2)} \|H(t,\cdot)\|_{B^\sigma}.
    \end{align*}
Therefore, for any $k \geq -1$, by \Cref{lem:almost orthogonal littlewood paley}, because $|j-\ell| \leq  3$,
    \begin{align*}
        \begin{split}
            & 2^{k \sigma} \int_0^T \|\proj_k \left(\proj_j H \cdot \proj_\ell G \right)\|_2 \, \de t \leq C_{\xi,M} 2^{k \sigma}  2^{-\xi (k-\ell)_+} \int_0^T \|\proj_j H(t,\cdot)\|_{\infty} \| \proj_\ell G(t,\cdot)\|_2 \, \de t
            \\ &\leq C_{\sigma,\xi,M} 2^{-\xi (k-\ell)_+ } \cdot 2^{k\sigma-\ell (\sigma-d/2)} \sup_{0\leq t\leq T}\|H(t,\cdot)\|_{B^\sigma} \int_0 ^T \| \proj_\ell G(t,\cdot)\|_2 \, \de t
            \\ &\leq C_{\sigma,\xi,M} 2^{-\xi (k-\ell)_+ } \cdot 2^{(k-\ell)\sigma-\ell (\sigma-d/2)} \sup_{0\leq t\leq T}\|H(t,\cdot)\|_{B^\sigma} \cdot \|G\|_{B_T ^\sigma}
            \\ &\leq C_{\sigma,\xi,M} 2^{-\xi (k-\ell)_+ } \cdot 2^{(k-\ell)\sigma} \sup_{0\leq t\leq T}\|H(t,\cdot)\|_{B^\sigma} \cdot \|G\|_{B_T ^\sigma}.
        \end{split}
    \end{align*}
where we used the trivial bound $\sigma -d/2 >0$ in the last inequality.
Thus, set $\xi = \lfloor 2\sigma \rfloor +2 >  2\sigma+1$ again,
    \begin{align*}
        \begin{split}
            &2^{k \sigma} \int_0^T \|\proj_k \left[R(H,G) \right]\|_2 \, \de t \leq  C_{\sigma,M} \cdot 
\sup_{0\leq t\leq T}\|H(t,\cdot)\|_{B^\sigma} \cdot \|G\|_{B_T ^\sigma} \cdot 
\sum_{\ell \geq -1}
2^{(k-\ell)\sigma}
2^{-\xi(k-\ell)_+}
\\  &\overset{\eqref{e:low high sum}}{=} C_{\sigma, M} \cdot 
\sup_{0\leq t\leq T}\|H(t,\cdot)\|_{B^\sigma} \cdot \|G\|_{B_T ^\sigma} .
        \end{split}
    \end{align*}
Thus, we finally obtain that
    \begin{align*}
        \|R(H,G)\|_{B_T ^\sigma} \leq C_{\sigma,M} \cdot 
\sup_{0\leq t\leq T}\|H(t,\cdot)\|_{B^\sigma} \cdot \|G\|_{B_T ^\sigma} .
    \end{align*}
This finishes the proof of \eqref{e:Besov product lemma in time}, and the proof of \eqref{e:Besov product lemma} follows similarly.
\end{proof}

\begin{lemma}[Dyadic Fourier commutator]\label{lem:torus dyadic Fourier commutator}
Let $M = \TT^d$. Let $h \in L^2(M)$ be a function and $V \in L^2(M)$ be a vector field. Adopt the notations in \eqref{e:hk expansion} and \eqref{e:Vk expansion}. Then, for any $k,j,\ell\geq-1$,
\begin{align}\label{e:torus elementary commutator}
    \left\|[\proj_k,V_j]h_\ell \right\|_2
    \leq
    C2^{\ell-k}2^{j(d/2+1)}
    \|V_j\|_2\|h_\ell\|_2,
\end{align}
where $[\proj_k,V_j]h_\ell
    \coloneqq
    \proj_k(V_j( h_\ell))
    -V_j\left(\proj_k h_\ell \right)$, and $C$ is a constant depending only on $d$.
\end{lemma}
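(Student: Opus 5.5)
We will prove \eqref{e:torus elementary commutator} by a direct Fourier computation. Write $V_j(h_\ell)=\langle V_j,\nabla h_\ell\rangle$. Using the expansions \eqref{e:hk expansion} and \eqref{e:Vk expansion}, the product has Fourier coefficients
\begin{align*}
\widehat{V_j(h_\ell)}(\xi)=\sum_{\zeta\in\bZ^d} \bigl\langle \widehat{V_j}(\xi-\zeta), i\zeta\bigr\rangle\,\widehat{h_\ell}(\zeta).
\end{align*}
Applying $\proj_k$ multiplies this by $\vartheta_k(|\xi|)$. The second term $V_j(\proj_k h_\ell)$ carries the factor $\vartheta_k(|\zeta|)$ instead. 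Hence
\begin{align*}
\widehat{[\proj_k,V_j]h_\ell}(\xi)=\sum_{\zeta}\bigl(\vartheta_k(|\xi|)-\vartheta_k(|\zeta|)\bigr)\bigl\langle \widehat{V_j}(\xi-\zeta), i\zeta\bigr\rangle\,\widehat{h_\ell}(\zeta).
\end{align*}

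\textbf{Multiplier difference.} The key step is to bound this difference. By the mean value theorem,
\[
|\vartheta_k(|\xi|)-\vartheta_k(|\zeta|)|\le \|\vartheta_k'\|_\infty\,\bigl||\xi|-|\zeta|\bigr|\le C2^{-k}|\xi-\zeta|,
\]
since $\vartheta_k'=2^{-k}\eta'(2^{-k}\cdot)$ and $|\chi'|\le4$, so $|\eta'|\le 6$. This is where the smoothness of the cutoff enters.

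\textbf{Frequency bounds.} Next we use the Fourier supports \eqref{e:Fourier support of littlewood partition}:
\begin{itemize}
\item $|\zeta|\le 2^{\ell+2}$ on $\supp\widehat{h_\ell}$, so $|\zeta|\le C2^\ell$;
\item $|\xi-\zeta|\le 2^{j+2}$ on $\supp\widehat{V_j}$.
\end{itemize}
Each nonzero term is therefore bounded by
\[
C\,2^{-k}\,2^{\ell}\,|\xi-\zeta|\,|\widehat{V_j}(\xi-\zeta)|\,|\widehat{h_\ell}(\zeta)|.
\]

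\textbf{Summation.} Define $w(\eta)\coloneqq|\eta|\,|\widehat{V_j}(\eta)|$. The bound above gives
\[
\|[\proj_k,V_j]h_\ell\|_2\le C2^{\ell-k}\bigl\|w*|\widehat{h_\ell}|\bigr\|_{\ell^2(\bZ^d)}
\]
by Parseval. Young's inequality then gives $\|w*|\widehat{h_\ell}|\|_{\ell^2}\le\|w\|_{\ell^1}\|\widehat{h_\ell}\|_{\ell^2}$. Since $w$ is supported in the ball $|\eta|\le 2^{j+2}$, which contains $O(2^{jd})$ lattice points, Cauchy--Schwarz yields
\[
\|w\|_{\ell^1}\le C2^{j}\cdot 2^{jd/2}\,\|\widehat{V_j}\|_{\ell^2}=C2^{j(d/2+1)}\|V_j\|_2.
\]
Combining these bounds gives \eqref{e:torus elementary commutator} with $C$ depending only on $d$. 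The $j=-1$ case is included, since its ball has radius $2$.

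The only step requiring care is the multiplier difference: we must verify that the cutoff derivative bound $|\vartheta_k'|\lesssim 2^{-k}$ holds uniformly, including at $k=-1$ where $\vartheta_{-1}=\chi$. This is immediate from $-4\le\chi'\le0$. The remaining steps are standard discrete harmonic analysis, and no torus-specific difficulty beyond the exact Fourier product formula arises.
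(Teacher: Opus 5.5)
Your proposal is correct and follows essentially the same route as the paper's proof. Both use the exact Fourier formula for the commutator, the mean-value bound $|\vartheta_k(|\xi|)-\vartheta_k(|\zeta|)|\le C2^{-k}|\xi-\zeta|$, the support bound $|\zeta|\le 2^{\ell+2}$ on the $\nabla h_\ell$ factor, and then Parseval, Young's convolution inequality and Cauchy--Schwarz over the lattice ball of radius $2^{j+2}$.
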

\begin{proof}
By the mean value theorem for $\vartheta_k$, there is a universal constant $C$ only depending on $\chi$, such that uniformly for $k\geq-1$ and $r,r'\geq0$,
\begin{align}\label{e:uniform dyadic multiplier Lipschitz}
    |\vartheta_k(r)-\vartheta_k(r')|
    \leq
    C \cdot 2^{-k}|r-r'|.
\end{align}
A direct computation gives when $\omega \in \bZ^d$,
\begin{align*}
    \widehat{[\proj_k,V_j]h_\ell}(\omega) =
    \sum_{\zeta+\xi=\omega}
        i\langle\widehat{V_j}(\zeta),\xi\rangle
        \bigl(
            \vartheta_k(|\zeta+\xi|)
            -
            \vartheta_k(|\xi|)
        \bigr)
        \widehat{h_\ell}(\xi).
\end{align*}
By \eqref{e:uniform dyadic multiplier Lipschitz}, $ \left|
        \vartheta_k(|\zeta+\xi|)-\vartheta_k(|\xi|)
    \right|
    \leq
    C2^{-k}\bigl||\zeta+\xi|-|\xi|\bigr|
    \leq
    C2^{-k}|\zeta|$.
Combining $|\xi| \leq 2^{\ell +2}$ by \eqref{e:Fourier support of littlewood partition}, there is a universal constant $C$ such that
\begin{align*}
    \left|\widehat{[\proj_k,V_j]h_\ell}(\omega)\right|
    \leq
    C2^{\ell-k}
    \sum_{\zeta+\xi=\omega}
        |\zeta|\,|\widehat{V_j}(\zeta)|\,|\widehat{h_\ell}(\xi)|.
\end{align*}
Combining Parseval's identity and Young's convolution inequality,
\begin{align*}
    \left\|[\proj_k,V_j]h_\ell \right\|_2
    \leq
    C2^{\ell-k}
    \left(
         \sum_{\zeta \in \bZ^d}
        |\zeta|\,|\widehat{V_j}(\zeta)|
    \right)
    \|h_\ell\|_2.
\end{align*}
For every $j\geq-1$, apply \eqref{e:Fourier support of littlewood partition} to $\widehat{V_j}$, and use the Cauchy--Schwarz inequality and Parseval's identity, 
\begin{align*}
    \sum_{\zeta \in \bZ^d}
        |\zeta|\,|\widehat{V_j}(\zeta)| \leq
    \left(
        \sum_{|\zeta |\leq 4 \cdot 2^j}|\zeta|^2
    \right)^{1/2}
    \|V_j\|_2\leq
    C\cdot 2^{j(d/2+1)}\|V_j\|_2,
\end{align*}
where $C $ is a constant depending on $d$. This completes the proof of \Cref{lem:torus dyadic Fourier commutator}.
\end{proof}

We now prove the dyadic transport estimate \Cref{lem:torus dyadic transport}. This is the point at which
working on the torus makes the argument transparent. For any function $h$ and vector field $V$ on $M = \TT^d$, we denote
    \begin{align}\label{e:def dyadic transport}
        \cT_k[V,h]
    \coloneqq [\proj_k,V](h) = 
    \proj_k\bigl(V(h)\bigr) - V(h_k).
    \end{align}

\begin{lemma}[Dyadic transport remainder on $\TT^d$]
\label{lem:torus dyadic transport}
Let $M = \TT^d$. Take any $T \geq 0$, $\sigma> d/2$. Then for a time-dependent function $H(t,x)$ and a vector field $V(t,x)$ on $[0,T]\times M$, 
\begin{align}\label{e:torus dyadic transport estimate}
    \sup_{k\geq-1}2^{\sigma k}
    \int_0^T\|\mathcal T_k[V,H](t,\cdot)\|_2\,\de t                 \leq
    C
    \left(\sup_{0\leq t\leq T}\|H(t,\cdot)\|_{B^\sigma}\right)
    \|V\|_{B_T^{\sigma+1}},
\end{align}
where $C$ is a constant depending only on $\sigma,d$.
\end{lemma}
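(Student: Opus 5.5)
We decompose $V=\sum_{j\geq-1}V_j$ and $H=\sum_{\ell\geq-1}H_\ell$, so that for each fixed $t$
\begin{align*}
    \cT_k[V,H]=\sum_{j,\ell\geq-1}\Bigl(\proj_k\bigl(V_j(H_\ell)\bigr)-V_j\bigl(\proj_kH_\ell\bigr)\Bigr)=\sum_{j,\ell\geq-1}[\proj_k,V_j]H_\ell .
\end{align*}
We then organize the double sum in the Bony fashion of \Cref{lem:Besov product lemma}. The three regimes are: (i) low-frequency velocity, $j\leq\ell-4$; (ii) high-frequency velocity, $\ell\leq j-4$; and (iii) comparable frequencies, $|j-\ell|\leq3$. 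Fourier support considerations from \eqref{e:Fourier support of littlewood partition} restrict which $k$ can interact. In regime (i), $V_j(H_\ell)$ and $V_j(\proj_kH_\ell)$ both have frequencies in an annulus comparable to $2^\ell$, so only $|k-\ell|\leq C$ contributes. In regime (ii), the output frequency is comparable to $2^j$. Moreover, $\proj_kH_\ell$ vanishes unless $|k-\ell|\leq1$, so the commutator splits into $\proj_k(V_j(H_\ell))$ with $|k-j|\leq C$, and $V_j(\proj_kH_\ell)$ with $|k-\ell|\leq1$. In regime (iii), the output frequency is $\lesssim2^\ell$, so $k\leq\ell+C$.

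Regime (i) is where the commutator structure is essential, and it is the main obstacle: a naive bound would lose the derivative on $H_\ell$, i.e.\ a factor $2^\ell$. Here we apply \Cref{lem:torus dyadic Fourier commutator} directly, which gives the bound $C2^{\ell-k}2^{j(d/2+1)}\|V_j\|_2\|H_\ell\|_2$. Since $|k-\ell|\leq C$, the factor $2^{\ell-k}$ is $O(1)$. Summing over $j\leq\ell-4$ we use
\begin{align*}
    \sum_j2^{j(d/2+1)}\|V_j\|_2\leq C\,2^{-(\sigma-d/2)\cdot0}\|V\|_{B^{\sigma+1}},
\end{align*}
which is a convergent geometric sum because $\sigma+1>d/2+1$. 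This gives a contribution of at most $C\,2^{-\sigma k}\|H\|_{B^\sigma}\cdot\sum_j2^{j(\sigma+1)}\|V_j\|_2\,2^{-j(\sigma-d/2)}$. After integrating in time and taking $\sup_t\|H\|_{B^\sigma}$, this is bounded by $2^{-\sigma k}\sup_t\|H\|_{B^\sigma}\|V\|_{B^{\sigma+1}_T}$, up to the geometric factor $\sum_j 2^{-j(\sigma-d/2)}$, with each $\int_0^T2^{j(\sigma+1)}\|V_j\|_2\,\de t$ bounded by $\|V\|_{B_T^{\sigma+1}}$.

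Regimes (ii) and (iii) need no commutator. We bound each term separately using $\|V_j(H_\ell)\|_2\leq\|V_j\|_2\|\nabla H_\ell\|_\infty$, together with Bernstein (\Cref{lem:Bernstein inequalities}): $\|\nabla H_\ell\|_\infty\lesssim2^{\ell(d/2+1)}\|H_\ell\|_2\lesssim2^{\ell(d/2+1-\sigma)}\|H\|_{B^\sigma}$.

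In regime (ii) the derivative falls on the lower frequency $\ell<j$. For the term $\proj_k(V_j(H_\ell))$ we have $k\approx j$, and its contribution to $2^{\sigma k}\|\cdot\|_2$ is at most $2^{\sigma j}\|V_j\|_2\sum_{\ell<j}2^{\ell(d/2+1-\sigma)}\|H\|_{B^\sigma}$. This is $\lesssim2^{(\sigma+1)j}\|V_j\|_2\|H\|_{B^\sigma}$ in all cases, since the sum over $\ell$ is at most $C\max(1,2^{j(d/2+1-\sigma)})\leq C2^{j}$, using $\sigma>d/2$. For the term $V_j(\proj_kH_\ell)$ with $k\approx\ell<j$, we bound it by $\|V_j\|_2 2^{\ell(d/2+1)}\|H_\ell\|_2$ and sum over $j>\ell$. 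This uses $2^{\sigma k}2^{k(d/2+1-\sigma)}\|V_j\|_2\leq2^{(\sigma+1)j}\|V_j\|_2\,2^{-(j-k)(\sigma+1)+k(d/2)}$. Here the factor $2^{kd/2}$ is problematic, so instead we put $V_j$ in $L^\infty$ via Bernstein and $\nabla H_\ell$ in $L^2$. That gives $2^{\sigma k}2^{jd/2}\|V_j\|_2\,2^{k}\|H_k\|_2\leq 2^{(\sigma+1)j}\|V_j\|_2\|H\|_{B^\sigma}2^{k-j(\sigma+1-d/2)}$, which is summable over $j>k$.

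In regime (iii), with $k\leq\ell+C$, the bound $2^{\sigma k}\|V_j\|_\infty2^\ell\|H_\ell\|_2\lesssim2^{\sigma(k-\ell)}2^{jd/2+j}\|V_j\|_2\|H\|_{B^\sigma}$ again uses the Bernstein $L^\infty$ bound on $V_j$. The sum over $\ell\geq k-C$ converges geometrically, and the remaining factor $2^{j(d/2+1)}\leq2^{j(\sigma+1)}$.

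Finally, we integrate in time, take $\sup_t\|H\|_{B^\sigma}$ outside, and take $\sup_k$, which yields \eqref{e:torus dyadic transport estimate}. The delicate bookkeeping is entirely in regime (i), where \Cref{lem:torus dyadic Fourier commutator} must absorb the derivative on the high-frequency $H_\ell$.
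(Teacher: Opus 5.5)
Your proof is correct and is essentially the paper's argument. The commutator estimate \Cref{lem:torus dyadic Fourier commutator} is used only when $V_j$ sits at much lower frequency than $H_\ell$, which forces $|k-\ell|\le 2$. All remaining pairs are handled by bounding $\proj_k(V_j(H_\ell))$ and $V_j(\proj_kH_\ell)$ separately, using Bernstein and the extra derivative in $\|V\|_{B_T^{\sigma+1}}$; the paper treats these as a single region $j>\ell-6$ rather than your regimes (ii) and (iii).

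One small correction: in regime (ii), the direct bound $\|V_j\|_2\|\nabla\proj_kH_\ell\|_\infty$ that you abandoned actually works. Indeed $2^{\sigma k}2^{k(d/2+1-\sigma)}=2^{(\sigma+1)j}2^{-(j-k)(\sigma+1)}2^{k(d/2-\sigma)}$, so the leftover factor is the harmless $2^{k(d/2-\sigma)}$, not $2^{kd/2}$. This is exactly the paper's estimate of $I_3$, though your alternative with $V_j$ in $L^\infty$ is also fine.
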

\begin{proof}
Denote $\cH = \sup_{0\leq t\leq T}\|H(t,\cdot)\|_{B^\sigma}$, $
    \cV =
    \|V\|_{B_T^{\sigma+1}}$. 
Fix a time $t \in [0,T]$ and a $k \geq -1$. Using \eqref{e:partition of L^2 function}, we can expand $H$ and $V$ into the forms
\begin{align*}
    H=\sum_{\ell\geq-1}H_\ell,
    \qquad
    V=\sum_{j\geq-1}V_j .
\end{align*}
Then,
\begin{align}\label{e:full transport direct commutator}
    \cT_k[V,H]
    =
    \sum_{j,\ell\geq-1}
    [\proj_k,V_j]H_\ell = \sum_{j\leq\ell-L}
    [\proj_k,V_j]H_\ell + \sum_{j>\ell-L}
    [\proj_k,V_j]H_\ell \eqqcolon I_1 + I_2,
\end{align}
where we fix $L=6$. In the following, all summations are for indices $j,\ell \geq -1$.

We first estimate $I_1$. In this region, $V_j$ has
frequency much lower than $H_\ell$.  Write $\zeta$ for a frequency of
$V_j$ and $\xi$ for a frequency of $H_\ell$. By
\eqref{e:Fourier support of littlewood partition}, for $\ell\geq L+j > 0$,
\begin{align*}
    |\zeta|<2^{\ell-L+2},
    \quad
    2^{\ell}<|\xi|<2^{\ell+2}.
\end{align*}
Hence every frequency $\omega=\zeta+\xi$ of
$V_j\left( H_{\ell} \right)$ satisfies
\begin{align}\label{e:I1 product Fourier annulus}
    \left(1-2^{2-L}\right)2^{\ell}
    <|\omega|<
    \left(4+2^{2-L}\right)2^{\ell}.
\end{align}
Because $L=6$, the annulus in
\eqref{e:I1 product Fourier annulus} is disjoint from the Fourier
support of $\proj_k$ if $|\ell-k|\geq3$. Thus,
\begin{align*}
    \proj_k\big(V_j\left( H_\ell \right)\big)=0,
    \quad\text{if }|\ell-k|\geq3.
\end{align*}
Also,
    \begin{align*}
        \proj_k H_\ell=0,
    \quad\text{if }|\ell-k|\geq2.
    \end{align*}
Therefore, the summation in $I_1$ is for $\ell$ such that $|\ell - k | \leq 2$, and thus
    \begin{align}\label{e:control I1 in dyadic transport decomposition}
        \begin{split}
            &2^{\sigma k} \int_0^T \| I_1 \|_2 \, \de t \leq   2^{\sigma k} \sum_{|\ell - k| \leq 2}\sum_{j\leq\ell-L}
    \int_0^T \| [\proj_k,V_j]H_\ell \|_2 \, \de t
    \\
    &\overset{ \Cref{lem:torus dyadic Fourier commutator}}{\leq}
    C_d \cdot 2^{\sigma k}
    \sum_{|\ell-k|\leq2}2^{\ell-k}
       \cdot \sup_{0\leq t\leq T}\|H_\ell(t)\|_2 \cdot
    \sum_{j\leq k+2-L}2^{j(d/2+1)}
        \int_0^T\|V_j(t)\|_2\,\de t          \\
    &\leq
    C_d  
    \sum_{|\ell-k|\leq2}2^{\sigma(k-\ell)+\ell-k} \cdot \cH \cdot 
    \sum_{j\leq k+2-L}2^{j(d/2-\sigma)} \cdot \cV    \leq C_{\sigma,d} \cdot \cH \cdot \cV,
        \end{split}
    \end{align}
where the last geometric series is bounded by a constant $C_{\sigma,d}$ because
$\sigma>d/2$.

We then estimate $I_2$. First,
\begin{align*}
        I_2 = \sum_{j>\ell-L}
        [\proj_k,V_j]H_\ell
    = \sum_{j>\ell-L}
        -V_j(\proj_kH_\ell)
    +
        \sum_{j>\ell-L}
        \proj_k\bigl(V_j(H_\ell)\bigr)
    \eqqcolon I_{3}+I_{4}.
\end{align*}
For $I_3$, since $\proj_kH_\ell=0$ if $|\ell-k|\geq 2$, the condition
$j>\ell-L$ implies $j\geq k-L$ in the sum defining $I_3$. Thus,
\begin{align}\label{e:control I3 in dyadic transport decomposition}
    \begin{split}
        &2^{\sigma k} \int_0^T \| I_3 \|_2 \, \de t \leq  2^{\sigma k} \sum_{|\ell - k| \leq 2}\sum_{j\geq k-L}
    \int_0^T \|V_j(\proj_kH_\ell) \|_2 \, \de t
    \\
    &\leq
    2^{\sigma k}
    \sum_{|\ell-k|\leq2}
        \sup_{0\leq t\leq T}
        \|\nabla\proj_k H_\ell(t)\|_\infty
    \sum_{j\geq k-L}
        \int_0^T\|V_j(t)\|_2\,\de t
        \\ & \overset{\Cref{lem:Bernstein inequalities}}{\leq} C_{d} \cdot 2^{\sigma k}
    \sum_{|\ell-k|\leq2}
        \sup_{0\leq t\leq T}
        2^{(1+d/2)k} \cdot \|\proj_k H_\ell(t)\|_2
    \sum_{j\geq k-L}
        \int_0^T\|V_j(t)\|_2\,\de t
    \\
    &\leq
    C_d \cdot \cH \cdot 
    2^{(1+d/2)k}
    \sum_{j\geq k-L}
        2^{-(\sigma+1)j} \cdot \cV
    \leq
    C_{\sigma,d}\cdot \cH \cdot \cV \cdot 
    2^{(d/2-\sigma)k}
    \leq C_{\sigma,d}\cdot \cH \cdot \cV ,
    \end{split}
\end{align}
where the last inequality is because $\sigma > d/2$. For $I_4$,  because $j>\ell-L$, every
frequency $\omega$ of $V_j \left(H_\ell \right)$ satisfies
\begin{align*}
    |\omega| \leq 2^{j+2}+2^{\ell+2} \leq 2^{j+2}+2^{j+L+1}
    <2^{j+L+2}.
\end{align*}
It follows that
\begin{align}\label{e:I4 exact upper restriction}
    \proj_k(V_j \left(H_\ell \right))=0
    \quad\text{if }k\geq j+L+2.
\end{align}
There is no general lower restriction on $k$, since two frequencies from $V_j,H_{\ell}$ may cancel. Also,
\begin{align*}
    \|\proj_k(V_j \left(H_\ell \right))\|_2
    \leq \|V_j(H_\ell)\|_2 \leq
    \|V_j\|_2\|\nabla H_\ell\|_\infty \overset{\Cref{lem:Bernstein inequalities}}{\leq}
    C_d \cdot 2^{(1+d/2)\ell}
    \|V_j\|_2\|H_\ell\|_2.
\end{align*}
Combining this with \eqref{e:I4 exact upper restriction},
\begin{align}\label{e:control I4 in dyadic transport decomposition}
   \begin{split}
       & 2^{\sigma k}\int_0^T \|I_4\|_2 \,\de t \leq C_d \cdot 2^{\sigma k} \sum_{j > k-L - 2} \sum_{\ell < j+ L }  2^{(1+d/2)\ell}
    \sup_{0 \leq t \leq T}\|H_\ell(t)\|_2 \cdot \int_0 ^T  \|V_j(t)\|_2 \, \de t
    \\ &\leq
    C_d \cdot \cH \cdot \cV \cdot 
    2^{\sigma k}
    \sum_{j > k-L - 2} 2^{-(\sigma+1)j} \sum_{\ell < j+ L }
        2^{(1+d/2-\sigma)\ell}
    \\ & \leq C_d \cdot \cH \cdot \cV \cdot 
    2^{\sigma k}
    \sum_{j > k-L - 2} 2^{-\sigma j} \sum_{\ell < j+ L }
        2^{(d/2-\sigma)\ell} 
        \\ &\leq  C_d \cdot \cH \cdot \cV \cdot \sum_{k-j < L+2} 2^{\sigma (k-j)} \cdot \sum_{\ell= -1 } ^{\infty}
        2^{(d/2-\sigma)\ell} \leq C_{\sigma,d} \cdot \cH \cdot \cV
   \end{split}
\end{align}
where in the third inequality, we used $2^{\ell} < 2^{j +L}$ when $\ell < j+L$ and $L=6$, and the last inequality is because by $\sigma > d/2$,
    \begin{align*}
        \sum_{\ell= -1 } ^{\infty}
        2^{(d/2-\sigma)\ell} < \infty, \quad \text{ and } \sum_{k-j < L+2} 2^{\sigma (k-j)} \leq \sum_{m=-\infty} ^{L+1} 2^{\sigma m} < \infty .
    \end{align*}

Combining \Cref{e:control I1 in dyadic transport decomposition,e:control I3 in dyadic transport decomposition,e:control I4 in dyadic transport decomposition} and taking the
supremum over $k\geq-1$ proves
\eqref{e:torus dyadic transport estimate}.
\end{proof}

\subsection{Proof of \Cref{prop:eulerian uniform estimate}}\label{s:proof of blockwise persistence}

We now apply the preceding lemmas to prove \Cref{prop:eulerian uniform estimate}.

\begin{proof}[Proof of \Cref{prop:eulerian uniform estimate}]

Set $v_t\coloneqq\varepsilon\cQ[f_t]$. Then \eqref{e:wass grad flow perturbation} becomes
\begin{align}\label{e:wass grad flow perturbation 2}
\partial_t f_t+\cA[f_t]
=
\operatorname{div}(f_t v_t).
\end{align}
We first justify the regularity needed for the estimates below. Since
$\sigma>d/2$, \eqref{e:Besov norm control infinity norm} implies that
$B^\sigma(M)\hookrightarrow C(M)$. After decreasing $c_0$ if
necessary, \eqref{e:eulerian smallness condition} therefore ensures that
$\varepsilon \|f_0\|_{\infty }\leq1/2$. Because $f_0$ has zero mean,
$u_0=1+\varepsilon f_0$ is a probability density. Hence
\cite[Proposition~2.8]{chizat2026quantitative}, applied to the original
Wasserstein gradient flow \eqref{e:wass grad flow}, gives a solution
$u_t=1+\varepsilon f_t$ on a nontrivial time interval.

Let $T$ lie within the maximal interval of existence time of $u_t$. To avoid assuming
a priori that $\cX_T$ and $\cY_T$ are finite, fix
$\sigma_0\in(d/2,\sigma)$ and take
$\sigma_2\in[\sigma_0,\sigma)$. Choose
$\sigma_1\in(\sigma_2,\min\{\sigma,\sigma_2+1\})$. Since $B^\sigma(M)\hookrightarrow H^{\sigma_1}(M)$, the propagation of Sobolev regularity in
\cite[Proposition~2.14]{chizat2026quantitative} gives $f_t \in L^\infty 
\bigl([0,T],H^{\sigma_1}(M)\bigr)$. The embeddings
$H^{\sigma_1}(M)\hookrightarrow B^{\sigma_2}(M)$ and
$H^{\sigma_1+r}(M)\hookrightarrow B^{\sigma_2}(M)$, together with the
fact that $\cA$ has order $-r$, imply that
\begin{align*}
\cX_T^{\sigma_2}
\coloneqq
\sup_{0\leq t\leq T}\|f_t\|_{B^{\sigma_2}}
<\infty,
\quad
\cY_T^{\sigma_2}
\coloneqq
\|\cA[f_t]\|_{B_T^{\sigma_2}}
<\infty.
\end{align*}
Also, \eqref{e:wass grad flow perturbation 2} implies that $\partial_t f_t \in L^\infty
\bigl([0,T],H^{\sigma_1-1}(M)\bigr)$. The interpolation between $H^{\sigma_1-1}(M)$ and $H^{\sigma_1}(M)$ then gives $f_t \in C
\bigl([0,T],H^{\sigma_2}(M)\bigr)$ for any $\sigma_2 \in (\sigma_1 - 1 , \sigma_1)$, and thus $f_t \in C
\bigl([0,T],B^{\sigma_2}(M)\bigr)$. Thus, $\cX_T^{\sigma_2},\cY_T^{\sigma_2}$ are continuous in $T$. We may therefore apply the rest of the proof below with $\sigma$ replaced by
$\sigma_2$. Because $\sigma_2\in[\sigma_0,\sigma)$, all constants in the
estimates can be chosen depending only on $s,\sigma,d$ and independent of $\sigma_2$. The resulting subcritical analogue of \eqref{e:eulerian global bound 2} is
\begin{align*}
\cX_T^{\sigma_2}+\cY_T^{\sigma_2}
\leq
C_0\cX_0,
\end{align*}
where $C_0$ depends only on $s,\sigma,d$ and is independent of
$T$ and $\sigma_2$. Letting $\sigma_2\to \sigma$ blockwise and then
taking the supremum over the dyadic indices yields the desired $\cX_T+\cY_T\leq C_0\cX_0$ in \eqref{e:eulerian global bound}. For notational simplicity, we write the
estimates below directly with the exponent $\sigma$, with this
subcritical limiting argument understood. We thus assume that $\cX_T$ and $\cY_T$ are finite and continuous in $T$ in the following arguments.

We first analyze the dyadic action of $\cA$. Because $u_t = 1+\varepsilon f_t$ solves the Wasserstein gradient flow \eqref{e:wass grad flow}, $f_t$ remains mean-zero, i.e., $\widehat{f_t}(0) = 0$. The symbol of
\begin{align*}
    \cA=(-\Delta)^{1-s}\circ\Pi_0
\end{align*}
on the nonzero Fourier mode $\xi \in\bZ^d\setminus\{0\}$ is
$|\xi|^{-r}$. For any $k \geq -1$, if $\xi \in\bZ^d\setminus\{0\}$ is a frequency such that $\widehat{\proj_k f_t}(\xi) \neq 0$, then \eqref{e:Fourier support of littlewood partition} implies that
\begin{align}\label{e:uniform dyadic frequency scale}
    2^k < |\xi|< 4\cdot 2^{k}.
\end{align}
Thus, for any $k \geq -1$, Plancherel's identity gives
\begin{align}\label{e:A block coercivity}
    \begin{split}
        C_s ^{-1} \cdot \omega_k \left\|\proj_k f_t \right\|_2 ^2
    &\leq
    \langle\cA [\proj_k f_t],\proj_k f_t\rangle
    \leq
    C_s \cdot \omega_k \left\|\proj_k f_t \right\|_2^2,
    \\
    C_s ^{-1} \cdot \omega_k \left\|\proj_k f_t \right\|_2
    &\leq
    \left\|\cA [\proj_k f_t] \right\|_2
    \leq
    C_s \cdot \omega_k \left\|\proj_k f_t \right\|_2,
    \end{split}
\end{align}
where $C_s > 1$ is a constant depending only on $s$.

Because $v_t=\varepsilon\cQ[f_t] =\varepsilon\cD\cA [f_t]$, $\cD= \nabla(-\Delta)^{-1}\Pi_0$, whose symbol has modulus $|\xi|^{-1}$. Hence, by \eqref{e:uniform dyadic frequency scale}, there is a constant $C_d $ such that for any $j\geq-1$,
\begin{align*}
    \|\proj_j v_t\|_2
    \leq
    C_d \cdot \varepsilon2^{-j}
    \|\proj_j(\cA [f_t])\|_2.
\end{align*}
Thus,
    \begin{align}\label{e:velocity Chemin bound}
        \|v_t\|_{B_T^{\sigma+1}} = \sup_{j\geq-1}2^{j (\sigma+1)}
    \int_0^T \left\|\proj_j v_t \right\|_2\,\de t \leq C_d \cdot \varepsilon \cdot \sup_{j\geq-1}2^{j \sigma} \int_0^T \|\proj_j(\cA [f_t])\|_2 \,\de t = C_d \cdot \varepsilon \cdot \cY_T .
    \end{align}
Since $\operatorname{div}v_t =-\varepsilon\cA[f_t]$, we have that $\operatorname{div} v_t = -\varepsilon \sum_{j=-1} ^{\infty} \proj_j \left(\cA [f_t] \right) $. Thus
\begin{align}\label{e:principal divergence bound}
    \begin{split}
        &\int_0^T
    \|\operatorname{div} v_t\|_\infty\,\de t \leq
    \varepsilon
    \sum_{j=-1} ^{\infty}
        \int_0^T\|\proj_j \left(\cA [f_t] \right)\|_{\infty} \,\de t
    \overset{\eqref{e:Bernstein inequality 1}}{\leq}
    C_d \cdot \varepsilon
    \sum_{j=-1} ^{\infty}
        2^{jd/2}
        \int_0^T\|\proj_j \left(\cA [f_t] \right)\|_2\,\de t
    \\
    &\leq
    C_d \cdot \varepsilon 
    \sum_{j=-1} ^{\infty}
        2^{(d/2-\sigma)j} \cdot \cY_T
    \leq
    C_{\sigma,d} \cdot \varepsilon \cdot \cY_T,
    \end{split}
\end{align}
where the last geometric series is bounded by a constant $C_{\sigma,d}$ because $\sigma >d/2$.

We apply $\proj_k$ to \eqref{e:wass grad flow perturbation 2}. Notice that $\proj_k$ commutes with $\cA$ and $\operatorname{div}v_t =-\varepsilon\cA[f_t]$. Thus, we obtain
\begin{align}\label{e:projected Eulerian equation}
    \partial_t (\proj_k f_t)
    - v_t\left(\proj_k f_t \right)
    +
    \cA \left[\proj_k f_t \right]
    =
    \cR_k(t),
\end{align}
where $\cR_k(t)
    \coloneqq
    \mathcal T_k[v_t,f_t]
    -\varepsilon\proj_k\left(f_t \cA[f_t] \right)$.
Then,
\begin{align*}
    \sup_{k\geq-1}2^{\sigma k}
    \int_0^T
        \|\cT_k[v_t,f_t]\|_2\,\de t \overset{\Cref{lem:torus dyadic transport}}{\leq} C_{\sigma,d} \cdot \cX_T \cdot \|v_t\|_{B_T^{\sigma+1}} \overset{\eqref{e:velocity Chemin bound}}{\leq}
    C_{\sigma,d} \cdot \varepsilon \cdot \cX_T \cdot \cY_T, 
\end{align*}
and
\begin{align*}
    \begin{split}
        &\sup_{k\geq-1}2^{\sigma k}
    \int_0^T
        \varepsilon \cdot \left\|\proj_k\left(f_t \cA[f_t] \right) \right\|_2\,\de t = 
    \varepsilon  \cdot \left\|f_t \cA[f_t] \right\|_{B_T^\sigma}
    \\ & \overset{\Cref{lem:Besov product lemma}}{\leq}
    C_{\sigma,d} \cdot \varepsilon \cdot  \left(\sup_{0 \leq t \leq T}\left\|f_t \right\|_{B^\sigma} \right) \cdot \left\|\cA[f_t] \right\|_{B_T^\sigma} = C_{\sigma,d} \cdot \varepsilon \cdot \cX_T \cdot \cY_T .
    \end{split}
\end{align*}
Consequently,
\begin{align}\label{e:Eulerian remainder bound}
    \sup_{k\geq-1}2^{\sigma k}
    \int_0^T\|\cR_k(t)\|_2\,\de t
    \leq
    C_{\sigma,d} \cdot \varepsilon \cdot \cX_T \cdot \cY_T .
\end{align}

Then, we estimate $\|\proj_k f_t \|_2$. Denote
\begin{align*}
    x_k(t)\coloneqq\|\proj_k f_t\|_2,
    \quad
    y(t)
    \coloneqq
    \frac{1}{2} \|\operatorname{div} v_t\|_\infty,
    \quad z_k(t)\coloneqq\|\cR_k(t)\|_2.
\end{align*}
By \eqref{e:projected Eulerian equation}, we have that
\begin{align}\label{e:exact block energy identity}
    \begin{split}
        \frac{1}{2}\frac{\de}{\de t}x_k^2
    +
    \langle\cA[\proj_k f_t],\proj_k f_t\rangle
    &=
    \frac{1}{2}
    \int_{M}
        \langle v_t, \nabla (\proj_k f_t)^2 \rangle \,\de x
    + \int_M \cR_k \cdot  (\proj_k f_t) \, \de x 
    \\ &= -\frac{1}{2}
    \int_{M}
        \operatorname{div}\left( v_t \right) \left|\proj_k f_t \right|^2  \,\de x
    +
    \int_M \cR_k \cdot  (\proj_k f_t) \, \de x.
    \end{split}
\end{align}
By \eqref{e:A block coercivity} and H\"older's inequality, we obtain
\begin{align}
    x_k'+C_s^{-1} \cdot \omega_k\cdot x_k
    &\leq
    y \cdot x_k+z_k,
    \label{e:upper scalar block inequality}
    \\
    x_k'+C_s \cdot \omega_k\cdot x_k
    &\geq
    -y \cdot x_k-z_k.
    \label{e:lower scalar block inequality}
\end{align}
These inequalities remain valid at times when $x_k=0$: one can apply
\eqref{e:exact block energy identity} to
$(x_k^2+\delta^2)^{1/2}$ and then let $\delta \to 0$. Now, let
\begin{align*}
    Y(t)\coloneqq\int_0^t y(r)\,\de r.
\end{align*}
For any $t \in [0,T]$, by \eqref{e:principal divergence bound}, $Y(t) \leq Y(T) \leq C_{\sigma,d} \cdot \varepsilon \cdot \cY_T$. Then, using \eqref{e:upper scalar block inequality}, a Gr\"onwall argument yields that 
\begin{align}\label{e:x_k(t) upper bound integral}
    \begin{split}
        x_k(t)
    &\leq
    e^{Y(T)}
    \left(
        e^{-C_s^{-1} \cdot \omega_k \cdot t}x_k(0)
        +
        \int_0^t
            e^{-C_s^{-1} \cdot \omega_k \cdot(t-r)}z_k(r)\,\de r
    \right)
    \\ &\overset{\eqref{e:principal divergence bound}}{\leq} e^{C_{\sigma,d} \cdot \varepsilon \cdot \cY_T}
    \left(
        e^{-C_s^{-1} \cdot \omega_k \cdot t}x_k(0)
        +
        \int_0^t
            e^{-C_s^{-1} \cdot \omega_k \cdot(t-r)}z_k(r)\,\de r
    \right).
    \end{split}
\end{align}
Thus, \eqref{e:x_k(t) upper bound integral} implies that
    \begin{align}\label{e:X_T Y_T integral bound 1}
        \sup_{0\leq t\leq T}x_k(t) \leq e^{C_{\sigma,d} \cdot \varepsilon \cdot \cY_T}
    \left(x_k(0) + \sup_{0\leq t\leq T} \int_0 ^t z_k(r)\,\de r \right) \leq e^{C_{\sigma,d} \cdot \varepsilon \cdot \cY_T}
    \left(x_k(0) +  \int_0 ^T z_k(r)\,\de r \right),
    \end{align}
and
    \begin{align}\label{e:X_T Y_T integral bound 2}
         \begin{split}
             \omega_k\int_0^Tx_k(t)\,\de t &\leq e^{C_{\sigma,d} \cdot \varepsilon \cdot \cY_T}
    \left(C_s \cdot x_k(0) +  \omega_k \int_0^T z_k(r) \int_r ^T
            e^{-C_s^{-1} \cdot \omega_k \cdot(t-r)}\,\de t \de r\right) 
            \\ &\leq C_{s} \cdot e^{C_{\sigma,d} \cdot \varepsilon \cdot \cY_T}
    \left(x_k(0) +  \int_0 ^T z_k(r)\,\de r  \right).
         \end{split}
    \end{align}
Multiplying \eqref{e:X_T Y_T integral bound 1} and \eqref{e:X_T Y_T integral bound 2} by $2^{\sigma k}$,
taking the supremum over $k$, and using
\eqref{e:A block coercivity} and
\eqref{e:Eulerian remainder bound}, we obtain that 
\begin{align}\label{e:Eulerian bootstrap inequality}
    \cX_T+\cY_T
    \leq
    C_{\ast} \cdot e^{ C_{\ast} \cdot \varepsilon \cdot \cY_T}
    \left(
        \cX_0+ C_{\ast} \cdot \varepsilon \cdot \cX_T \cdot \cY_T
    \right).
\end{align}
where $C_{\ast}>1$ is a constant depending only on $s,\sigma,d$.

Now, we can show that the solution $f_t$ exists for all $t \in [0,\infty)$ and \eqref{e:eulerian global bound} holds. Indeed, for the constant $C_{\ast}$ in \eqref{e:Eulerian bootstrap inequality}, we set 
\begin{align*}
    \calZ_T\coloneqq \cX_T+\cY_T,
    \quad
    M_0\coloneqq 4C_{\ast}\cX_0.
\end{align*}
If $\calZ_T\leq M_0$, then
\begin{align*}
    \cX_T \cY_T\leq \calZ_T^2\leq M_0^2.
\end{align*}
Choose $c_0$ in \eqref{e:eulerian smallness condition} sufficiently
small so that whenever $\varepsilon \cX_0\leq c_0$, we have that
\begin{align*}
    e^{C_{\ast}\varepsilon M_0}\leq \frac{3}{2} ,
    \quad
    C_{\ast} \varepsilon M_0^2\leq\frac{1}{3} \cX_0
\end{align*}
 Then
\eqref{e:Eulerian bootstrap inequality} improves $\calZ_T\leq M_0$ to
\begin{align*}
    \calZ_T
    \leq
    C_{\ast} \cdot \frac{3}{2}
    \left(
        \cX_0+\frac{1}{3}\cX_0
    \right)
    =
    2C_{\ast} \cX_0
    =
    \frac{1}{2}M_0.
\end{align*}
Because $\calZ_0 = \cX_0 < M_0$, and $\calZ_T \leq M_0$ also gives a uniform bound on $\sup_{0 \leq t \leq T}\|f_t\|_{\infty}$ by \eqref{e:Besov norm control infinity norm}, the standard first-exit argument (see for example \cite[Proposition 2.9]{chizat2026quantitative}) gives the existence of $f_t$ on $[0,\infty)$, and for any $T \geq 0$,
    \begin{align}\label{e:eulerian global bound 2}
         \cX_T+ \cY_T\leq  M_0 \leq 4 C_{\ast} \cX_0,
    \end{align}
which implies \eqref{e:eulerian global bound}.

We finally establish \eqref{e:dyadic persistence quantitative} and \eqref{e:dyadic persistence simple}. By \eqref{e:lower scalar block inequality},
\begin{align}\label{e:x_k(t) lower bound integral}
    \begin{split}
        x_k(t)
    &\geq 
    e^{-C_s\cdot \omega_kt-Y(t)}x_k(0)
    -
    \int_0^t
        e^{-C_s \cdot \omega_k(t-r)-(Y(t)-Y(r))}
        z_k(r)\,\de r
    \\ &\geq e^{-C_s\cdot \omega_kt-Y(t)}x_k(0)
    -
    \int_0^t
        z_k(r)\,\de r .
    \end{split}
\end{align}
Notice that
    \begin{align*}
        Y(t) \overset{\eqref{e:principal divergence bound}}{\leq} C_{\sigma,d} \cdot \varepsilon \cdot \cY_T \overset{ \eqref{e:eulerian global bound 2}}{\leq}
    C_{\sigma,s,d} \cdot \varepsilon \cdot  \cX_0,
    \end{align*}
and
    \begin{align*}
        \int_0^t
        z_k(r)\,\de r \overset{\eqref{e:Eulerian remainder bound}}{\leq}  C_{\sigma,d} \cdot \varepsilon \cdot \cX_T \cdot \cY_T \cdot 2^{-\sigma k} \overset{ \eqref{e:eulerian global bound 2}}{\leq} C_{\sigma,s,d} \cdot \varepsilon \cdot \cX_0 ^2 \cdot 2^{-\sigma k}
    \end{align*}
Then, by \eqref{e:x_k(t) lower bound integral}, we obtain
\begin{align}\label{e:dyadic persistence all time}
    x_k(t)
    \geq
    e^{-C_{\sigma,s,d}(\omega_kt+\varepsilon \cX_0)}
    x_k(0)
    -
     C_{\sigma,s,d} \cdot \varepsilon \cdot \cX_0 ^2 \cdot 2^{-\sigma k},
\end{align}
which implies \eqref{e:dyadic persistence quantitative}. For \eqref{e:dyadic persistence simple}, choose $\tau_0>0$ small and further decrease $c_0$ so that for the constant $C_{\sigma,s,d}$ in \eqref{e:dyadic persistence all time}, we have that
\begin{align*}
    e^{-C_{\sigma,s,d}\left(\tau_0+c_0\right)}\geq\frac34.
\end{align*}
Assume \eqref{e:dyadic persistence initial}, i.e., assume that there is a constant $\gamma >0$ such that
\begin{align*}
    x_k(0) \geq\gamma2^{-\sigma k},
    \qquad
    \varepsilon \cX_0^2\leq c_1\gamma,
\end{align*}
where $c_1$ is chosen small so that for the constant $C_{\sigma,s,d}$ in \eqref{e:dyadic persistence all time}, $C_{\sigma,s,d} \cdot c_1\leq1/4$. Then for
$0\leq t\leq\tau_0\omega_k^{-1}$, by \eqref{e:dyadic persistence all time}, 
\begin{align*}
    x_k(t) \geq
    \left(
        \frac{3}{4}\gamma-C_{\sigma,s,d} \cdot c_1 \gamma
    \right)2^{-\sigma k} \geq
    \frac{1}{2}\gamma 2^{-\sigma k},
\end{align*}
which proves
\eqref{e:dyadic persistence simple}.

\end{proof}


\subsection{Proof of \Cref{thm:wass -1-delta rate intro} and \Cref{cor:uniform perturbation Besov norms}}\label{s:proof of wass -1-delta rate and uniform-in-time perturbation}

We first prove \Cref{cor:uniform perturbation Besov norms} by combining \Cref{prop:eulerian uniform estimate} and the following semigroup estimate for the inhomogeneous linearized equation.

\begin{lemma}[Semigroup estimate]\label{lem:semigroup estimate}
    Take any $T\geq 0$, $s \geq 1$, and $\rho \in \bR$. Suppose $h \in B^{\rho}(M)$ and $H,G$ are two time-dependent functions on $[0,T] \times M$, which satisfy the equation
        \begin{align}\label{e:semigroup eq}
            \partial_t H_t + \cA[H_t] = G_t, \quad H_0 = h.
        \end{align}
    There is a constant $C_{s,M}$ only depending on $s,M$, such that
        \begin{align}\label{e:semigroup estimate}
            \|H\|_{\BB_T ^{\rho}}
\leq C_{s,M} \cdot \left(
\|h\|_{B^{\rho}} + \|G\|_{B_T ^\rho} \right).
        \end{align}
\end{lemma}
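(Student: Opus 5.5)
The plan is to work blockwise, exploiting that $\proj_k$ commutes with $\cA$, so the equation decouples into independent linear ODEs in $L^2$ for each dyadic block. Applying $\proj_k$ to \eqref{e:semigroup eq} gives
\begin{align*}
    \partial_t H_k + \cA[H_k] = G_k, \qquad H_k(0)=\proj_k h .
\end{align*}
Since $\cA=(-\Delta)^{1-s}\circ\Pi_0$ is a nonnegative self-adjoint operator diagonal in $\{\phi_n\}$, with symbol $\lambda_n^{1-s}$ on nonconstant modes, the semigroup $e^{-t\cA}$ is an $L^2$ contraction commuting with $\proj_k$. Duhamel's formula then gives
\begin{align*}
    H_k(t)=e^{-t\cA}\proj_k h+\int_0^t e^{-(t-r)\cA}G_k(r)\,\de r .
\end{align*}

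\textbf{Bound on $\sup_{t}\|H_t\|_{B^\rho}$.} Contractivity yields
\begin{align*}
    \|H_k(t)\|_2\le \|\proj_k h\|_2+\int_0^T\|G_k(r)\|_2\,\de r .
\end{align*}
Multiplying by $2^{k\rho}$ and taking the supremum over $k$ and $t$ gives $\sup_t\|H_t\|_{B^\rho}\le \|h\|_{B^\rho}+\|G\|_{B^\rho_T}$.

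\textbf{Bound on $\|\cA[H]\|_{B_T^\rho}$.} Here I use maximal smoothing of the semigroup on each eigenmode. For a single mode with rate $\mu_n=\lambda_n^{1-s}\ge 0$, one has
\begin{align*}
    \int_0^\infty \mu_n e^{-\mu_n t}\,\de t\le 1 .
\end{align*}
The $L^2$ norm does not split over modes inside $\int_0^T\|\cdot\|_2$, so I would handle this at the block level. On block $k\ge0$ the symbol of $\cA$ is comparable to $\omega_k=2^{-rk}$ with constants depending only on $s$, by the frequency localization \eqref{e:uniform dyadic frequency scale}. This gives
\begin{align*}
    \|\cA e^{-t\cA}\proj_k h\|_2\le C_s\,\omega_k e^{-C_s^{-1}\omega_k t}\|\proj_k h\|_2 ,
\end{align*}
and integrating in $t$ gives $C_s^2\|\proj_k h\|_2$. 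For the Duhamel term, Fubini gives
\begin{align*}
    \int_0^T\!\!\int_0^t \omega_k e^{-C_s^{-1}\omega_k(t-r)}\|G_k(r)\|_2\,\de r\,\de t\le C_s\int_0^T\|G_k\|_2\,\de r .
\end{align*}
This is the same computation as \eqref{e:X_T Y_T integral bound 2}.

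\textbf{Low block $k=-1$.} This block contains the zero mode, where $\cA$ vanishes because of $\Pi_0$, together with the finitely many modes $\lambda_n<4$. Each nonzero mode has $\mu_n\le C_{s,M}$, and a per-mode argument suffices: $\|\cA H_{-1}(t)\|_2\le \max_n\mu_n\,\|H_{-1}(t)\|_2$ is not time-integrable in general. So I would instead bound each of the finitely many modes by $\mu_n\int_0^T|\hat H_n|\,\de t\le C(|\hat h_n|+\int_0^T|\hat G_n|)$, and sum over the finitely many modes, where the count depends only on $M$.

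On a general $M$, the same two-sided comparability $\mu_n\asymp\omega_k$ on block $k\ge0$ follows because $\sqrt{\lambda_n}\in(2^k,2^{k+2})$ there. Summing the bounds above with weights $2^{k\rho}$ and taking suprema gives \eqref{e:semigroup estimate}.

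The main obstacle is the low-frequency block together with the fact that $\int_0^T\|\cdot\|_2$ does not commute with mode sums. I expect it to be resolved by finiteness of the low spectrum in the $k=-1$ block and by the uniform comparability $\mu_n\asymp\omega_k$ in each block $k\ge0$.
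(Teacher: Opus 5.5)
Your proposal is correct and follows essentially the same route as the paper: Duhamel's formula applied blockwise, $L^2$-contractivity of $e^{-t\cA}$ for the bound on $\sup_t\|H_t\|_{B^\rho}$, and the block-level decay estimate $\|\proj_k\cA e^{-t\cA}h\|_2\le 2^{2k(1-s)}e^{-16^{1-s}2^{2k(1-s)}t}\|\proj_k h\|_2$, integrated in time (with Fubini for the Duhamel term). The only difference is the block $k=-1$: the paper stays at the block level, using $\lambda_1>0$ to get $4^{1-s}<\lambda_n^{1-s}\le\lambda_1^{1-s}$ on the nonzero modes there, so your per-mode summation over the finitely many low modes is valid but unnecessary.
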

\begin{proof}
    By Duhamel's principle,
    \begin{align*}
        H(t)=e^{-t\cA}h+\int_0^t e^{-(t-\tau)\mathcal A}G(\tau)\,\de \tau,
    \end{align*}
    where $e^{-t\cA}h$ denotes the solution to the linearized equation \eqref{e:linearized flow} starting from $h$.
    
    We first estimate the $B^{\rho}$-norm of $H$. For any $k \geq -1$, according to the explicit form \eqref{e:linearized basis expansion} applied to $e^{-t\cA}h$ and $e^{-(t-\tau)\mathcal A}G(\tau)$, we have that 
    \begin{align*}
        \|\proj_k e^{-t\cA} h \|_2 \leq \|\proj_k h\|_2, \quad \left\|
\proj_k
\int_0^t e^{-(t-\tau)\cA}G(\tau) \,\de \tau
\right\|_2
\leq
\int_0^t
\|\proj_k G(\tau)\|_2\,\de \tau.
    \end{align*}
Therefore, 
    \begin{align*}
         \begin{split}
             &\sup_{t \in [0,T]} \|H(t,\cdot)\|_{B^{\rho}} = 
\sup_{t \in [0,T]} \sup_{k \geq -1} 2^{k\rho} \|\proj_k H(t)\|_2
\\ &\leq \|h\|_{B^{\rho}}
+
\sup_{k \geq -1} 2^{k\rho} \int_0^T \|\proj_k G(\tau)\|_2\,\de \tau =  \|h\|_{B^{\rho}}
+ \|G\|_{B_T ^\rho}.
         \end{split}
    \end{align*}

We next estimate $\left\|\cA [H]\right\|_{B_T ^\rho}$. According to \eqref{e:linearized basis expansion}, we can write
    \begin{align*}
        \cA e^{-t\cA}h = \sum_{ n = 1} ^{\infty} \lambda_n ^{1-s} e^{- \lambda_n ^{1-s} t} b_n \phi_n, \quad \text{ with } b_n \coloneqq \int_M h \phi_n \,\de x.
    \end{align*}
Because $\supp( \eta) \subseteq [1,4]$ and $s \geq 1$, for $k\geq 0$ and those eigenvalues such that $2^{2k} \leq \lambda_n \leq 16 \cdot 2^{2k}$, we have that $\lambda_n ^{1-s} e^{- \lambda_n ^{1-s} t} \leq 2^{2k(1-s)} e ^{- c_s 2^{2k(1-s)} t}$, where $c_s = 16 ^{1-s}$. Thus,
    \begin{align}\label{e:block projection}
        \|\proj_k \cA e^{-t\cA}h \|_2 \leq 2^{2k(1-s)} e ^{- c_s 2^{2k(1-s)} t} \| \proj_k h\|_2.
    \end{align}
Similarly, for $k=-1$, we have an inequality similar to \eqref{e:block projection} up to a constant $C = C_M$ because the first eigenvalue $\lambda_1 > 0$. Thus,
    \begin{align*}
        \begin{split}
            & \| \cA[e^{-t\cA}h]\|_{B_T ^\rho}
=
\sup_{k\geq -1} 2^{k\rho} \int_0^T
\|\proj_k\cA e^{-t\cA}h \|_2 \, \de t 
\leq C_{s,M} 
\sup_{k\ge -1}
2^{k\rho} \| \proj_k h\|_2
\int_0^T
2^{2k(1-s)} e ^{- c_s 2^{2k(1-s)} t} \, \de t 
\\  & \leq  C_{s,M} 
\sup_{k\ge -1}
2^{k\rho} \| \proj_k h\|_2
\int_0 ^{\infty} e^{- c_s t} \, \de t =
C_{s,M} \|h\|_{B^\rho}.
        \end{split}
    \end{align*}
Similarly, for every $k \geq -1$, using the linearity of the operators $\cA$ and $\proj_k$,
        \begin{align*}
        \begin{split}
            & \left\| \cA\left[\int_0^t e^{-(t-\tau)\mathcal A}G(\tau)\,\de \tau \right] \right\|_{B_T ^\rho} \leq \sup_{k \geq -1} 2^{k \rho} \int_0^T
\int_0 ^{t}
\|\proj_k \cA e^{-(t-\tau)\mathcal A}G(\tau) \|_2  \, \de \tau \de t 
\\  &= \sup_{k \geq -1} 2^{k \rho} \int_0^T
\int_\tau ^{T}
\|\proj_k \cA e^{-(t-\tau)\mathcal A}G(\tau) \|_2  \, \de t \de \tau 
\\  &\overset{\eqref{e:block projection}}{\leq} C_M \sup_{k \geq -1} 2^{k \rho} \int_0^T
\int_\tau ^{T}
2^{2k(1-s)} e ^{- c_s 2^{2k(1-s)} (t-\tau)} \| \proj_k G(\tau)\|_2  \, \de t \de \tau
\\  &\leq C_M \sup_{k \geq -1} 2^{k \rho} \int_0^T \| \proj_k G(\tau)\|_2
\int_0 ^{\infty}
2^{2k(1-s)} e ^{- c_s 2^{2k(1-s)} t}   \, \de t \de \tau
\\  &=C_{s,M} 
\sup_{k \geq -1} 2^{k \rho} \int_0^T \| \proj_k G(\tau)\|_2
 \de \tau  = C_{s,M} \|G\|_{B_T ^\rho}.
        \end{split}
    \end{align*}
Thus, we obtain that
    \begin{align*}
        \left\|\cA [H]\right\|_{B_T ^\rho} \leq C_{s,M} \left( \|h\|_{B^{\rho}}
+ \|G\|_{B_T ^\rho} \right).
    \end{align*}
This finishes the proof of \Cref{lem:semigroup estimate}.

\end{proof}

\begin{proof}[Proof of \Cref{cor:uniform perturbation Besov norms}]
For the term $\operatorname{div}(f_t v_t )$ in \eqref{e:wass grad flow perturbation 2}, we first have that for any $T \geq 0$,
    \begin{align}\label{e:source comparison bound}
        \|\operatorname{div}(f_t v_t )\|_{B_T^{\sigma-1}} \leq C_{\sigma,d} \cdot  \|f_t v_t \|_{B_T^\sigma} \overset{\eqref{e:Besov product lemma in time}}{\leq} C_{\sigma,d} \cdot \cX_T \cdot \|v_t\|_{B_T^\sigma}\overset{\eqref{e:velocity Chemin bound} \text{ and } \eqref{e:eulerian global bound}}{\leq} C_{\sigma,s,d} \cdot \varepsilon \cdot \cX_0 ^2 .
    \end{align}
Thus, by \eqref{e:wass grad flow perturbation} and \eqref{e:linearized flow in nonlinear section},
\begin{align}\label{e:ft-bar ft differential equation}
    \partial_t \left( f_t - \overline f_t \right)+\cA \left[ f_t - \overline f_t \right]= \operatorname{div}(f_t v_t ),
    \quad f_0 - \overline f_0 = 0.
\end{align}
Then, \Cref{lem:semigroup estimate} with $\rho=\sigma-1$ directly implies that for any $T \geq 0$,
\begin{align*}
    \|f_t-\overline f_t\|_{\BB_T^{\sigma-1}}
    \leq
    C_{s,d} \|\operatorname{div}(f_t v_t )\|_{B_T^{\sigma-1}}
    \overset{\eqref{e:source comparison bound}}{\leq}
    C_{\sigma,s,d} \cdot \varepsilon \cdot \cX_0 ^2.
\end{align*}
The constant is independent of $T$, which proves
\eqref{e:uniform perturbation Besov norms}.
\end{proof}

We finally turn the persistence estimate \Cref{prop:eulerian uniform estimate} into a slow-decay construction. The following interpolation inequality, which is a special case of \cite[Theorem 6.4.5 (1)]{bergh2012interpolation}, supplies the matching upper bound in \Cref{thm:wass -1-delta rate intro}. For completeness, we reproduce its proof here.

\begin{lemma}[Interpolation between $\dot H^{1-2s}$ and $B^\sigma$]
\label{lem:negative Sobolev Besov interpolation}
Let $M= \TT^d$. Let $s>1$, $\sigma>0$, and let $h\in L_0^2(M)\cap B^\sigma(M)$. Denote
\begin{align}\label{e:def interpolation theta}
    \theta\coloneqq
    \frac{\sigma+s}{\sigma+2s-1}\in(0,1).
\end{align}
Then, there is a constant $C$ only depending on $s,\sigma$, such that
\begin{align}\label{e:negative Sobolev Besov interpolation}
    \|h\|_{\dot H^{-s}}^2
    \leq
    C
    \|h\|_{B^\sigma}^{2(1-\theta)}
    \|h\|_{\dot H^{1-2s}}^{2\theta}.
\end{align}
\end{lemma}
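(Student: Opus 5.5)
Since $h$ has zero mean, the block $\proj_{-1}h$ only sees frequencies $1\le|\xi|<2$, and every nonzero frequency of $\proj_k h$ satisfies $2^k<|\xi|<2^{k+2}$ by \eqref{e:Fourier support of littlewood partition}. I will therefore work blockwise with the dyadic quantities $x_k\coloneqq\|\proj_k h\|_2$ and use the equivalent definition \eqref{e:sobolev norm} of the homogeneous Sobolev norms. The three norms in \eqref{e:negative Sobolev Besov interpolation} become
\begin{align*}
    \|h\|_{\dot H^{-s}}^2=\sum_{k\ge-1}2^{-2sk}x_k^2,\qquad
    \|h\|_{\dot H^{1-2s}}^2=\sum_{k\ge-1}2^{2(1-2s)k}x_k^2,\qquad
    \|h\|_{B^\sigma}=\sup_{k\ge-1}2^{\sigma k}x_k .
\end{align*}
Write $A\coloneqq\|h\|_{B^\sigma}$ and $B\coloneqq\|h\|_{\dot H^{1-2s}}$. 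If $B=0$ then $h=0$, so I may assume $A,B>0$. The plan is the standard real-interpolation splitting at a frequency threshold $N\ge -1$ (an integer), to be optimized at the end.

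For low frequencies $k\le N$, I write $2^{-2sk}=2^{2(2s-1)k}\cdot 2^{2(1-2s)k}\cdot 2^{-2sk}$. Since $2^{-2sk}\cdot 2^{2(2s-1)k}=2^{2(s-1)k}$, this gives
\[
\sum_{k\le N}2^{-2sk}x_k^2\le 2^{2(s-1)N}B^2,
\]
using $s>1$. For high frequencies $k>N$, I use $x_k\le A2^{-\sigma k}$, which gives
\[
\sum_{k>N}2^{-2sk}x_k^2\le A^2\sum_{k>N}2^{-2(s+\sigma)k}\le C_{s,\sigma}A^2 2^{-2(s+\sigma)N}.
\]
The geometric series converges since $s+\sigma>0$. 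Hence
\[
\|h\|_{\dot H^{-s}}^2\lesssim 2^{2(s-1)N}B^2+2^{-2(s+\sigma)N}A^2 .
\]

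Next I balance the two terms by choosing $2^{N}\approx (A/B)^{1/(2s+\sigma-1)}$. At this choice both terms equal $A^{2(s-1)/(2s+\sigma-1)}B^{2(s+\sigma)/(2s+\sigma-1)}=A^{2(1-\theta)}B^{2\theta}$, with $\theta$ as in \eqref{e:def interpolation theta}.

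The only point needing care is that $N$ must be an integer $\ge -1$. Rounding $N$ to the nearest integer costs only a constant factor $2^{O(s+\sigma)}$. The constraint $N\ge-1$ must be checked separately, and this is the main (mild) obstacle. If the optimal $N$ falls below $-1$, i.e.\ $A\lesssim B$, I instead take $N=-1$, so that only the high-frequency sum appears. Its bound $A^2\lesssim A^{2(1-\theta)}A^{2\theta}\lesssim A^{2(1-\theta)}B^{2\theta}$ uses $A\lesssim B$. Alternatively, note that $A\gtrsim B$ always holds up to constants: $\|h\|_{\dot H^{1-2s}}\lesssim\|h\|_{B^\sigma}$ because $1-2s<0<\sigma$ and $\sum_k 2^{2(1-2s-\sigma)k}<\infty$. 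Either route gives \eqref{e:negative Sobolev Besov interpolation} with $C$ depending only on $s,\sigma$.
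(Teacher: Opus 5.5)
Your argument is correct and is essentially the paper's proof. Both use the same dyadic split at $N$, with the low blocks controlled by $2^{2(s-1)N}\|h\|_{\dot H^{1-2s}}^2$ and the high blocks by $2^{-2(s+\sigma)N}\|h\|_{B^\sigma}^2$, the same balancing choice of $N$, and the same separate treatment of the case $\|h\|_{B^\sigma}\lesssim\|h\|_{\dot H^{1-2s}}$ via $\|h\|_{\dot H^{-s}}^2\lesssim\|h\|_{B^\sigma}^2$. The only harmless slip is that with $N=-1$ the low sum still contains the $k=-1$ block. Either take $N=-2$ as the paper does, or simply bound every block by $2^{-\sigma k}\|h\|_{B^\sigma}$.
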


\begin{proof}
Assume that $h \neq 0$. Recall the homogeneous Sobolev norms defined in \eqref{e:sobolev norm}. For every $N \in \bZ$, splitting the dyadic sum at the dyadic index $N$ gives
\begin{align}\label{e:interpolation Besov low-high parts}
    \begin{split}
        \|h\|_{\dot H^{-s}}^2
    &=
    \sum_{k\leq N}2^{-2sk}\|h_k\|_2^2
    +
    \sum_{k>N}2^{-2sk}\|h_k\|_2^2 
    \\ &\leq 2^{2(s-1)N} \sum_{k\leq N}2^{2(1-2s)k}\|h_k\|_2^2  + \|h\|_{B^\sigma} ^2 \sum_{k>N}2^{-2(s+\sigma)k}                             \\
    &\leq 2^{2(s-1)N} \|h\|_{\dot H^{1-2s}} ^2
    +
    2^{-2(s+\sigma)N} \|h\|_{B^\sigma} ^2 .
    \end{split}
\end{align}
If $\|h\|_{B^\sigma} \leq \|h\|_{\dot H^{1-2s}}$, then we pick $N=-2$. The $k \leq N$ part in \eqref{e:interpolation Besov low-high parts} is empty and \eqref{e:interpolation Besov low-high parts} gives $\|h\|_{\dot H^{-s}}^2  \leq C_{\sigma,s} \|h\|_{B^\sigma} ^2$, which directly implies \eqref{e:negative Sobolev Besov interpolation}. If instead $\|h\|_{B^\sigma} > \|h\|_{\dot H^{1-2s}}$, we choose a positive real number $N_1$ such that
    \begin{align*}
        2^{2(2s+\sigma-1)N_1} =  \|h\|_{B^\sigma} ^2  \cdot \|h\|_{\dot H^{1-2s}} ^{-2} .
    \end{align*}
We then pick $N = \lfloor N_1 \rfloor +1 \in \bZ_+$ and thus
    \begin{align}\label{e:choice N interpolation Besov}
        \|h\|_{B^\sigma} ^2  \cdot \|h\|_{\dot H^{1-2s}} ^{-2} \leq 2^{2(2s+\sigma-1)N} \leq C_{\sigma,s} \|h\|_{B^\sigma} ^2  \cdot \|h\|_{\dot H^{1-2s}} ^{-2}
    \end{align}
\eqref{e:negative Sobolev Besov interpolation} then follows from plugging \eqref{e:choice N interpolation Besov} into \eqref{e:interpolation Besov low-high parts}.

\end{proof}

\begin{proof}[Proof of \Cref{thm:wass -1-delta rate intro}]
The construction of the initial datum $f_0$ uses only Weyl's law and therefore
works on any closed connected manifold. By Weyl's law, there is a sufficiently large $k_0 = k_0(M) \in \bZ_+$, such that when $k \geq k_0$, there exists an $n_k$ such that the Laplacian eigenvalue $\lambda_{n_k}$ satisfies 
\begin{align*}
    2\cdot2^k
    \leq \sqrt{\lambda_{n_k}}
    \leq 3\cdot2^k.
\end{align*}
We denote the corresponding real-valued $L^2$-normalized eigenfunctions by $\phi_{n_k}$. By the construction of $\eta$ in the Littlewood--Paley decomposition, $\eta=1$ on $[2,3]$ and thus $\eta\left( 2^{-k} \sqrt{\lambda_{n_k}}\right) = 1$. The partition of unity
\eqref{e:partition of unity} gives
\begin{align}\label{e:chosen mode survives projection}
    \proj_k\phi_{n_k}=\phi_{n_k},
    \qquad
    \proj_j\phi_{n_k}=0 \quad\text{ for }j\ne k.
\end{align}
Choose
\begin{align}\label{e:choice f_0}
    f_0= \sum_{k\geq k_0}2^{-\sigma k}\phi_{n_k}.
\end{align}
Then for any $k \geq k_0$, $\proj_k f_0 = 2^{-\sigma k }\phi_{n_k}$, and clearly, $\|f_0\|_{B^\sigma}=1$. We now take $M=\TT^d$, as required by
\Cref{prop:eulerian uniform estimate}.

Choose $0<\varepsilon\leq \min\{c_0,c_1\}$ for the constants $c_0,c_1$ depending only on $s,\sigma,d$ in \Cref{prop:eulerian uniform estimate}, which ensures both assumptions 
\eqref{e:eulerian smallness condition} and \eqref{e:dyadic persistence initial} for $\gamma = 1$ in \Cref{prop:eulerian uniform estimate}. \Cref{prop:eulerian uniform estimate} also gives a global
solution $f_t$ to \eqref{e:wass grad flow perturbation} on $[0,\infty) \times M$ with initial datum \eqref{e:choice f_0}, which satisfies
\begin{align}\label{e:slow solution uniform bound}
    \sup_{t\geq0}\|f_t\|_{B^\sigma}
    +\sup_{T\geq0}\|\cA [f_t]\|_{B_T^\sigma}
    \leq C_{s,\sigma,d}.
\end{align}
After decreasing $\varepsilon$ once more, the embedding
$B^\sigma (M)\hookrightarrow L^\infty (M)$ in \eqref{e:Besov norm control infinity norm} and \eqref{e:slow solution uniform bound} ensure that for any $t \geq 0$, $u_t = 1+\varepsilon f_t \geq 1/2$, and thus $u_t$ is a positive probability density. After fixing an $\varepsilon$ depending only on $s,\sigma,d$, the energies satisfy $\mathsf{E}[u_t] - \mathsf{E}_{\min} = \varepsilon^2 \mathsf{E}[f_t]$ according to \eqref{e:inverse Laplacian energy formula} and \eqref{e:energy ut ft}. We then establish the two bounds in \eqref{e:wass -1-delta rate} for $\mathsf{E}[f_t]$.

We first prove the lower bound in \eqref{e:wass -1-delta rate}. Recall the $r \coloneqq 2(s-1)$ defined in \Cref{prop:eulerian uniform estimate}. Because $f_0$ satisfies \eqref{e:dyadic persistence initial} for $\gamma = 1$, \eqref{e:dyadic persistence simple} implies that
\begin{align}\label{e:persistent f_t blocks}
    \|\proj_k f_t\|_2
    \geq \frac{1}{2} \cdot 2^{-\sigma k}
    \qquad
    \text{whenever }k\geq k_0
    \text{ and }0\leq t\leq\tau_0 2^{rk}.
\end{align}
For every $t\geq0$, choose $k(t)\geq k_0$ to be the smallest positive integer for
which $(t+1)\leq\tau_0 2^{rk(t)}$.  Then
\begin{align}\label{e:k(t) choice persistent scale}
    2^{rk(t)} \leq \max\left\{ 2^{rk_0} , \,  2^r \tau_0 ^{-1} \cdot (t+1) \right\} \leq  C_{s,\sigma,d} (1+t).
\end{align}
Thus, for any $t \geq 0$,
\begin{align*}
    \|f_t\|_{\dot H^{-s}}^2
    \overset{\eqref{e:sobolev norm}}{\geq} 2^{-2sk(t)}\|f_{k(t)}(t)\|_2^2 \overset{\eqref{e:persistent f_t blocks}}{\geq} \frac{1}{4} \cdot 2^{-2(s+\sigma)k(t)} \overset{\eqref{e:k(t) choice persistent scale}}{\geq}  C ^{-1}  \cdot (1+t)^{-\frac{s+\sigma}{s-1}},
\end{align*}
where $C>1$ is a constant depending only on $s,\sigma,d$.

We then prove the upper bound in \eqref{e:wass -1-delta rate}. We first have the equivalence of $\mathsf{E}[f_t]$ and the homogeneous Sobolev norm $\|f_t\|_{\dot H^{-s}}^2$ in \eqref{e:equivalence sobolev norm energy}. Then, differentiation
along the Wasserstein gradient flow \eqref{e:wass grad flow} gives \eqref{e:wass gradient}, which implies that 
    \begin{align*}
        \frac{\de}{\de t}\mathsf{E}[f_t] = - \int_M |\cQ[f_t]|^2 \cdot u_t \,\de x \leq -\frac{1}{2} \int_M |\cQ[f_t]|^2  \,\de x \overset{\eqref{e:sobolev norm}}{\leq} -C_s ^{-1} \cdot \|f_t\|_{\dot H^{1-2s}}^2,
    \end{align*}
where the first inequality is because $u_t \geq 1/2$, and the second inequality is the equivalence of \eqref{e:sobolev norm} and the standard form of the Sobolev norm $\dot H^{1-2s}$ up to a constant $C_s > 1$, which uses fractional Laplacian and is exactly $\int_M |\cQ[f_t]|^2  \,\de x$. Using \Cref{lem:negative Sobolev Besov interpolation} and \eqref{e:slow solution uniform bound}, we obtain
    \begin{align}\label{e:diff ineql E ft}
        \frac{\de}{\de t}\mathsf{E}[f_t] \leq - C_{s,\sigma,d} \cdot \|f_t\|_{\dot H^{-s}} ^{2/\theta} \overset{\eqref{e:equivalence sobolev norm energy}}{\leq} - C_{s,\sigma,d} \cdot \mathsf{E}[f_t] ^{1/\theta} ,
    \end{align}
where $\theta = \frac{\sigma+s}{\sigma+2s-1} \in (0,1)$ is defined in \eqref{e:def interpolation theta}. Since $\mathsf{E}[f_0]$ is a constant depending only on $s,\sigma,d$, we obtain from \eqref{e:diff ineql E ft} that
    \begin{align*}
        (\mathsf{E}[f_t]) ^{1-1/\theta} \geq  C_{s,\sigma,d} \cdot \left(t+ (\mathsf{E}[f_0]) ^{1-1/\theta} \right) \geq C_{s,\sigma,d} \cdot (t+1),
    \end{align*}
which is exactly
    \begin{align*}
        \mathsf{E}[f_t] \leq C_{s,\sigma,d} \cdot (t+1)^{-\frac{\theta}{1-\theta}} \overset{\eqref{e:def interpolation theta}}{=} C_{s,\sigma,d} \cdot (1+t)^{-\frac{s+\sigma}{s-1}} .
    \end{align*}
This finishes the proof of \Cref{thm:wass -1-delta rate intro}.
    
\end{proof}


\appendix

\section{Auxiliary lemmas in harmonic analysis}\label{s:harmonic analysis lemmas}

The following Bernstein-type estimates can be found in \cite[Lemma 3.1]{burq2005multilinear}, \cite[Lemma 9.2]{feichtinger2016geometric}, and \cite{sogge1988concerning,smith2007p,zworski2012semiclassical}. For completeness, we state them in a form adapted to our notation. To obtain the forms in \Cref{lem:Bernstein inequalities}, we only need to notice that both $\bfS_\ell h$ and $\proj_\ell h$ are linear combinations of eigenfunctions with eigenvalues less than $2^{2(\ell + 2)}$.
\begin{lemma}[Bernstein Inequalities]\label{lem:Bernstein inequalities}
    Let $h \in L^2(M)$ and adopt the dyadic partition defined in \eqref{e:partition of L^2 function}.
        \begin{itemize}
            \item \cite[Lemma 3.1]{burq2005multilinear}: For any $\ell \geq -1$, we have that 
        \begin{align}\label{e:Bernstein inequality 1}
            \| \proj_\ell h\|_{\infty} \leq C \cdot 2^{(\ell+2) d/2} \|\proj_\ell h\|_2,
        \end{align}
    where the constant $C>0$ depends only on $d,M$.
        \item \cite[Lemma 9.2]{feichtinger2016geometric}: For any $\ell \geq -1$, any $\gamma \in \bN$, and any $q \in [1,\infty]$, we have that
        \begin{align}\label{e:Bernstein inequality 2}
            \| \nabla^{\gamma} \bfS_\ell h \|_{q} \leq C' \cdot 2^{(\ell+2) \cdot \gamma} \|  \bfS_\ell h \|_{q}, \quad \| \nabla^{\gamma} \proj_\ell h \|_{q} \leq C' \cdot 2^{(\ell+2) \cdot \gamma} \|  \proj_\ell h \|_{q},
        \end{align}
    where the constant $C'>0$ depends only on $\gamma,q,d,M$.
        \end{itemize}
\end{lemma}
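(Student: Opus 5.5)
The plan is to reduce both items to statements about functions of bounded spectral support. The only structural input is the observation already made before the lemma: $\bfS_\ell h$ and $\proj_\ell h$ lie in $E_\Lambda\coloneqq\operatorname{span}\{\phi_n:\lambda_n\le\Lambda\}$ with $\Lambda=2^{2(\ell+2)}$. This holds because $\chi(\sqrt{\lambda})=0$ for $\sqrt\lambda\ge2$ and $\eta(2^{-j}\sqrt\lambda)=0$ for $\sqrt\lambda\ge 2^{j+2}$. So it suffices to prove, for every $g\in E_\Lambda$ with $\Lambda\ge1$,
\begin{align*}
\|g\|_\infty\le C\Lambda^{d/4}\|g\|_2,
\qquad
\|\nabla^\gamma g\|_q\le C'\Lambda^{\gamma/2}\|g\|_q .
\end{align*}
Substituting $\Lambda^{1/2}=2^{\ell+2}$ then gives exactly \eqref{e:Bernstein inequality 1} and \eqref{e:Bernstein inequality 2}.

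For \eqref{e:Bernstein inequality 1}, I will expand $g=\sum_{\lambda_n\le\Lambda}c_n\phi_n$ and apply Cauchy--Schwarz pointwise:
\begin{align*}
|g(x)|\le\Bigl(\sum_{\lambda_n\le\Lambda}|c_n|^2\Bigr)^{1/2}\Bigl(\sum_{\lambda_n\le\Lambda}|\phi_n(x)|^2\Bigr)^{1/2}.
\end{align*}
The first factor equals $\|g\|_2$. For the second factor, I will invoke the uniform local Weyl law \eqref{e:local-weyl-law}, which gives the bound $C\Lambda^{d/4}$ uniformly in $x$ for $\Lambda$ large. Small $\Lambda$ involves only finitely many smooth eigenfunctions, so those cases are absorbed into $C$. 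This step is routine.

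For \eqref{e:Bernstein inequality 2}, the $L^q$ norm on the right is why a plain spectral argument is not enough. The case $q=2$, $\gamma=1$ is immediate from $\|\nabla g\|_2^2=\langle-\Delta g,g\rangle\le\Lambda\|g\|_2^2$, but general $q$ requires a reproducing operator with an $L^1$-controlled kernel.
\begin{enumerate}
\item Fix $\psi\in C_c^\infty([0,2))$ with $\psi=1$ on $[0,1]$, and set $\delta=\Lambda^{-1/2}$. Then $g=\psi(\delta\sqrt{-\Delta})g$ for every $g\in E_\Lambda$.
\item Write $\nabla^\gamma g=T_\delta g$ with $T_\delta\coloneqq\nabla^\gamma\psi(\delta\sqrt{-\Delta})$, so it suffices to bound $T_\delta$ on $L^q$.
\item Use the semiclassical functional-calculus kernel bound for $\psi(\delta\sqrt{-\Delta})$ on a closed manifold, as in Sogge and Zworski:
\begin{align*}
|\nabla_x^\gamma K_\delta(x,y)|\le C_N\,\delta^{-d-\gamma}\bigl(1+\operatorname{dist}(x,y)/\delta\bigr)^{-N}.
\end{align*}
\item Conclude by Schur's test. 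Both $\sup_x\int|\nabla_x^\gamma K_\delta|\,\de y$ and $\sup_y\int|\nabla_x^\gamma K_\delta|\,\de x$ are at most $C\delta^{-\gamma}$, so $\|T_\delta\|_{L^q\to L^q}\le C'\delta^{-\gamma}=C'\Lambda^{\gamma/2}$ for all $q\in[1,\infty]$.
\end{enumerate}

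I expect the kernel estimate in step 3 to be the main obstacle on a general closed manifold. I would take it from the cited references rather than reprove it: it follows from a parametrix for $e^{it\sqrt{-\Delta}}$ combined with Fourier inversion in $t$. On $M=\TT^d$, the only case used in \Cref{s:nonlinear decay close t-1}, the obstacle disappears. There $K_\delta$ is the $2\pi\bZ^d$-periodization of $\delta^{-d}\check\Psi(\cdot/\delta)$ with $\Psi(\xi)=\psi(|\xi|)$ a Schwartz function, so the decay bound follows directly from Poisson summation.
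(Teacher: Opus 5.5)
Your proposal is correct and follows the paper's approach. The paper likewise reduces both estimates to the observation that $\bfS_\ell h$ and $\proj_\ell h$ are combinations of eigenfunctions with eigenvalues below $2^{2(\ell+2)}$, and then cites the standard Bernstein estimates. Your pointwise Cauchy--Schwarz argument with the uniform local Weyl law, and your reproducing operator $\psi(\delta\sqrt{-\Delta})$ with its kernel bound and Schur's test, are the standard proofs behind those citations, so you have made explicit what the paper leaves to the references.
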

A direct corollary of \eqref{e:Bernstein inequality 1} is the following:
\begin{corollary}\label{cor:Bernstein inequalities 2-1}
    Let $h \in L^2(M)$ and adopt the dyadic partition defined in \eqref{e:partition of L^2 function}. For any $\ell \geq -1$, we have that
        \begin{align}\label{e:Bernstein inequality 3}
            \|\proj_{\ell} h\|_2 \leq C \cdot 2^{\ell d/2} \|h\|_1,
        \end{align}
    where the constant $C>0$ depends only on $d,M$.
\end{corollary}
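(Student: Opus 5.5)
Since $\proj_\ell h$ is itself a finite linear combination of eigenfunctions, the natural route is duality against the $L^\infty$ Bernstein bound \eqref{e:Bernstein inequality 1}. Write $\proj_\ell$ as the self-adjoint spectral multiplier with symbol $m_\ell(\lambda)=\vartheta_\ell(\sqrt{\lambda})\in[0,1]$. Then
\begin{align*}
    \|\proj_\ell h\|_2^2
    =
    \langle \proj_\ell h,\proj_\ell h\rangle
    =
    \langle h,\proj_\ell^2 h\rangle
    \leq
    \|h\|_1\,\|\proj_\ell^2 h\|_\infty .
\end{align*}
The plan is to bound $\|\proj_\ell^2 h\|_\infty$ by $C2^{\ell d/2}\|\proj_\ell h\|_2$ and then divide by $\|\proj_\ell h\|_2$. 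The case $\proj_\ell h=0$ is trivial.

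For that bound, note that $\proj_\ell^2 h=\sum_n m_\ell(\lambda_n)^2 b_n\phi_n$ has the same spectral support as $\proj_\ell h$, namely eigenvalues below $2^{2(\ell+2)}$. The proof of \eqref{e:Bernstein inequality 1} cited from \cite{burq2005multilinear} is really a statement about functions spectrally localized in this window; it is the local Weyl bound $\sum_{\lambda_n\le\Lambda}|\phi_n(x)|^2\le C\Lambda^{d/2}$ combined with Cauchy--Schwarz. It therefore applies to $\proj_\ell^2 h$ as well, and gives
\begin{align*}
    \|\proj_\ell^2 h\|_\infty
    \leq
    C\,2^{(\ell+2)d/2}\,\|\proj_\ell^2 h\|_2 .
\end{align*}
Since $0\le m_\ell\le 1$, Parseval gives $\|\proj_\ell^2 h\|_2\le\|\proj_\ell h\|_2$. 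Absorbing $2^{d}$ into $C$ yields \eqref{e:Bernstein inequality 3}.

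The only point needing care is applying \eqref{e:Bernstein inequality 1} to $\proj_\ell^2 h$ rather than to $\proj_\ell h$. I would handle it in one of two ways. The first is to note, as the appendix already remarks, that the inequality holds for any linear combination of eigenfunctions with $\lambda_n<2^{2(\ell+2)}$. The second is to bypass $\proj_\ell^2$ altogether: write $\proj_\ell h(x)=\int_M \kappa_\ell(x,y)h(y)\,\de y$ with kernel $\kappa_\ell(x,y)=\sum_n m_\ell(\lambda_n)\phi_n(x)\phi_n(y)$, so that $\|\proj_\ell h\|_2\le \|h\|_1\sup_y\|\kappa_\ell(\cdot,y)\|_2$. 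Then $\|\kappa_\ell(\cdot,y)\|_2^2=\sum_n m_\ell(\lambda_n)^2|\phi_n(y)|^2\le\sum_{\lambda_n<2^{2(\ell+2)}}|\phi_n(y)|^2\le C2^{\ell d}$ by the local Weyl law \eqref{e:local-weyl-law}. The second way is self-contained and I would present it as the main argument.

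When $\ell=-1$ the projector only involves finitely many eigenvalues below $4$, and the same bound holds with a constant depending on $M$.
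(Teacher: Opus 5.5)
Your proposal is correct. Your first route is the paper's proof. The paper writes $\|\proj_\ell h\|_2=\sup_{\|g\|_2=1}\langle h,\proj_\ell g\rangle\le\|h\|_1\sup_{\|g\|_2=1}\|\proj_\ell g\|_\infty$ and applies \eqref{e:Bernstein inequality 1} to $g$, together with $\|\proj_\ell g\|_2\le\|g\|_2$; you plug in the extremal choice $g=\proj_\ell h/\|\proj_\ell h\|_2$. The step you flag as needing care is not an issue. Since $\proj_\ell^2h=\proj_\ell(\proj_\ell h)$ with $\proj_\ell h\in L^2(M)$, \eqref{e:Bernstein inequality 1} applies verbatim with $h$ replaced by $\proj_\ell h$, so no extension of the lemma is needed.

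Your second route, which you intend to present as the main argument, is genuinely different. Instead of dualizing the cited $L^2\to L^\infty$ Bernstein bound, you bound the $L^1\to L^2$ norm of $\proj_\ell$ directly by $\sup_y\|\kappa_\ell(\cdot,y)\|_2$ using Minkowski's integral inequality. You then control this quantity by the uniform local Weyl law \eqref{e:local-weyl-law}. This is valid because the window is $\lambda_n<2^{2(\ell+2)}$ with $2^{2(\ell+2)}\ge 4$. For large $\Lambda$ the local Weyl law gives $\sum_{\lambda_n<\Lambda}|\phi_n(y)|^2\le C_M\Lambda^{d/2}$ uniformly in $y$, and for bounded $\Lambda$ only finitely many bounded eigenfunctions occur.

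What this route buys is independence from the cited Bernstein lemma. What it costs is reliance on the pointwise local Weyl law, a heavier input than the one-line duality. Moreover, $\sup_y\|\kappa_\ell(\cdot,y)\|_2$ is also exactly the $L^2\to L^\infty$ operator norm of $\proj_\ell$. So your computation is essentially the dual form of the local-Weyl argument that underlies \eqref{e:Bernstein inequality 1} itself. Since the paper already has \eqref{e:Bernstein inequality 1} available, its duality argument (your first route) is the more economical presentation.
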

\begin{proof}
    The proof follows from a direct duality argument. Indeed,
        \begin{align*}
            \|\proj_{\ell} h\|_2 = \sup_{\|g\|_2 = 1 } \langle \proj_{\ell} h , \,  g \rangle = \sup_{\|g\|_2 = 1 } \langle  h , \,  \proj_{\ell} g \rangle \leq \sup_{\|g\|_2 = 1 }   \|h\|_1 \cdot \|\proj_{\ell} g\|_{\infty} \overset{\eqref{e:Bernstein inequality 1}}{\leq} C \cdot 2^{\ell d/2} \cdot \|h\|_1 .
        \end{align*}
\end{proof}

We also need the following almost-orthogonality of the smooth Littlewood--Paley projectors $\proj_k$.
\begin{lemma}[Almost-orthogonality of $\proj_k$]\label{lem:almost orthogonal littlewood paley}
    Let $h,g \in L^2\left( M \right)$ and adopt the dyadic partition defined in \eqref{e:partition of L^2 function} and \eqref{e:dyadic block and partial sum notation}. For any $k,j,\ell \geq -1$, and any $\xi >0 $, there is a constant $C>0$ depending only on $\xi , M$, such that
        \begin{align}\label{e:almost orthogonal 1}
            \|\proj_k \left( \bfS_{\ell -4} h \cdot \proj_\ell g \right)\|_2 \leq C \cdot 2^{-\xi(k-\ell)_+} \| \bfS_{\ell -4} h \|_{\infty} \|\proj_\ell g \|_2,
        \end{align}
    and
        \begin{align}\label{e:almost orthogonal 2}
            \|\proj_k \left( \proj_{j } h \cdot \proj_\ell g \right)\|_2 \leq C \cdot 2^{-\xi(k-\max\{j,\ell\})_+} \| \proj_{j} h \|_{\infty} \|\proj_\ell g \|_2,
        \end{align}
    where $(k-\ell)_+=\max\{k-\ell,0\}$.
\end{lemma}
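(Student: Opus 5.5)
The plan is to reduce both estimates to a statement about spectral multipliers. Writing $h=\sum_n b_n\phi_n$ and $g=\sum_m c_m\phi_m$, the blocks $\bfS_{\ell-4}h$, $\proj_j h$ and $\proj_\ell g$ are finite linear combinations of eigenfunctions with $\sqrt{\lambda}\lesssim 2^{\max\{j,\ell\}+2}$. The two cases are then handled separately.

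\emph{Case $k\le \max\{j,\ell\}+C_0$.} Here $(k-\ell)_+$, respectively $(k-\max\{j,\ell\})_+$, is bounded by a fixed constant. The $L^2$-boundedness of $\proj_k$ (its multiplier lies in $[0,1]$) then gives
\[
\|\proj_k(FG)\|_2\le\|F\|_\infty\|G\|_2
\]
directly, for $F\in\{\bfS_{\ell-4}h,\ \proj_j h\}$ and $G=\proj_\ell g$. This settles both \eqref{e:almost orthogonal 1} and \eqref{e:almost orthogonal 2} in this range.

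\emph{Case $k\gg\max\{j,\ell\}$.} This is the high-frequency output regime, and I would use an integration-by-parts (commutator) argument with powers of the Laplacian. Since $\eta(2^{-k}\sqrt\lambda)$ vanishes for $\sqrt\lambda\le 2^k$, we have
\[
\proj_k=\proj_k(-\Delta)^{-N}(-\Delta)^N
\quad\text{with}\quad
\|\proj_k(-\Delta)^{-N}\|_{L^2\to L^2}\le 2^{-2Nk}.
\]
Hence
\[
\|\proj_k(FG)\|_2\le 2^{-2Nk}\|\Delta^N(FG)\|_2 .
\]
Expanding $\Delta^N(FG)$ by the Leibniz rule gives terms $\nabla^a F\ast\nabla^b G$ with $a+b\le 2N$, contracted against curvature tensors of bounded order. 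These terms are bounded by $\|\nabla^aF\|_\infty\|\nabla^bG\|_2$. By the Bernstein inequality \eqref{e:Bernstein inequality 2} with $q=\infty$ for $F$ and $q=2$ for $G$, this is at most
\[
C\,2^{(a+b)(\max\{j,\ell\}+2)}\|F\|_\infty\|G\|_2 .
\]
Therefore
\[
\|\proj_k(FG)\|_2\le C_N\,2^{-2N(k-\max\{j,\ell\})}\|F\|_\infty\|G\|_2 .
\]
Choosing $2N\ge\xi$ yields the claim; for \eqref{e:almost orthogonal 1} note that $\max\{\ell-4,\ell\}=\ell$.

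\emph{Expected obstacle.} The one delicate point is \eqref{e:Bernstein inequality 2} applied to $F$ in $L^\infty$: we need $\|\nabla^a F\|_\infty\lesssim 2^{a\ell}\|F\|_\infty$ and not merely a bound in terms of $L^2$ norms. This is exactly the $q=\infty$ case of \eqref{e:Bernstein inequality 2}, which is assumed from \cite{feichtinger2016geometric}, so it can be invoked as given. On $\TT^d$, the only setting used later, the case $k\gg\max\{j,\ell\}$ is even trivial: the Fourier support of $FG$ lies in $|\omega|<2^{\max\{j,\ell\}+3}$, so $\proj_k(FG)=0$ once $k\ge\max\{j,\ell\}+3$. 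I would present the torus argument first and remark that the general closed manifold case follows from the Laplacian-power argument above.
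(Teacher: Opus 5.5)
Your proposal is correct and follows essentially the paper's proof. Both use the trivial $L^2$ bound when $k$ is not much larger than $\max\{j,\ell\}$. For high output frequencies, both use the estimate $\|\proj_k F\|_2\le 2^{-2Nk}\|(-\Delta)^N F\|_2$, then the Leibniz rule and the Bernstein bounds \eqref{e:Bernstein inequality 2} with $q=\infty$ and $q=2$, choosing $2N\ge\xi$. Your extra torus observation, that $\proj_k$ annihilates the product exactly once $k\ge\max\{j,\ell\}+3$, is also correct and not in the paper's proof of this lemma; it is the same Fourier-support reasoning the paper uses in the dyadic transport lemma.
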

\begin{proof} We only prove \eqref{e:almost orthogonal 1} and the proof of \eqref{e:almost orthogonal 2} is similar. When $k = -1$, clearly, 
    \begin{align*}
        \|\proj_{-1} \left( \bfS_{\ell -4} h \cdot \proj_\ell g \right)\|_2 \leq \|\bfS_{\ell -4} h \cdot \proj_\ell g \|_2 \leq \| \bfS_{\ell -4} h \|_{\infty} \|\proj_\ell g \|_2 .
    \end{align*}
When $k > -1$, we let $N = \lfloor \xi/2 \rfloor + 1 \in \bZ_+$. First, for any $h \in C^{\infty}(M)$, we adopt its spectral expansion \eqref{e:L^2 spectral expansion}. By \eqref{e:partition of unity projection}, we have that
    \begin{align*}
        \|\proj_k h \|_2 ^ 2 \leq \sum_{2^k \leq \sqrt{\lambda_n} \leq 4 \cdot 2^k} b_n ^2 \leq 2^{-4k N} \sum_{2^k \leq \sqrt{\lambda_n} \leq 4 \cdot 2^k} \left(\sqrt{\lambda_n } \right) ^{4N}  b_n ^2  \leq 2^{-4k N} \left \| \left(-\Delta \right)^{N} h\right\|_2 ^2.
    \end{align*}
Thus,
    \begin{align*}
        \|\proj_k \left( \bfS_{\ell -4} h \cdot \proj_\ell g \right)\|_2 \leq 2^{-2kN} \left \| \left(-\Delta \right)^{N} \left( \bfS_{\ell -4} h \cdot \proj_\ell g \right) \right\|_2.
    \end{align*}
Then, the Leibniz rule implies that for some $C_{\xi} >0$ depending on $\xi$ only,
    \begin{align*}
        \begin{split}
            &\left \| \left(-\Delta \right)^{N} \left( \bfS_{\ell -4} h \cdot \proj_\ell g \right) \right\|_2 \leq C_{\xi} \sum_{\gamma = 0} ^{2N} \left \| \nabla^{\gamma} \bfS_{\ell -4} h  \right\|_{\infty} \cdot \left \| \nabla^{2N- \gamma} \proj_\ell g  \right\|_{2}
            \\  & \overset{\eqref{e:Bernstein inequality 2}}{\leq} C_{\xi,M}  \sum_{\gamma = 0} ^{2N} 2^{\ell \gamma }\left \|  \bfS_{\ell -4} h  \right\|_{\infty} \cdot 2^{\ell (2N-\gamma)} \left \| \proj_\ell g  \right\|_{2} = C_{\xi,M} 2^{2N \ell  }\left \|  \bfS_{\ell -4} h \right\|_{\infty} \left \| \proj_\ell g  \right\|_{2} .
        \end{split}
    \end{align*}
Thus, combining the above two estimates, we obtain that when $k \geq \ell$,
    \begin{align*}
        \|\proj_k \left( \bfS_{\ell -4} h \cdot \proj_\ell g \right)\|_2 \leq C_{\xi,M} 2^{-2N(k-\ell)} \| \bfS_{\ell -4} h \|_{\infty} \|\proj_\ell g \|_2 \leq C_{\xi,M} 2^{-\xi(k-\ell)} \| \bfS_{\ell -4} h \|_{\infty} \|\proj_\ell g \|_2.
    \end{align*}
When $k < \ell$, we directly use the estimate
    \begin{align*}
        \|\proj_k \left( \bfS_{\ell -4} h \cdot \proj_\ell g \right)\|_2 \leq \| \bfS_{\ell -4} h \cdot \proj_\ell g \|_2 \leq  \| \bfS_{\ell -4} h \|_{\infty} \|\proj_\ell  g \|_2.
    \end{align*}
This proves \Cref{lem:almost orthogonal littlewood paley}.

\end{proof}





\bibliographystyle{abbrv} 
\bibliography{references}

\end{document}